\documentclass[12pt]{amsart}
\setcounter{subsection}{-1}
\setcounter{secnumdepth}{3}
\setlength{\topmargin}{-0.4in}
\setlength{\headheight}{8pt} \setlength{\textheight}{9in}
\setlength{\oddsidemargin}{-0.15in}
\setlength{\evensidemargin}{-0.15in} \setlength{\textwidth}{6.6in}
\numberwithin{equation}{section}

\usepackage{amsmath}
\usepackage{amscd,amsthm,amssymb,amsfonts}
\usepackage{mathrsfs}
\usepackage{dsfont}
\usepackage{stmaryrd}
\usepackage{euscript}
\usepackage{expdlist}
\usepackage{enumerate}

\input xy

\newtheorem{thm}{Theorem}[section]
\newtheorem{thmA}{Theorem}

\newtheorem*{thm*}{Theorem}

\newtheorem{lm}[thm]{Lemma}
\newtheorem{cor}[thm]{Corollary}
\newtheorem*{cor*}{Corollary}
\newtheorem{prop}[thm]{Proposition}
\newtheorem*{conj*}{Conjecture}



\theoremstyle{Remark}


\theoremstyle{definition}
\newtheorem*{defn*}{Definition}
\newtheorem{Remark}[thm]{Remark}
\newtheorem{I_Remark*}{Remark}

\newcommand{\nc}{\newcommand}

\newcommand{\beq}{\begin{equation}}
\newcommand{\eeq}{\end{equation}}
\newcommand{\bpmx}{\begin{pmatrix}}
\newcommand{\epmx}{\end{pmatrix}}
\newcommand{\bbmx}{\begin{bmatrix}}
\newcommand{\ebmx}{\end{bmatrix}}




\def\parref#1{\ref{#1}}
\def\thmref#1{Theorem~\parref{#1}}

\def\propref#1{Proposition~\parref{#1}}
\def\corref#1{Corollary~\parref{#1}}

\def\lmref#1{Lemma~\parref{#1}}

\def\makeop#1{\expandafter\def\csname#1\endcsname
  {\mathop{\rm #1}\nolimits}\ignorespaces}


\makeop{Hom}   \makeop{End}   \makeop{Aut}   
\makeop{Pic} \makeop{Gal}       \makeop{Div} \makeop{Lie}
\makeop{PGL}   \makeop{Corr} \makeop{PSL} \makeop{sgn} \makeop{Spf}
 \makeop{Tr} \makeop{Nr} \makeop{Fr} \makeop{disc}
\makeop{Proj} \makeop{supp} \makeop{ker}   \makeop{Im} \makeop{dom}
\makeop{coker} \makeop{Stab} \makeop{SO} \makeop{SL} \makeop{SL}
\makeop{Cl}    \makeop{cond} \makeop{Br} \makeop{inv} \makeop{rank}
\makeop{id}    \makeop{Fil} \makeop{Frac}  \makeop{GL} \makeop{SU}
\makeop{Trd}   \makeop{Sp} \makeop{Tr}    \makeop{Trd} \makeop{Res}
\makeop{ind} \makeop{depth} \makeop{Tr} \makeop{st} \makeop{Ad}
\makeop{Int} \makeop{tr}    \makeop{Sym} \makeop{can} \makeop{SO}
\makeop{torsion} \makeop{GSp} \makeop{Tor}\makeop{Ker} \makeop{rec}
\makeop{Ind} \makeop{Coker}
 \makeop{vol} \makeop{Ext} \makeop{gr} \makeop{ad}
 \makeop{Gr}\makeop{corank} \makeop{Ann}
\makeop{Hol} 
\makeop{Fitt} \makeop{Mp} \makeop{CAP}






\def\makebb#1{\expandafter\def
  \csname bb#1\endcsname{{\mathbb{#1}}}\ignorespaces}
\def\makebf#1{\expandafter\def\csname bf#1\endcsname{{\bf
      #1}}\ignorespaces}
\def\makegr#1{\expandafter\def
  \csname gr#1\endcsname{{\mathfrak{#1}}}\ignorespaces}
\def\makescr#1{\expandafter\def
  \csname scr#1\endcsname{{\EuScript{#1}}}\ignorespaces}
\def\makecal#1{\expandafter\def\csname cal#1\endcsname{{\mathcal
      #1}}\ignorespaces}

\def\doLetters#1{#1A #1B #1C #1D #1E #1F #1G #1H #1I #1J #1K #1L #1M
                 #1N #1O #1P #1Q #1R #1S #1T #1U #1V #1W #1X #1Y #1Z}
\def\doletters#1{#1a #1b #1c #1d #1e #1f #1g #1h #1i #1j #1k #1l #1m
                 #1n #1o #1p #1q #1r #1s #1t #1u #1v #1w #1x #1y #1z}
\doLetters\makebb   \doLetters\makecal  \doLetters\makebf
\doLetters\makescr
\doletters\makebf   \doLetters\makegr   \doletters\makegr

    \def\setminus{\smallsetminus}

\normalsize

\makeop{Ram} \makeop{Rep} \makeop{mass}

\makeop{Bl}

\def\diag#1{\mathrm{diag}(#1)}
\def\el{\ell}





\def\cS{{\mathcal S}}

\def\cP{{\mathcal P}}








\def\bbL{\mathbb L}



















\def\ot{\otimes}

\def\hookto{\hookrightarrow}
\def\longto{\longrightarrow}
  \nc{\opp}{\mathrm{opp}} \nc{\ul}{\underline}




\def\XYmatrix{\xymatrix@M=8pt} 
\def\ncmd{\newcommand}
\ncmd{\xysubset}[1][r]{\ar@<-2.5pt>@{^(-}[#1]\ar@<2.5pt>@{_(-}[#1]}
\ncmd{\XYmatrixc}[1]{\vcenter{\XYmatrix{#1}}}
\ncmd{\xyto}[1][r]{\ar@{->}[#1]}
\ncmd{\xyinj}[1][r]{\ar@{^(->}[#1]}
\ncmd{\xysurj}[1][r]{\ar@{->>}[#1]}
\ncmd{\xyline}[1][r]{\ar@{-}[#1]}
\ncmd{\xydotsto}[1][r]{\ar@{.>}[#1]}
\ncmd{\xydots}[1][r]{\ar@{.}[#1]}
\ncmd{\xyleadsto}[1][r]{\ar@{~>}[#1]}
\ncmd{\xyeq}[1][r]{\ar@{=}[#1]} \ncmd{\xyequal}[1][r]{\ar@{=}[#1]}
\ncmd{\xyequals}[1][r]{\ar@{=}[#1]}
\ncmd{\xymapsto}[1][r]{l\ar@{|->}[#1]}\ncmd{\xyimplies}[1][r]{\ar@{=>}[#1]}
\ncmd{\xyiso}{\ar[r]_-{\sim}}
\def\injxy{\ar@{^(->}}


\newcommand{\pMX}[4]{\begin{pmatrix}
{#1}& {#2}\\
{#3}&{#4}\end{pmatrix} }


\newcommand{\seesaw}[4]{{#1}\ar@{-}[rd]\ar@{-}[d]&{#2}\ar@{-}[d]\\
{#3}\ar@{-}[ru]&{#4}}

















\def\x{{\times}}

\def\e{\varepsilon} 

\newcommand\stt[1]{\left\{#1\right\}}
\def\ep{\epsilon}




\renewcommand\pmod[1]{\,(\mbox{mod }{#1})}

\renewcommand\Re{\text{Re}\,}


\usepackage{color}
\usepackage{hyperref}
\usepackage{MnSymbol}
\usepackage{mathdots}
\usepackage{tikz-cd}

\def\SO{{\rm SO}}
\def\M{{\rm Mat}}

\def\a{\alpha}
\def\b{\bar}

\title{Local newforms for generic representations of $p$-adic ${\rm SO}_{2n+1}$: Uniqueness}
\author{Yao Cheng}
\date{\today}
\address{No. 151, Yingzhuan Road, Tamsui District, New Taipei City 251, Taiwan (R.O.C),  Lui-Hsien Memorial Science Hall.}
\email{briancheng@o365.tku.edu.tw}

\begin{document}
\maketitle
\begin{abstract}
The conjectural theory of local newofmrs for the split $p$-adic group $\SO_{2n+1}$, proposed by Gross, predicts that the space of 
local newforms in a generic representation is one-dimensional. In this note, we prove that this space is at most one-dimensional and 
verify its expected arithmetic properties, conditional on existence. These results play an important role in our proof of the existence 
part of the newform conjecture.
\end{abstract}

\section{Introduction}
In the 1970s, Casselman (\cite{Casselman1973}) developed the theory of local newforms for generic representations of 
${\rm GL}_2$ over a $p$-adic field $F$.  As the name suggests, local newforms are precisely the ``local components" of 
modular newforms--a theory developed by Atkin and Lehner (\cite{AtkinLehner1970}) around the same time. 
Owing to this connection, local newforms inherit significant arithmetic properties (see \cite{Schmidt2002}). For instance, the 
non-vanishing of the first Fourier coefficient of a modular newform reflects the non-vanishing of Whittaker functionals on its local 
components. Likewise, the eigenvalue of the Atkin–Lehner involution on a modular newform factors as the product of the eigenvalues 
of the corresponding local involutions.

Casselman's results were subsequently extended to generic representations of $\GL_r(F)$ by Jacquet--Piatetski-Shapiro--Shalika
(\cite{JPSS1983}, see also \cite{Jacquet2012}, \cite{Matringe2013}), and other classical groups by various authors. 
In particular, Roberts and Schmidt (\cite{RobertsSchmidt2007}) developed the theory of local newforms for generic representations of 
${\rm GSp}_4(F)$ with trivial central character. These local newforms also exhibit arithmetic properties analogous to those 
established by Casselman.

By the accidental isomorphisms ${\rm PGL}_2\simeq{\rm SO}_3$ and ${\rm PGSp}_4\simeq{\rm SO}_5$, the results of Casselman 
and Roberts-Shcmidt can be placed into a single framework. Building on these results, Gross (\cite{Gross2015}) proposed a 
conjectural theory of local newforms for generic representations of the split group ${\rm SO}_{2n+1}(F)$. 
This conjecture asserts that the space of local newforms is one-dimensional and that local newforms possess arithmetic properties.

The aim of this note is to prove the uniqueness part of this conjecture; namely, that the space of local newforms is at most 
one-dimensional, and verify its expected arithmetic properties, conditional on existence. 

\subsection{Newform conjecture}
Let $V_n$ (with $n\ge 1$) be the $(2n+1)$-dimensional quadratic space over $F$ whose discriminant and Hasse invariant
are both equal to $1$. Let $\SO(V_n)\simeq\SO_{2n+1}(F)\subset{\rm SL}_{2n+1}(F)$ denote the associated split special orthogonal 
group, and let $U_n\subset\SO_{2n+1}(F)$ be its maximal unipotent subgroup consisting of upper triangular matrices.
Let $\psi$ be an unramified additive character of $F$.  Define the non-degenerate character $\psi_{U_n}:U_n\longto\bbC^{\x}$ by 
\[
\psi_{U_n}(u)=\psi\left(u_{1,2}+\cdots+u_{n-1,n}+2^{-1}u_{n,n+1}\right)
\quad\text{for $u=(u_{i,j})\in U_n$.}
\]

Let $\pi$ be an irreducible generic representation of $\SO_{2n+1}(F)$, i.e. ${\rm Hom}_{U_n}\left(\pi,\psi_{U_n}\right)\ne 0$, 
with associated $L$-parameter $\phi_\pi$ (see \cite{JiangSoudry2003}, \cite{JiangSoudry2004}, \cite{Arthur2013}). 
The $\epsilon$-factor $\epsilon(s,\phi_\pi,\psi)$ attached to $\phi_\pi$, $\psi$ and the standard 
representation of the $L$-group of $\SO_{2n+1}(F)$ (see \cite{Tate1979}), can be expressed as
\[
\epsilon(s,\phi_\pi,\psi)
= 
\e_\pi q^{-c_\pi\left(s-\frac{1}{2}\right)},
\]
for some $\e_\pi=\pm 1$ and integer $c_\pi\ge 0$, where $q$ denotes the cardinality of the residue field of $F$.

In \cite{Gross2015} (see also \cite{Tsai2013}, \cite{Tsai2016}), Gross defined a family $\stt{K_{n,m}}_{m\ge 0}$ of open compact 
subgroups of $\SO_{2n+1}(F)$ (see \S\ref{S:K}), generalizing the families introduced by Casselman and Roberts-Schmidt to
arbitrary $n$. He also defined a family $\stt{J_{n,m}}_{m\ge 0}$ of open compact subgroups of $\SO_{2n+1}(F)$ such that 
$K_{n,0}=J_{n,0}$ and, for each $m\ge 1$, $K_{n,m}$ is a normal subgroup of $J_{n,m}$ of index $2$. In particular, the subgroup 
$J_{n,m}$ acts naturally on the subspace $\pi^{K_{n,m}}$ of $K_{n,m}$-fixed vectors of $\pi$. Here and below, we often abuse notation 
by writing $\pi$ for its underlying space.

Now, we can state the following conjecture due to Gross:
\begin{conj*}
Let $\pi$ be an irreducible generic representation of $\SO_{2n+1}(F)$. Then 
\begin{itemize}
\item[(1)] the subspaces satisfy $\pi^{K_{n,m}}=0$ for $0\le m<c_\pi$ and $\dim_\bbC\pi^{K_{n,c_\pi}}=1$;
\item[(2)] the action of $J_{n,c_\pi}/K_{n,c_\pi}$ on $\pi^{K_{n,c_\pi}}$ is given by the scalar $\e_\pi$;
\item[(3)] the natural pairing of the one-dimensional spaces 
\[
{\rm Hom}_{K_{n,c_\pi}}\left(1,\pi\right)\,\x\,{\rm Hom}_{U_n}\left(\pi,\psi_{U_n}\right)\longto\bbC,
\]
is non-degenerate.
\end{itemize}
\end{conj*}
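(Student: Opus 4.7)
The plan is to work inside the Whittaker model. Pick a non-zero $\ell\in\Hom_{U_n}(\pi,\psi_{U_n})$, which is unique up to scalar, and identify $\pi$ with $\cW(\pi,\psi_{U_n})$ via $v\mapsto W_v(g)=\ell(\pi(g)v)$. Then $\pi^{K_{n,m}}\iso\cW(\pi,\psi_{U_n})^{K_{n,m}}$, so each of the three assertions becomes a statement about right $K_{n,m}$-invariant Whittaker functions and their behavior under a local zeta integral. The analytic input I would use is a Rankin-Selberg type zeta integral for $\SO_{2n+1}$ (e.g.\ Soudry's $\SO_{2n+1}\times\GL_k$ integrals, or the doubling integrals of Piatetski-Shapiro-Rallis), together with the Jiang-Soudry local Langlands correspondence, which guarantees that these integrals realize $L(s,\phi_\pi)$ and $\e(s,\phi_\pi,\psi)$.

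The first key step is to pin down the shape of $\cW(\pi,\psi_{U_n})^{K_{n,m}}$. I would establish an Iwahori-type factorization of Gross's groups from their explicit description as stabilizers of a lattice chain in $V_n$, namely $K_{n,m}=(K_{n,m}\cap\ol{U}_n)(K_{n,m}\cap T_n)(K_{n,m}\cap U_n)$, where $\ol{U}_n$ is the opposite unipotent radical and $T_n$ the diagonal torus. Combined with the Bruhat decomposition and the non-degeneracy of $\psi_{U_n}$, this forces any $W\in\cW(\pi,\psi_{U_n})^{K_{n,m}}$ to be supported on a finite union of $T_n$-orbits in a small family of Bruhat cells; this is the analog of the Kirillov-type restrictions exploited by Jacquet-Piatetski-Shapiro-Shalika for $\GL_r$ and by Roberts-Schmidt for $\GSp_4$. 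At this stage $\cW(\pi,\psi_{U_n})^{K_{n,m}}$ is controlled by a finite list of torus values of $W$.

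Next I would feed such a $W$ into the functional equation
\[
Z(1-s,W,\wh{\Phi})=\gamma(s,\phi_\pi,\psi)\,Z(s,W,\Phi)
\]
for a suitably chosen Schwartz function $\Phi$, and expand both sides as formal Laurent series in $q^{-s}$. The level $K_{n,m}$ constrains the support, and hence the degree in $q^{-s}$, of $Z(s,W,\Phi)$, while the right hand side carries the factor $\e_\pi q^{-c_\pi(s-1/2)}$ coming from $\e(s,\phi_\pi,\psi)$. Matching the two sides forces $m\ge c_\pi$, yielding $\pi^{K_{n,m}}=0$ for $m<c_\pi$, and at $m=c_\pi$ the constraints are rigid enough to collapse the finite list of torus values to a single free parameter, proving $\dim_\bbC\pi^{K_{n,c_\pi}}\le 1$. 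For part (2), assuming a non-zero $v_0\in\pi^{K_{n,c_\pi}}$, the element $\eta$ representing the non-trivial class in $J_{n,c_\pi}/K_{n,c_\pi}$ preserves $\pi^{K_{n,c_\pi}}$ and must act by a scalar; comparing the functional equation for $W_{v_0}$ with that for $W_{\pi(\eta)v_0}$ via an Atkin-Lehner-type symmetry identifies this scalar with $\e_\pi$. For part (3), $W_{v_0}\ne 0$ because the zeta integral at $v_0$ realizes $L(s,\phi_\pi)$ up to an invertible element of $\bbC[q^s,q^{-s}]$ and so cannot vanish identically; this is exactly the non-degeneracy of the pairing.

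The main obstacle will be the first step: establishing the precise Iwahori-type structure of $K_{n,m}$ and deducing from it the Kirillov-type rigidity of $K_{n,m}$-invariant Whittaker functions. Unlike the classical paramodular case, for general $n$ the groups $K_{n,m}$ come from a flag in the standard representation of $\SO(V_n)$, and their interaction with the Bruhat cell on which the newform is supported is delicate; a careful, possibly inductive, analysis of this combinatorial-geometric input is what makes the whole argument work. Once it is in place, the functional-equation matching that concludes parts (1)--(3) is essentially formal.
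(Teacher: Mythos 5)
Your proposal does not prove the statement, and it also diverges from the paper's method in an important way that exposes the gap.

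First, the statement is the full conjecture, which asserts $\dim_\bbC\pi^{K_{n,c_\pi}}=1$ and states parts $(2)$ and $(3)$ unconditionally. Your own outline only produces $\dim_\bbC\pi^{K_{n,c_\pi}}\le 1$ (you write ``collapse \ldots to a single free parameter, proving $\dim_\bbC\pi^{K_{n,c_\pi}}\le 1$''), and your parts $(2)$ and $(3)$ are argued ``assuming a non-zero $v_0\in\pi^{K_{n,c_\pi}}$.'' Nowhere do you produce a nonzero fixed vector, so existence is simply not addressed. The paper (Theorem A) deliberately proves exactly this weaker statement --- uniqueness plus conditional arithmetic properties --- and says so in the introduction; the existence half is left open. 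So as written, your proposal neither proves the conjecture nor matches the paper's carefully scoped claim.

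Second, the step you flag as ``the main obstacle'' is where the actual mathematics lives, and your sketch of it would fail. For $\GL_r$ the Kirillov model gives you the key fact that a Whittaker function is determined by its restriction to the mirabolic, but there is no analogous statement for $\SO_{2n+1}$: there is no reason, from Iwahori factorization and the non-degeneracy of $\psi_{U_n}$ alone, that a $K_{n,m}$-fixed vector is killed whenever its Whittaker function vanishes on $T_n(F)$. The paper does not attempt a Kirillov-type support argument. Instead it first reduces to the tempered case by computing $P_{\a_r}(F)\backslash\SO_{2n+1}(F)/K^0_{n,m}$ and the resulting intersection subgroups (Propositions~\ref{P:set of rep} and~\ref{P:gp of int K^0}), combining them with $\GL_r$ newform theory via Lemma~\ref{L:reduction}. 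Then, for tempered $\pi$, the crucial injectivity --- $W_v|_{T_n(F)}\equiv 0$ and $v\in\pi^{K_{n,m}}$ imply $v=0$ --- is Lemma~\ref{L:inj}, whose proof is not formal: it uses the matrix coefficient integral of Gan--Savin, the uniqueness of Gross--Prasad periods for $(\SO_{2n+1},\SO_{2n})$ from~\cite{AGRS2010}, and the local Gross--Prasad conjecture to rule out the other constituent of $I(\tau,\tfrac12)$. This is exactly what makes the functional-equation bookkeeping with $\Xi_{n,m}$ (Proposition~\ref{P:key}) conclusive, and it is genuinely restricted to tempered representations, which is why the reduction step is not optional. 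Your proposal has neither the Gross--Prasad input nor the reduction to tempered, so the ``rigidity'' you invoke is unsupported.

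Finally, your argument for $(3)$ is circular: you assert that the zeta integral at $v_0$ realizes $L(s,\phi_\pi)$ up to a unit, but that is part of what must be proved and is only available once one knows $\Xi_{n,c_\pi}(v_0)$ is a nonzero constant, which again hinges on the injectivity lemma.
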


\begin{Remark}\noindent
\begin{itemize}
\item[(1)]
As already noted, this conjecture holds for $n=1,2$. For general $n$, the conjecture holds when $\pi$ is unramified. 
Moreover, when $\pi$ is supercuspidal, this conjecture was treated in Tsai's PhD thesis (\cite{Tsai2013}). 
\item[(2)]
Following the terminology in the literature, we call $\pi^{K_{n,c_\pi}}$ the 
space of newforms of $\pi$; on the other hand, the spaces\footnote{When $n=1$, the inclusion $K_{1,m+1}\subset K_{1,m}$ implies 
$\pi^{K_{1,m}}\subset\pi^{K_{1,m+1}}$ for every $m\ge 0$. Thus in this case, oldforms are vectors in $\pi^{K_{1,m}}$ for $m>c_\pi$
and are not new. On the other hand, when $n\ge 2$, such inclusions no longer hold.} $\pi^{K_{n,m}}$ for $m>c_\pi$ are called the 
spaces of oldforms of $\pi$. In \cite[Conjecture 9.1.10]{Tsai2013}, Tsai also proposed conjectural dimension formulas for the spaces 
of oldforms.
\end{itemize}
\end{Remark}

\subsection{Main result}
As mentioned, this note proves that the space of newforms is at most one-dimensional and establishes its expected arithmetic
properties. More concretely, we prove the following.

\begin{thmA}\label{T:main}
Let $\pi$ be an irreducible generic representation of $\SO_{2n+1}(F)$. Then 
\begin{itemize}
\item[(1)] the subspaces satisfy $\pi^{K_{n,m}}=0$ for $0\le m<c_\pi$ and $\dim_\bbC\pi^{K_{n,c_\pi}}\le 1$;
\item[(2)] if $\pi^{K_{n,c_\pi}}\ne 0$, then the action of $J_{n,c_\pi}/K_{n,c_\pi}$ on $\pi^{K_{n,c_\pi}}$ is given by the scalar $\e_\pi$;
\item[(3)] if $\pi$ is tempered and $\pi^{K_{n,c_\pi}}\ne 0$, then the natural pairing of the one-dimensional spaces 
\[
{\rm Hom}_{K_{n,c_\pi}}\left(1,\pi\right)\,\x\,{\rm Hom}_{U_n}\left(\pi,\psi_{U_n}\right)\longto\bbC,
\]
is non-degenerate.
\end{itemize}
\end{thmA}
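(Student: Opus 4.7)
The plan is to work entirely in the $\psi_{U_n}$-Whittaker model $\cW(\pi,\psi_{U_n})$ of $\pi$ and to exploit the Rankin--Selberg theory for $\SO_{2n+1}\x\GL_r$ due to Ginzburg--Piatetski-Shapiro--Rallis and Soudry. This theory attaches to a pair of Whittaker functions a zeta integral $Z(s,W,W')$ whose quotient by the local $L$-factor $L(s,\pi\x\tau)$ is a polynomial in $q^{\pm s}$ and which satisfies a local functional equation governed by $\gamma(s,\pi\x\tau,\psi)$. Since $c_\pi$ can be extracted from the conductors of these $\gamma$-factors as $\tau$ varies, these zeta integrals are the natural bridge between the $K_{n,m}$-invariance in $\pi$ and the integer $c_\pi$.

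First I would translate the fixed-vector condition: via the Iwasawa decomposition $\SO_{2n+1}(F)=U_n T K_{n,0}$ and the explicit congruence structure of $K_{n,m}\subset K_{n,0}$, a Whittaker function $W$ that is right $K_{n,m}$-invariant is determined by its restriction to the diagonal torus $T$, with support in an explicit family of $T$-cosets whose size is controlled by $m$. This converts the question about $\dim\pi^{K_{n,m}}$ into a finite-dimensional linear-algebra problem on functions on $T$.

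Next I would feed such a $W$ into $Z(s,W,W')$ against newform-type Whittaker functions $W'$ of generic representations $\tau$ of $\GL_r(F)$ for $r=1,\dots,n$. Writing the integral as a Mellin transform of $W|_T$ against a $\tau$-dependent kernel and then applying the functional equation converts a statement at level $m$ into a statement at a conductor-shifted dual level. The two formulations are incompatible when $m<c_\pi$, forcing $W=0$; at $m=c_\pi$ they leave only a one-dimensional solution space, yielding~(1). For~(2), the non-trivial coset of $J_{n,c_\pi}/K_{n,c_\pi}$ acts by an Atkin--Lehner type involution which, pushed through the functional equation, is controlled by the $\epsilon$-factor and therefore acts on $\pi^{K_{n,c_\pi}}$ by the scalar $\e_\pi$. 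For~(3), when $\pi$ is tempered the zeta integral converges on $\mathrm{Re}(s)>\tfrac12-\delta$ for some $\delta>0$, and evaluation of the normalized integral at a regular point identifies the value of the Whittaker functional on the newform with the leading coefficient of $Z(s,W_{\rm new},W')/L(s,\pi\x\tau)$, which is non-zero by construction of the test vector $W'$; this gives the non-degeneracy of the pairing.

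The main obstacle I expect is the second step, namely the precise translation of $K_{n,m}$-invariance into support and vanishing conditions on $T$, together with the matching against essential vectors on the $\GL$-side. For $\GL_n$, the mirabolic subgroup makes this translation essentially rigid (Jacquet--Piatetski-Shapiro--Shalika, Matringe); for $\SO_{2n+1}$ no such subgroup is available, and corrections coming from the longer orthogonal Weyl group must be controlled by hand. I anticipate that the cleanest route is induction on $n$, using the maximal parabolic with Levi $\GL_1\x\SO_{2n-1}$ to reduce the analysis of the relevant Jacquet modules of $\pi$ to a combination of the newform theory for $\SO_{2n-1}$ (inductive hypothesis) and the $\GL$-style linear algebra of Casselman and Matringe.
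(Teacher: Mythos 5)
Your proposal correctly identifies the Rankin--Selberg integrals for $\SO_{2n+1}\times\GL_r$ and the functional equation as the engine behind parts (1) and (2), and you correctly locate the Atkin--Lehner action of $J_{n,c_\pi}/K_{n,c_\pi}$ inside the $\epsilon$-factor. This matches the paper's tempered-case argument in spirit. However, there are two genuine gaps, and they are precisely the places where the paper introduces its two non-obvious ingredients.

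The first gap is your claim that ``a Whittaker function $W$ that is right $K_{n,m}$-invariant is determined by its restriction to the diagonal torus $T$.'' This is false as stated for $m\ge 2$: the Iwasawa decomposition $\SO_{2n+1}(F)=U_nT K_{n,0}$ uses the maximal compact, not $K_{n,m}$, so a $K_{n,m}$-invariant $W$ is only a priori determined by its values on $T$ times a set of representatives of $B_n\backslash G/K_{n,m}$, which is not reduced to $T$ itself. Consequently, the functional equation for the Rankin--Selberg polynomial only constrains a certain Mellin transform of $W|_T$, and the argument ``the two formulations are incompatible when $m<c_\pi$, forcing $W=0$'' does not close: one still needs to know that $W_v|_T=0$ forces $v=0$. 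That injectivity is the content of the paper's Lemma~\ref{L:inj}, and it is \emph{not} formal. It is proved only for tempered $\pi$, using a lemma of Gan--Savin to produce a tempered $\pi'$ of $\SO_{2n}$ with a nonvanishing matrix-coefficient period, then the multiplicity-one theorem of Aizenbud--Gourevitch--Rallis--Schiffmann to identify that period with the Rankin--Selberg integral at $s=\tfrac12$, and finally the local Gross--Prasad conjecture (Waldspurger) to dispose of the possible second summand of $I(\tau,\tfrac12)$. You flag the absence of a mirabolic subgroup as an obstacle but propose no replacement; this is the replacement.

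The second gap is structural: because the key injectivity holds only for tempered $\pi$, the paper must first reduce parts (1) and (2) to the tempered case, and it does so via the Langlands classification together with the standard module conjecture (Casselman--Shahidi, Mui\'c), double coset decompositions $\bar P_{\alpha_r}(F)\backslash\SO_{2n+1}(F)/K^0_{n,m}$ computed explicitly in Proposition~\ref{P:set of rep}, and a Mackey-type identification of $\pi^{K^0_{n,m}}$ with sums of $\tau^{\Gamma_{r,\ast}}\otimes\pi_0^{K^0_{n-r,\ast}}$ in Lemma~\ref{L:reduction}. Your suggestion of ``induction on $n$ using the $\GL_1\times\SO_{2n-1}$ parabolic'' is in a similar spirit but is too coarse: tempered representations are not parabolically induced from lower rank, so such an induction cannot reach the base case; the paper instead uses the parabolic induction to peel off the non-tempered piece, then handles the tempered core directly with the $\Xi_{r,m}$ maps. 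Without both the reduction step and the AGRS/Gross--Prasad input, the proof as outlined does not go through.
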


\begin{Remark}
In our previous paper \cite{YCheng2022}, we proved \thmref{T:main} $(2)$ under the hypothesis that $\dim_\bbC \pi^{K_{n,c_\pi}}=1$
and that the Whittaker functional is nontrivial on $\pi^{K_{n,c_\pi}}$. We also showed that, under the same hypothesis, the dimensions 
of the spaces of oldforms are greater than or equal to those predicted by Tsai in \cite[Conjecture 9.1.10]{Tsai2013}.

On the other hand, using the results of this note, one can show that if the dimensions of the spaces of oldforms predicted by Tsai
are valid for tempered representations, then they are also valid for generic representations.
\end{Remark}

\subsection{Ingredients of the proof}
The proof of \thmref{T:main} relies on the following three ingredients:
\begin{itemize}
\item double coset decompositions;
\item local Rankin-Selberg integrals (\cite{Ginzburg1990}, \cite{Soudry1993}) for $\SO_{2n+1}\x\GL_r$ with $1\le r\le n$;
\item uniqueness of local Gross-Prasad periods (\cite{AGRS2010}) for the pair $(\SO_{2n+1}, \SO_{2n})$.
\end{itemize}
Using double coset decompositions, the proof of \thmref{T:main} $(1)$ reduces to the tempered case. To prove \thmref{T:main} for 
tempered representations, we apply the maps $\Xi_{r,m}$ (with $1\le r\le n$) on $\pi^{K_{n,m}}$, constructed from the 
local Rankin-Selberg integrals for $\SO_{2n+1}\x\GL_r$ in \cite[Proposition 6.7]{YCheng2022}.  A similar proof already appears in 
\cite[\S 7.1]{YCheng2022}, under the hypothesis indicated in the previous remark. To remove the hypothesis in loc. cit., we must  
show that the maps $\Xi_{n,m}$ are injective for all $m$ when $\pi$ is tempered. To this end, we apply the third ingredient, namely, 
the uniqueness of local Gross-Prasad periods for the pair $(\SO_{2n+1}, \SO_{2n})$ (see \lmref{L:inj}).

\subsection{An outline of this note}
In \S\ref{S:SO}, we introduce the special orthogonal groups considered in this note and describe their structure.

In \S\ref{S:K}, we define the open compact subgroups introduced by Gross \cite{Gross2015} and obtain useful decompositions of 
these subgroups in \lmref{L:K decomp}.

In \S\ref{S:DCD}, we study the double coset decompositions of $\SO_{2n+1}$ with respect to its maximal parabolic subgroups and 
$K_{n,m}$. The main result here is \propref{P:set of rep}.

In \S\ref{S:gp from int}, we analyze the intersections of conjugates of $K_{n,m}$ with maximal parabolic subgroups, building on 
\propref{P:set of rep}. The main result of this section is \propref{P:gp of int K^0}, which plays a key role in the reduction to the 
tempered case.

In \S\ref{S:proof}, we prove \thmref{T:main}. More specifically, in \S\ref{SS:reduction}, we reduce the proof to the tempered
case using \lmref{L:reduction}. In \S\ref{SS:RS int}, we briefly review the local Rankin–Selberg integrals for
$\SO_{2n+1}\times\GL_r$ with $1\le r\le n$ and establish the key \lmref{L:inj}. We also recall 
\cite[Proposition 6.7]{YCheng2022} in \propref{P:key}. Finally, in \S\ref{SS:proof}, we prove the tempered case, thereby completing 
the proof.

\subsection{Notation and conventions}
Let $F$ be a finite extension of $\mathbb{Q}_p$. Denote by $\frak{o}$ the valuation ring of $F$, by $\frak{p}$ its maximal ideal, by 
$\varpi$ a uniformizer of $\frak{p}$, and by $\frak{f}=\frak{o}/\frak{p}$ the residue field of $F$, of cardinality $q$.
Let $|\cdot|_F$ be the absolute value on $F$, normalized so that $|\varpi|_F=q^{-1}$. For an integer $m\ge 0$, set 
$\frak{u}^0=\frak{o}^\times$ and $\frak{u}^m=1+\frak{p}^m$ for $m\ge 1$.
Let $\M_{m,n}(F)$ denote the space of $m\times n$ matrices with entries in $F$. For $1\le i,j\le n$, let
$E^{m,n}_{i,j}\in\M_{m,n}(F)$ be the matrix with a single nonzero entry $1$ in the $(i,j)$-th position. When $m=n$, we also write
$\M_n(F)$ for $\M_{n,n}(F)$, and $E^n_{i,j}$ for $E^{n,n}_{i,j}$. The identity element in $\M_n(F)$ is denoted by
$I_n=\sum_{i=1}^n E^n_{i,i}$. Define $J_n\in\M_n(F)$ by $J_n=\sum_{j=1}^n E^n_{1,n+1-j}$. For $a\in\GL_n(F)$, set
$a^* = J_n {}^t a^{-1} J_n$.
Fix an additive character $\psi$ of $F$ that is trivial on $\frak{o}$ but non-trivial on $\frak{p}^{-1}$.
In this note, a representation of an $\ell$-group means a smooth complex representation of finite length.

\section{Special orthogonal groups}\label{S:SO}

\subsection{Quadratic spaces}
Let $n\ge 0$ be an integer and $V_n$ be an $(2n+1)$-dimensional $F$-linear space 
equipped with a non-degenerate symmetric bilinear form $\langle\cdot,\cdot\rangle$.
We assume that $V_n$ admits an ordered basis $\gamma_n=\stt{e_{-n},\ldots, e_{-1}, e_0,  e_{1},\ldots, e_{n}}$ satisfying
\[
\langle e_0\,,\, e_0\rangle=2
 \quad\text{and}\quad
\langle e_i\,,\, e_j\rangle=\langle e_{-i}\,,\, e_{-j}\rangle=0,
\quad
\langle e_i\,,\, e_{-j}\rangle=\delta_{i,j},
\]
for $1\le i,j\le n$. 
Thus, the Gram matrix of $\langle\cdot,\cdot\rangle$ associated with $\gamma_n$ is given by
\[
S_n
=
\begin{pmatrix}
&&J_n\\
&2\\
J_n
\end{pmatrix}.
\]
\subsection{Special orthogonal groups}
The special orthogonal group $\SO(V_n)$ associated with $\left(V_n,\langle\cdot,\cdot\rangle\right)$ is defined by 
\[
\SO(V_n)=\stt{h\in{\rm SL}(V_n)\mid\text{$\langle gu\,,\,gv\rangle=\langle u\,,\,v\rangle$ for all $u,v\in V_n$}}.
\]
Using the ordered basis $\gamma_n$, it can be realized as a matrix group
\[
\SO_{2n+1}(F)
=
\stt{g\in{\rm SL}_{2n+1}(F)\mid {}^tg\,S_n\,g=S_n}.
\]
We view $\SO_{2n+1}$ as a (split) algebraic group defined over $F$. 

Let $B_n=T_nU_n\subset\SO_{2n+1}$ be the upper triangular Borel subgroup with the unipotent radical $U_n$, where 
\[
T_n
=
\stt{
t=\diag{t_1, \ldots, t_n, 1, t^{-1}_n, \ldots, t_1^{-1}}\mid t_1,\ldots, t_n\in\mathbb{G}_m
}
\]
is the diagonal torus. 

\subsection{Roots and co-roots}
The (additive) character group $X^*(T_n)={\rm Hom}(T_n, \mathbb{G}_m)$ admits a standard basis $\ep_1, \ldots, \ep_n$ defined by 
\[
\ep_i(t)=t_i\quad\text{for $1\le i\le n$.} 
\]
The root system $R_n$ of $\SO_{2n+1}$ is then given by
\[
R_n
=
\stt{\pm \ep_i\pm\ep_j\mid 1\le i< j\le n}
\cup
\stt{\pm \ep_\ell\mid 1\le \ell\le n}.
\]
The choice of $B_n$ determines subsets $\Delta_n\subset R_n^+\subset R_n$ of positive roots and simple roots:
\[
\Delta_n
=
\stt{\ep_1-\ep_2,\ldots, \ep_{n-1}-\ep_n, \ep_n}
\subset
R_n^+
=
\stt{\ep_i\pm\ep_j\mid 1\le i<j\le n}
\cup
\stt{\ep_\ell\mid 1\le \el\le n}.
\]
We denote $\a_r=\ep_1-\ep_r$ for $1\le r\le n-1$, and $\a_n=\ep_n$.

Let $\ep^*_1,\ldots, \ep_n^*$ be the basis of the co-character group $X_*(T_n)={\rm Hom}(\bbG_m, T_n)$ that is dual to 
$\ep_1,\ldots,\ep_n$ with respect to the natural pairing $(\cdot,\cdot)$ between $X^*(T_n)$ and $X_*(T_n)$. 
For an integer $1\le r\le n$, we define
\begin{equation}\label{E:lambda_r}
\lambda_r=\ep^*_1+\cdots+\ep^*_r.
\end{equation}
Given $z\in F^\x$ and $\mu\in X_*(T_n)$, we also write $z^\mu$ for $\mu(z)\in T_n(F)$.

\subsection{Root elements}
Given an integer $1\le i\le n$, we define $i^*=2n+2-i$. Note the relations
\[
i+i^*=2n+2
\quad\text{and}\quad
\left(i^*\right)^*=i. 
\]

Let $\alpha\in R_n$ be a root. Define the associated root element $x_\alpha(y)\in\SO_{2n+1}(F)$ for 
$y\in F$ by
\[
x_{\ep_i- \ep_j}(y)
=
I_{2n+1}+yE^{2n+1}_{i,j}-yE^{2n+1}_{j^*, i^*},
\]
\[
x_{\ep_i+\ep_j}(y)
=
I_{2n+1}+yE^{2n+1}_{i,j^*}-yE^{2n+1}_{j,i^*},
\]
\[
x_{\ep_\el}(y)
=
I_{2n+1}-2yE^{2n+1}_{\el,n+1}+ yE^{2n+1}_{n+1,\el^*}-y^2E^{2n+1}_{\el,\el^*},
\]
\[
x_{-\ep_\el}(y)
=
I_{2n+1}+2yE^{2n+1}_{\el^*,n+1}- yE^{2n+1}_{n+1,\el}-y^2E^{2n+1}_{\el^*,\el},
\]
and
\[
x_{-(\ep_i\pm\ep_j)}(y)={}^t x_{\ep_i\pm\ep_j}(y),
\]
for $1\le i<j\le n$, and $1\le \el\le n$. In general, we define
\[
x_\alpha(Y)=\stt{x_\alpha(y)\mid y\in Y},
\]
where $Y$ is a subset of $F$.

Note the identity
\begin{equation}\label{E:z^mu conjugate}
z^\mu\,x_{\a}(y)\, z^{-\mu}
=
x_\a\left( z^{(\a,\mu)}y\right),
\end{equation}
for $z\in F^\x$, $y\in F$, $\mu\in X_*(T_n)$ and $\a\in R_n$.

\subsection{Weyl elements}\label{SS:Weyl}
The Weyl group $W_n\subset{\rm Aut}(\bbR^n)$ of $R_n$ is isomorphic to $\frak{S}_n\ltimes\stt{\pm 1}^n$, and can be 
identified with $N_{\SO_{2n+1}(F)}(T_n(F))/T_n(F)$ as follow. Given $\alpha\in R_n$ and $y\in F^\x$, define
\begin{equation}\label{E:general Weyl elt}
w_\alpha(y)
=
x_\alpha(y)\,x_{-\alpha}\left(-y^{-1}\right)\,x_\alpha(y).
\end{equation}
Then the reflection $S_\alpha:\mathbb{R}^n\to\mathbb{R}^n$ associated with $\alpha$ is identified with $w_\alpha(1)$ 
modulo $T_n(F)$. 

For simplicity, we also denote $w_{\a}=w_{\a}(1)$ for $\a\in R_n$. On the other hand, for $1\le\el\le n$ and $k\in\bbZ$, we define 
$w_{\ep_\el, k}=\varpi^{-k\ep^*_\el}\,w_{\ep_\el}$. We have

\begin{itemize}
\item
$w_{\ep_i-\ep_j}$ is the matrix obtained from $I_{2n+1}$ by interchanging the $i$-th and $j$-th rows, as well as $i^*$-th and 
$j^*$-th rows, and then multiplying the $j$-th and $j^*$-th rows by $-1$; 
\item
$w_{\ep_i+\ep_j}$  is the matrix obtained from $I_{2n+1}$ by interchanging the $i$-th and $j^*$-th rows, as well as 
$j$-th and $i^*$-th rows, and then multiplying the $j$-th and $i^*$-th rows by $-1$;
\item
$w_{\ep_\ell}$  is the matrix obtained from $I_{2n+1}$ by interchanging the $\el$-th and $\el^*$-th rows, and then
multiplying the $\ell$-th, $(n+1)$-th and $\el^*$-th rows by $-1$;
\item
 $w_{-\alpha}={}^tw_\alpha$ for $\alpha\in R_n^+$.
\end{itemize}

Note the relation
\begin{equation}\label{E:w conjugate}
w_\alpha\,x_\beta(y)\,w_\alpha^{-1}=x_{S_\alpha(\beta)}(y),
\end{equation}
for $\alpha,\beta\in R_n$. 

\subsection{Parabolic subgroups}\label{SS:para}
Let $S\subset\Delta_n$ be a subset (possibly empty). Define a subset $R_S$ of $R_n$ by
\[
R_S
=
\stt{\sum_{\a\in\Delta_n}n_\a \a\in R_n\mid \text{$n_\a\in\mathbb{Z}$ with $n_\a\ge 0$ when $\a\in S$}}.
\]
Let $P_S=M_SN_S\supset B_n$ be the parabolic subgroup of $\SO_{2n+1}$ defined by 
\[
P_S(F)
=
\langle
T_n(F),\, x_\a(F)\mid \a\in R_S
\rangle
\]
where $N_S$ is the unipotent radical of $P_S$ and $M_S:=N_{P_S}(T_n)$ is a Levi subgroup of $P_S$. Note that $B_n=P_{\Delta_n}$ 
and $\SO_{2n+1}=P_\emptyset$ according to our definition.

For simplicity, we write $P_\a=P_{\stt{\a}}$ for $\a\in\Delta_n$. These are maximal parabolic subgroups of $\SO_{2n+1}$. Note that
\[
M_{\ep_1-\ep_r}(F)
=
\stt{
\diag{a,g_0,a^*}\mid
a\in\GL_r(F),\,g_0\in\SO_{2(n-r)+1}(F)
}
\cong
\GL_r(F)\,\x\,\SO_{2(n-r)+1}(F)
\]
for $1\le r\le n-1$, and 
\[
M_{\ep_n}(F)
=
\stt{\diag{a,a^*}\mid a\in\GL_n(F)}\cong \GL_n(F).
\]
These parabolic subgroups play an important role in this note.

\subsection{Subgroups and embeddings}
Let $1\le r\le n$ be an integer and 
\[
\SO_{2r}(F)
=
\stt{h\in{\rm SL}_{2r}(F)\mid {}^t\,h\, J_{2r} h=J_{2r}}
\]
be an even special orthogonal group of rank $r$. We consider $\SO_{2r}$ as a (split) algebraic group defined over $F$, and
identify it as a subgroup of $\SO_{2n+1}(F)$ via the embedding 
\begin{equation}\label{E:embedding}
\SO_{2r}(F)\ni\pMX{a}{b}{c}{d}\longmapsto
\begin{pmatrix}
a&&b\\
&I_{2(n-r)+1}\\
c&&d
\end{pmatrix}\in \SO_{2n+1}(F),
\end{equation}
where $a,b,c,d\in{\rm Mat}_r(F)$. In other words, $\SO_{2r}(F)$ is isomorphic to the subgroup of $\SO(V_n)$ fixing the vectors 
$e_{-n+r},\ldots, e_{-1}, e_0, e_1,\ldots, e_{n-r}$.

The Weyl group of the root system of $\SO_{2n}$ is isomorphic to $\frak{S}_n\ltimes\stt{\pm 1}^{n-1}$, can be identified with 
$N_{\SO_{2n}(F)}(T_n(F))/T_n(F)$, and satisfies
\begin{equation}\label{E:Weyl gp quo}
\left(N_{\SO_{2n+1}(F)}(T_n(F))/T_n(F)\right)
/\left(N_{\SO_{2n}(F)}(T_n(F))/T_n(F)\right)
=
\stt{T_n(F),\,w_{\ep_j}\,T_n(F)},
\end{equation}
for any $1\le j\le n$.

\section{Open compact subgroups}\label{S:K}
In this section, we introduce the open compact subgroups $K_{n,m}$ of $\SO_{2n+1}(F)$ defined in \cite{Gross2015}, and examine their 
properties. We provide two descriptions, each offering distinct advantages for understanding these subgroups.

Let $m\ge 0$ be an integer and $\bbL_{n,m}\subset V_n$ be an $\frak{o}$-lattice defined by
\[
\bbL_{n,m}
=
\frak{o}e_{-n}\oplus\cdots\oplus\frak{o}e_{-1}\oplus\frak{p}^m e_0\oplus\frak{p}^m e_1\oplus\cdots\oplus\frak{p}^m e_n.
\]
Let $J_{n,m}\subset \SO_{2n+1}(F)$ be the open compact subgroup that stabilizes the lattice $\bbL_{n,m}$. Then 
$K_{n,0}:=J_{n,0}$ is the hyperspecial maximal compact subgroup of $\SO_{2n+1}(F)$. For $m\ge 1$, $K_{n,m}$ is a normal 
subgroup of $J_{n,m}$ of index $2$, defined as follows.

\subsection{First description}
Suppose that $m\ge 1$. The group $J_{n,m}$ is isomorphic to the group of $\frak{o}$ points of a group scheme over $\frak{o}$. 
More precisely, if we equip $\bbL_{n,m}$ with the symmetry bilinear form 
$\langle\cdot,\cdot\rangle_m:=\varpi^{-m}\langle\cdot,\cdot\rangle$, 
then the Gram matrix of $(\bbL_{n,m}, \langle\cdot,\cdot\rangle_m)$ associated to the ordered basis 
$\stt{e_{-n},\ldots, e_{-1}, \varpi^m e_0, \varpi^m e_1,\ldots, \varpi^m e_n}$
is 
\[
S_{n,m}
=
\begin{pmatrix}
&&J_n\\
&2\varpi^m&\\
J_n&&
\end{pmatrix},
\]
and 
\[
\tilde{J}_{n,m}:=
\stt{g=(g_{ij})\in{\rm SL}_{2n+1}(\frak{o})\mid
\text{$^tg\,S_{n,m}\,g=S_{n,m}$ and $g_{j,n+1}\in\frak{p}^m$ for $1\leq j\neq n+1\leq 2n+1$}}
\]
is the group of $\frak{o}$ point of a group scheme $\tilde{\bf{J}}_{n,m}$ over $\frak{o}$, i.e. 
$\tilde{J}_{n,m}=\tilde{\bf{J}}_{n,m}(\frak{o})$ (see \cite[Theorem 3.6]{Shahabi2018}). Now we have 
\[
J_{n,m}=t_m^{-1}\,\tilde{J}_{n,m}\,t_{m}
\]
where $t_m:=\diag{\varpi^mI_n,1,I_n}$ with conjugation taken in $\GL_{2n+1}(F)$.

The reduction modulo $\frak{p}$ map gives rise to a surjective homomorphism
\[
\tilde{J}_{n,m}=\tilde{\bf{J}}_{n,m}(\frak{o})
\relbar\joinrel\twoheadrightarrow
{\rm S}({\rm O}_{2n}(\frak{f})\x{\rm O}_1(\frak{f}))
\relbar\joinrel\twoheadrightarrow
{\rm O}_{2n}(\frak{f})
\overset{{\rm det}}{\relbar\joinrel\twoheadrightarrow}
\stt{\pm 1}.
\]
Then $K_{n,m}\subset J_{n,m}$ is defined to be the index $2$ normal subgroup such that 
$t_m K_{n,m} t_m^{-1}\subset \tilde{J}_{n,m}$ is the kernel of the above homomorphism. 

\subsection{Second description}
The reference for this subsection is Shahabi's PhD thesis \cite{Shahabi2018}. Suppose again that $m\ge 1$. 
Then $J_{n,m}$ is given by 
\[
J_{n,m} 
=
\bordermatrix{
              & n     & 1    & n     \cr
    n     & \frak{o} &\frak{o} &\frak{p}^{-m}     \cr
    1     &\frak{p}^m& \frak{o} &\frak{o}\cr
    n     &\frak{p}^m&\frak{p}^m& \frak{o}   \cr
            }\cap \SO_{2n+1}(F).
\]
Now define 
\[
K_{n,m}=\stt{k=(k_{i,j})\in J_{n,m}\mid k_{n+1,n+1}\in 1+\frak{p}}.
\]

Evidently, the second description is more straightforward. However, it does not make it immediately clear that $K_{n,m}$ is a subgroup of 
$J_{n,m}$ with index $2$. Note that 
\begin{equation}\label{E:AT elt}
w_{\ep_j,m}
\in
J_{n,m}\setminus K_{n,m}
\end{equation}
for any $1\le j\le n$. Following convention in the literature, the elements $w_{\ep_1,m}$ are called the $Atkin$-$Lehner$ elements.


\subsection{Decompositions of $K_{n,m}$}
Recall that $\SO_{2r}(F)\subset\SO_{2n+1}(F)$ via \eqref{E:embedding} for $r\le n$. Define 
\begin{equation}\label{E:H}
H_{r,m}=\SO_{2r}(F)\cap K_{n,m}.
\end{equation}
Then $H_{r,0}$ and $H_{r,1}$ are two non-conjugate hyperspecial maximal compact subgroup of $\SO_{2r}(F)$, and we have
\begin{equation}\label{E:H_r,e}
H_{r,e}
=
\langle T_r(\frak{o}),\,x_{\pm(\ep_i-\ep_j)}(\frak{o}), x_{\ep_i+\ep_j}(\frak{p}^{-e}),\,x_{-(\ep_i+\ep_j)}(\frak{p}^e)\mid 1\le i<j\le r\rangle
\end{equation}
for $e=0,1$, where $T_r(\frak{o})=T_n(F)\cap H_{r,0}$. For arbitrary $m$, the following relation holds:
\begin{equation}\label{E:H_r,m}
H_{r,m}=\varpi^{-\lfloor\frac{m}{2}\rfloor\lambda_r}\,H_{r,e}\,\varpi^{\lfloor\frac{m}{2}\rfloor\lambda_r}
\end{equation}
where $\lambda_r\in X_*(T_n)$ is given by \eqref{E:lambda_r}, and $e=0,1$ satisfies $m\equiv e\pmod{2}$.

We now state and prove a lemma due to Tsai (cf.\,\cite[Proposition 7.1.3]{Tsai2013}). For convenience, we introduce the elements 
\begin{equation}\label{E:w_r,m}
w_{r,m}
=
\prod_{j=1}^r w_{\ep_j, m}\in J_{n,m}
\end{equation}
for $1\le r\le n$ and $m\ge 0$.

\begin{lm}\label{L:K decomp}
Suppose that $m\ge 1$. Then 
\begin{align*}
K_{n,m}
&=
\prod_{j=1}^n x_{\ep_j}(\frak{o})\,\prod_{j=1}^n x_{-\ep_j}(\frak{p}^m)\, H_{n,m}\\
&=
\prod_{j=1}^n x_{-\ep_j}(\frak{p}^m)\,\prod_{j=1}^n x_{\ep_j}(\frak{o})\, H_{n,m}.
\end{align*}
\end{lm}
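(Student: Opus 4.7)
The plan is to establish the first decomposition via a column-elimination procedure on the middle column of $k \in K_{n,m}$; the second decomposition then follows by swapping the order of the two elimination phases.

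The reverse inclusion is straightforward: the matrix description of $K_{n,m}$ (second description) together with the explicit formulas for $x_{\ep_j}(y)$ and $x_{-\ep_j}(y)$ show directly that $x_{\ep_j}(\frak{o}) \subset K_{n,m}$ and $x_{-\ep_j}(\frak{p}^m) \subset K_{n,m}$, while $H_{n,m} \subset K_{n,m}$ by definition. So the containment $\prod x_{\ep_j}(\frak{o}) \prod x_{-\ep_j}(\frak{p}^m) H_{n,m} \subset K_{n,m}$ is clear.

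For the forward inclusion, given $k \in K_{n,m}$ I would write its middle column as $(q_1,\ldots,q_n,e,v_1,\ldots,v_n)^T$ with $q_i \in \frak{o}$, $e \in 1+\frak{p}$, and $v_j \in \frak{p}^m$. A direct computation shows that left-multiplication by $x_{\ep_\el}(y)$ alters only rows $\el$ and $n+1$ of $k$; on the middle column it sends $q_\el \mapsto q_\el - 2ye - y^2 v_{n+1-\el}$ and $e \mapsto e + y v_{n+1-\el}$, leaving every other entry fixed. The equation $q_\el - 2ye - y^2 v_{n+1-\el} = 0$ in $y$ is quadratic with leading coefficient in $\frak{p}^m$ and linear coefficient $-2e \in \frak{o}^\times$, so it has a unique solution $y_\el \in \frak{o}$ by a standard contraction-mapping (or Hensel) argument applied to $y \mapsto (q_\el - y^2 v_{n+1-\el})/(2e)$. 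Analogously, left-multiplication by $x_{-\ep_\el}(z)$ alters only rows $\el^*$ and $n+1$, and one finds a unique $z_\el \in \frak{p}^m$ making the $\el^*$-th entry of the middle column vanish. Crucially, $x_{\ep_\el}$ never touches row $\el^*$ and $x_{-\ep_\el}$ never touches row $\el$, so I can first apply all the $x_{\ep_\el}(-y_\el)$-multiplications on the left (in any order over $\el$) to kill the $q_\el$'s, and then apply the $x_{-\ep_\el}(-z_\el)$-multiplications to kill the $v_{n+1-\el}$'s.

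After these $2n$ left-multiplications, the middle column of the resulting matrix $k'$ reads $(0,\ldots,0,e',0,\ldots,0)^T$. The identity $\langle k'e_0, k'e_0\rangle = 2$ forces $e'^2 = 1$, and $e' \in 1+\frak{p}$ then yields $e' = 1$. Thus $k'$ fixes $e_0$; since $k' \in \SO_{2n+1}(F)$ it also preserves $e_0^\perp$, which forces its middle row to be $e_0^T$, so $k' \in \SO_{2n}(F) \cap K_{n,m} = H_{n,m}$. Rearranging yields $k = \prod_{\el=1}^n x_{\ep_\el}(y_\el) \cdot \prod_{\el=1}^n x_{-\ep_\el}(z_\el) \cdot k'$, proving the first decomposition. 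The second decomposition follows by the same argument with the two elimination phases swapped, using the same disjointness of rows. The main obstacle is the careful row bookkeeping together with the contraction-mapping/Hensel step for solving the quadratic for $y_\el$ (and analogously for $z_\el$); non-commutativity of the $x_{\pm \ep_j}$'s is not a real issue here, since each application affects a single, distinct coordinate of the middle column, so the eliminations decouple.
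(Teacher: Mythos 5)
Your overall strategy is the same as the paper's: eliminate the entries of the middle column of $k$ (equivalently, of $ke_0$) by left multiplication with the root elements $x_{\ep_\el}$ and $x_{-\ep_\el}$, solve a quadratic by Hensel's lemma at each step, and then identify the stabilizer of $e_0$ inside $K_{n,m}$ with $H_{n,m}$. However, there is a genuine gap: your argument implicitly assumes the residue characteristic is odd, while the paper makes no such restriction (and its bookkeeping of factors of $2$ is precisely designed to cover $p=2$). Concretely, (i) you claim the quadratic $q_\el-2ye-y^2v_{n+1-\el}=0$ has linear coefficient $-2e\in\mathfrak{o}^\times$; when $2\in\mathfrak{p}$ this is false, and the equation need not have a root in $\mathfrak{o}$ unless one first knows $q_\el\in 2\mathfrak{o}$ and $v_j\in 2\mathfrak{p}^m$ (similarly, your ``unique $z_\el\in\mathfrak{p}^m$'' step divides $v_{n+1-\el}$ by $2e$). (ii) At the end you deduce $e'=1$ from $e'^2=1$ and $e'\in 1+\mathfrak{p}$; when $p=2$ one has $-1\in 1+\mathfrak{p}$, so this implication fails. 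The paper closes both holes by using the \emph{first} (lattice-stabilizer) description: pairing $ke_0$ against $k^{-1}(\varpi^m e_j)$ and $k^{-1}e_{-j}$, which lie in $\bbL_{n,m}$, shows the entries of the middle column above the center lie in $2\mathfrak{o}$ and those below in $2\mathfrak{p}^m$; after dividing by $2$ the quadratic has unit linear coefficient $a_0\in 1+\mathfrak{p}$, and the central entry is tracked in $1+2\mathfrak{p}$, where $c_n^2=1$ does force $c_n=1$ in every residue characteristic. Your proof, which uses only the congruence (second) description of $K_{n,m}$, has no access to these divisibilities, so as written it only proves the lemma for $p\neq 2$.

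Two smaller points. First, the eliminations do not fully decouple: each application of $x_{\ep_\el}$ changes the central entry $e$ as well as the $\el$-th entry, so the equations for the successive $y_\el$ must be solved sequentially with the updated central entry (as in the paper), not ``in any order'' against the original $e$; this is easily repaired since the update lies in $\mathfrak{p}^m$ and the central entry stays a unit. Second, for the second displayed decomposition the paper simply conjugates the first one by $w_{n,m}$, which normalizes $K_{n,m}$ and $\SO_{2n}(F)$ and swaps $x_{\ep_j}(\mathfrak{o})$ with $x_{-\ep_j}(\mathfrak{p}^m)$; your plan of redoing the elimination with the phases swapped can be made to work, but it reproduces the same $p=2$ difficulty and is more labor than the conjugation trick.
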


\begin{proof}
It suffices to verify the first identity. Indeed, suppose we have 
\begin{equation}\label{E:K decomp}
K_{n,m}=\prod_{j=1}^n x_{\ep_j}(\frak{o})\,\prod_{j=1}^n x_{-\ep_j}(\frak{p}^m)\, H_{n,m}.
\end{equation}
Then since $w_{n,m}$ normalizes both $K_{n,m}$ and $\SO_{2n}(F)$ and
\[
w_{n,m}\, x_{\ep_j}(\frak{o})\, w_{n,m}^{-1}
=
x_{-\ep_j}(\frak{p}^m)
\quad\text{and}\quad
w_{n,m}\, x_{-\ep_j}(\frak{p}^m)\, w_{n,m}^{-1}
=
x_{\ep_j}(\frak{o}^m)
\]
by \eqref{E:z^mu conjugate}, it follows that $w_{n,m}$ normalizes $H_{n,m}$, and 
\begin{align*}
K_{n,m}
=w_{n,m}\, K_{n,m}\, w_{n,m}^{-1}
&=w_{n,m}\, \prod_{j=1}^n x_{\ep_j}(\frak{o})\,\prod_{j=1}^n x_{-\ep_j}(\frak{p}^m)\, H_{n,m}\, w_{n,m}^{-1}\\
&=\prod_{j=1}^n x_{-\ep_j}(\frak{p}^m)\,\prod_{j=1}^n x_{\ep_j}(\frak{o})\, H_{n,m}.
\end{align*}

We now turn to establishing the identity \eqref{E:K decomp}. It is clear from the second description of $K_{n,m}$ that the right-hand 
side of \eqref{E:K decomp} is contained in $K_{n,m}$. To prove the reverse containment, we show that for any $k\in K_{n,m}$, there 
exist $y_1,\ldots, y_n\in\frak{o}$ and $z_1,\ldots, z_n\in\frak{p}^m$ such that 
\begin{equation}\label{E:fix e_0}
\prod_{j=1}^n x_{-\ep_{n+1-j}}(-z_{n+1-j})\,\prod_{j=1}^n x_{\ep_{n+1-j}}(-y_{n+1-j})\, ke_0=e_0.
\end{equation}
Then since 
\[
H_{n,m}=\stt{h\in K_{n,m}\mid h e_0=e_0},
\]
the proof follows.

To verify \eqref{E:fix e_0}, let
\[
ke_0=\sum_{j=-n}^{n} a_j e_j
\quad
\text{for some $a_j\in\frak{o}$ for $-n\le j\le n$.}
\]
Note that $a_0\in 1+\frak{p}$ by the definition of $K_{n,m}$. Since $k^{-1}(\varpi^m e_j)$ and $k^{-1}e_{-j}$ are both contained in 
$\bbL_{n,m}$, it follows that
\[
\varpi^m a_{-j}=\varpi^m\langle ke_0\,,\, e_j\rangle
=
\langle e_0\,,\, k^{-1}(\varpi^m e_{j})\rangle
\in
 2\frak{p}^m
\]
for $1\le j\le n$. A similar argument shows
\[
a_{j}
=
\langle ke_0\,,\, e_{-j}\rangle
=
\langle e_0\,,\,k^{-1}e_{-j}\rangle
\in
2\frak{p}^{m}.
\]
Consequently, we can write $a_{-j}=2b_{-j}$ and $a_{j}=2\varpi^m b_{j}$ for some $b_{\pm j}\in\frak{o}$.

To proceed, let $y\in F$ be a variable. We have 
\begin{align*}
x_{\ep_1}(-y)ke_0
=
2(b_{-n}+a_0y-\varpi^mb_1y^2)e_{-n}
+
\sum_{j=-n+1}^{-1} 2b_j e_j
+
c(y)e_0
+
\sum_{j=1}^n 2\varpi^m b_je_j,
\end{align*}
where $c(y):=a_0-2\varpi^mb_1y$. Since $a_0\in 1+\frak{p}$, Hensel's lemma implies that there exists $y_1\in\frak{o}$ such that 
\[
b_{-n}+a_0y_1-\varpi^mb_1y_1^2=0.
\]
Note that $c_1:=c(y_1)=a_0-2\varpi^m b_1y_1\in 1+\frak{p}$. Continue this process, we can find $y_2,\ldots, y_n\in\frak{o}$ such that 
\[
\prod_{j=1}^n x_{\ep_{n+1-j}}(-y_{n+1-j})\,ke_0
=
c_ne_0+\sum_{j=1}^n 2\varpi^m b_j e_j,
\]
where $c_n:=a_0+2\varpi^m\sum_{j=1}^nb_jy_j\in 1+2\frak{p}$. 

Similarly, if $z_j=\varpi^m b_j\in\frak{p}^m$ for $1\le j\le n$, then a direct computation shows that
\[
\prod_{j=1}^n x_{-\ep_{n+1-j}}(-z_{n+1-j})\,\prod_{j=1}^n x_{\ep_{n+1-j}}(-y_{n+1-j})\,ke_0= c_ne_0.
\]
Set
\[
h=\prod_{j=1}^n x_{-\ep_{n+1-j}}(-z_{n+1-j})\,\prod_{j=1}^n x_{\ep_{n+1-j}}(-y_{n+1-j})\,k\in K_{n,m}.
\]
Then $he_0=c_ne_0$ and hence
\[
2c_n^2=\langle he_0\,,\,he_0\rangle=\langle e_0\,,\,e_0\rangle=2,
\]
which implies $c_n^2=1$. Since $c_n\in 1+2\frak{p}$, we have $c_n=1$, and therefore $h\in H_{n,m}$, as desired. 
\end{proof}

\begin{Remark}\label{R:n+1 column}\noindent
\begin{itemize}
\item[(1)]
From the proof of \lmref{L:K decomp}, we deduce that if $k=(k_{i,j})\in K_{n,m}$ with $m\ge 1$, then $k_{i,n+1}\in 2\,\frak{o}$ for
$1\le i\le n$, $k_{i,n+1}\in 2\,\frak{p}^m$ for $n+2\le i\le 2n$ and $k_{n+1,n+1}\in 1+2\,\frak{p}^m$.
\item[(2)]
Together with \eqref{E:H_r,e} and \eqref{E:H_r,m}, \lmref{L:K decomp} provides useful information about $K_{n,m}$.
\end{itemize}
\end{Remark}

\subsection{Another family of open compact subgroups}
Sometimes, it is more convenient to consider the open compact subgroups defined by 
\begin{equation}\label{E:K^0}
J^0_{n,m}=\varpi^{\lfloor\frac{m}{2}\rfloor\lambda_n}\, J_{n,m}\,\varpi^{-\lfloor\frac{m}{2}\rfloor\lambda_n}
\quad\text{and}\quad
K^0_{n,m}=\varpi^{\lfloor\frac{m}{2}\rfloor\lambda_n}\, K_{n,m}\,\varpi^{-\lfloor\frac{m}{2}\rfloor\lambda_n}.
\end{equation}
These open compact subgroups have the following properties:
\begin{equation}\label{E:property of K^0}
K^0_{n,i}=K_{n,i}\,(i=0,1),\quad H_{n,e}\subset K^0_{n,m},
 \quad\text{and}\quad
 w_{\ep_j,e}\in J^0_{n,m}\setminus K^0_{n,m}\,(m\ge 1), 
 \end{equation}
where $e=0,1$ satisfies $m\equiv e\pmod{2}$. Moreover, we have

\begin{cor}\label{C:K^0 decomp}
Let $m\ge 1$ and write $m=e+2\el$ for some $e=0,1$ and $\el\ge 0$. Then 
\begin{align*}
K^0_{n,m}
&=
\prod_{j=1}^n x_{\ep_j}\left(\frak{p}^\el\right)\,\prod_{j=1}^n x_{-\ep_j}\left(\frak{p}^{e+\el}\right)\, H_{n,e}\\
&=
\prod_{j=1}^n x_{-\ep_j}\left(\frak{p}^{e+\el}\right)\,\prod_{j=1}^n x_{\ep_j}\left(\frak{p}^\el\right)\, H_{n,e}.
\end{align*}
In particular, we have
\begin{equation}\label{E:descending}
\quad K^0_{n,m}\supset K^0_{n,m+2},
\end{equation}
for every $m\ge 0$.
\end{cor}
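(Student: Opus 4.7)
The plan is to derive the corollary directly from Lemma~\ref{L:K decomp} by conjugating the two decompositions of $K_{n,m}$ by $\varpi^{\ell\lambda_n}$, where $\ell=\lfloor m/2\rfloor$ so that $m=e+2\ell$. By the definition \eqref{E:K^0},
\[
K^0_{n,m}=\varpi^{\ell\lambda_n}\,K_{n,m}\,\varpi^{-\ell\lambda_n},
\]
so the task reduces to computing the conjugate of each factor appearing in Lemma~\ref{L:K decomp}.

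For the root subgroups, I use the pairing $(\ep_j,\lambda_n)=1$ (from the duality of $\ep_i,\ep_j^*$) together with the identity \eqref{E:z^mu conjugate} to obtain
\[
\varpi^{\ell\lambda_n}\,x_{\ep_j}(\frak{o})\,\varpi^{-\ell\lambda_n}=x_{\ep_j}(\frak{p}^\ell),
\qquad
\varpi^{\ell\lambda_n}\,x_{-\ep_j}(\frak{p}^m)\,\varpi^{-\ell\lambda_n}=x_{-\ep_j}(\frak{p}^{m-\ell})=x_{-\ep_j}(\frak{p}^{e+\ell}).
\]
For the $H_{n,m}$ factor, the relation \eqref{E:H_r,m} with $r=n$ gives $\varpi^{\ell\lambda_n}H_{n,m}\varpi^{-\ell\lambda_n}=H_{n,e}$. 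Substituting these into the two decompositions supplied by Lemma~\ref{L:K decomp} yields the two displayed formulas, since conjugation by a group element transports a product decomposition to the corresponding conjugated one.

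For the descending property \eqref{E:descending}, I handle $m\ge 1$ by comparing the explicit decompositions for $m$ and $m+2$: the residue $e$ is unchanged while $\ell$ is replaced by $\ell+1$, so every root-subgroup factor appearing in $K^0_{n,m+2}$ is contained in the corresponding factor of $K^0_{n,m}$, and the $H_{n,e}$ factor is common. Since $K^0_{n,m}$ is a group, any product of such factors still lies in $K^0_{n,m}$, whence $K^0_{n,m+2}\subset K^0_{n,m}$. The remaining case $m=0$ is immediate from \eqref{E:property of K^0}: $K^0_{n,0}=K_{n,0}$ is the hyperspecial maximal compact subgroup and plainly contains every factor $x_{\pm\ep_j}(\frak{p})$ and $H_{n,0}$ appearing in the decomposition of $K^0_{n,2}$. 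The computations are purely mechanical; if any point requires attention it is the bookkeeping $m=e+2\ell$ and the careful invocation of \eqref{E:H_r,m}, rather than any genuine obstacle.
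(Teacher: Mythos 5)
Your proof is correct and follows the same route as the paper: conjugate both identities of Lemma~\ref{L:K decomp} by $\varpi^{\lfloor m/2\rfloor\lambda_n}$, using \eqref{E:z^mu conjugate} with $(\ep_j,\lambda_n)=1$ for the root subgroup factors and \eqref{E:H_r,m} for $H_{n,m}$, exactly as the paper's one-line proof indicates. The supplementary handling of the $m=0$ case in the descending property is a natural inclusion argument that the paper leaves implicit.
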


\begin{proof}
This follows from the definition of $K^0_{n,m}$, the identity \eqref{E:H_r,m} and \lmref{E:K decomp}.
\end{proof}

\section{Double coset decompositions}\label{S:DCD}
The aim of this section is to obtain explicit double coset representatives  $\b{\Sigma}_{r,m}$ for
\begin{equation}\label{E:double coset}
\b{P}_{\a_r}(F)\backslash\SO_{2n+1}(F)/K^0_{n,m}
\end{equation}
for $1\le r\le n$ and $m\ge 0$. Here, $\b{P}_{\a_r}(F)=M_{\a_r}(F)\b{N}_{\a_r}(F)$ denotes the opposite of $P_{\a_r}(F)$. 
We work with $K^0_{n,m}$ rather than $K_{n,m}$ because of the descending property \eqref{E:descending} of $K^0_{n,m}$.

We begin with a simple lemma. To this end, let $\b{B}_n=T_n\b{U}_n$ be the Borel subgroup of $\SO_{2n+1}$ that is opposite to 
$B_n$, where $\b{U}_n$ is the unipotent radical of $\b{B}_n$.

\begin{lm}\label{L:Iwasawa decomp}
Let $e=0,1$. Then we have the decomposition 
\[
\SO_{2n+1}(F)
=
B_n(F)\,J_{n,e}
=
\b{B}_n(F)\,J_{n,e}.
\]
\end{lm}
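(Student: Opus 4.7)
The plan is to handle $e=0$ via the classical Iwasawa decomposition for the hyperspecial maximal compact subgroup $J_{n,0}=\SO_{2n+1}(\frako)$; both identities in the lemma then follow from standard results for split reductive groups over $p$-adic fields.

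For $e=1$, I would reduce to the $e=0$ case. Since $\SO_{2n+1}(F)=B_n(F)\cdot J_{n,0}$, it would suffice to prove the inclusion $J_{n,0}\subset B_n(F)\cdot J_{n,1}$. To this end, I would apply the Bruhat decomposition in the finite reductive quotient $\SO_{2n+1}(\frakf)$ and lift it through the reduction map $J_{n,0}\twoheadrightarrow\SO_{2n+1}(\frakf)$: every $g\in J_{n,0}$ then admits a factorization $g=b_1\tilde{w}b_2$ with $b_1,b_2\in B_n(\frako)$ and $\tilde{w}\in J_{n,0}$ representing some $w\in W_n$. A direct inspection of the second description of $J_{n,1}$ in \S\ref{S:K} yields $B_n(\frako)\subset J_{n,1}$: the sub-diagonal entries of upper-triangular integral matrices vanish (hence lie in $\frakp$), while the remaining entries lie in $\frako\subset\frakp^{-1}$. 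Thus the factor $b_2$ absorbs into $J_{n,1}$, and the task reduces to showing $\tilde{w}\in B_n(F)\cdot J_{n,1}$ for every $w\in W_n$.

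For each $w\in W_n$ I would exhibit an explicit representative inside $J_{n,1}$. The simple reflections $s_{\al_i}=s_{\ep_i-\ep_{i+1}}$ for $1\le i\le n-1$ are represented by the matrices $w_{\al_i}$ whose nonzero entries lie in $\{0,\pm 1\}$ and whose support avoids the middle row and column; inspection against the block conditions of $J_{n,1}$ then gives $w_{\al_i}\in J_{n,1}$. The remaining simple reflection $s_{\al_n}=s_{\ep_n}$ is represented by the Atkin-Lehner element $w_{\ep_n,1}\in J_{n,1}$ from equation \eqref{E:AT elt}. Taking products furnishes a $J_{n,1}$-representative $\tilde{w}'$ for every $w\in W_n$, and since any two representatives of the same Weyl element differ by a torus element $t\in T_n(F)\subset B_n(F)$, the desired inclusion $\tilde{w}\in T_n(F)\cdot J_{n,1}\subset B_n(F)\cdot J_{n,1}$ follows.

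The identity with $\b{B}_n(F)$ would then follow by conjugating through the product of Atkin-Lehner elements $w_{n,1}\in J_{n,1}$ from \eqref{E:w_r,m}: since $w_{n,1}$ represents the longest Weyl element $w_0\in W_n$, one has $w_{n,1}\,B_n(F)\,w_{n,1}^{-1}=\b{B}_n(F)$, and hence $\b{B}_n(F)\cdot J_{n,1}=w_{n,1}\cdot B_n(F)\cdot J_{n,1}=w_{n,1}\cdot\SO_{2n+1}(F)=\SO_{2n+1}(F)$. The main obstacle will be the verification of the integrality assertions $B_n(\frako)\subset J_{n,1}$, $w_{\al_i}\in J_{n,1}$ for $i<n$, and $w_{\ep_n,1}\in J_{n,1}$; each reduces to a routine entry-by-entry check against the explicit block-triangular conditions defining $J_{n,1}$.
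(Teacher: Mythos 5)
Your approach is essentially the same as the paper's: handle $e=0$ by the classical Iwasawa decomposition, and handle $e=1$ by lifting a Bruhat decomposition of $\SO_{2n+1}(\frakf)$ through the reduction map $J_{n,0}\twoheadrightarrow\SO_{2n+1}(\frakf)$, then conjugating by the Atkin--Lehner element to get the $\bar{B}_n$ version. The one genuine difference is that you lift the full Bruhat decomposition, which forces you to exhibit a representative in $J_{n,1}$ for every $w\in W_n$; the paper instead observes that $J_{n,0}\cap J_{n,1}$ reduces modulo $\frakp$ to the maximal parabolic $P_{\a_n}(\frakf)$, so it suffices to lift the decomposition into $U_n(\frakf)$-$P_{\a_n}(\frakf)$ double cosets, and only the $2^n$ representatives in $W^0_n\simeq\{\pm 1\}^n$ need be handled. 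This is a harmless inefficiency on your part, not a different method.

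Two smaller points worth tightening. First, the lifted factorization is not quite $g=b_1\tilde{w}b_2$: lifting Bruhat from $\SO_{2n+1}(\frakf)$ produces $g=b_1\tilde{w}b_2 k$ with $k$ in the kernel of reduction. That kernel consists of matrices $\equiv I_{2n+1}\pmod{\frakp}$ and hence lies in $J_{n,1}$, so it is absorbed by the same mechanism that absorbs $b_2$; still, the factor should be accounted for. Second, for the reflection $s_{\ep_n}$ the argument only requires that \emph{some} representative lie in $J_{n,1}$ (any two differ by a $T_n(F)$-element, which lands in $B_n(F)$, as you note); the element $\varpi^{-\ep_n^*}w_{\ep_n}$ does the job, and is the one that enters the paper's claim that $W_n^0\subset T_n(F)\,J_{n,1}$. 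Finally, the last chain of equalities $\bar{B}_n(F)\,J_{n,1}=w_{n,1}\,B_n(F)\,J_{n,1}$ should be written as $\bar{B}_n(F)\,J_{n,1}=w_{n,1}B_n(F)w_{n,1}^{-1}J_{n,1}=w_{n,1}B_n(F)J_{n,1}w_{n,1}^{-1}$, making explicit the fact that $w_{n,1}$ normalizes $J_{n,1}$, which is what the step really uses.
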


\begin{proof}
Since
\[
w_{n,e}\,B_n(F)\,w_{n,e}^{-1}=\b{B}_n(F), 
\]
it suffices to establish the first equality. 

If $e=0$, then $J_{n,0}$ is the hyperspecial maximal compact subgroup of $\SO_{2n+1}(F)$, and the assertion follows from the 
standard Iwasawa decomposition.

To handle the case $e=1$, note that the the images of $J_{n,0}\cap J_{n,1}$ and $J_{n,0}$, under the reduction modulo 
$\frak{p}$, are $P_{\a_n}(\frak{f})$ and $\SO_{2n+1}(\frak{f})$, respectively. By lifting the Bruhat decomposition of 
$\SO_{2n+1}(\frak{f})$ to $J_{n,0}$, we obtain the decomposition
\[
J_{n,0}=\bigcup_{w\in W_n^0} \left(U_n(F)\cap J_{n,0}\right)\,w\,\left(J_{n,0}\cap J_{n,1}\right).
\]
Here $W_n^0\simeq\stt{\pm 1}^n$ is the subgroup of $J_{n,0}$ generated by the elements $w_{\ep_\el}$ for $1\le\el\le n$. 

Since $W^0_n$ is contained in $T_n(F)\,J_{n,1}$, it follows that
\begin{align*}
\SO_{2n+1}(F)
=
B_n(F)\,J_{n,0}
=
\bigcup_{w\in W^0_n} B_n(F)\,w\,\left(J_{n,0}\cap J_{n,1}\right)
\subseteq 
B_n(F)\,J_{n,1}
\subseteq
\SO_{n+1}(F),
\end{align*}
which shows $\SO_{2n+1}(F)=B_n(F)\,J_{n,1}$.
\end{proof}

Now we can state and prove the main result of this section.

\begin{prop}\label{P:set of rep}
Let $e=0,1$ such that $m\equiv e\pmod 2$. Then sets $\b{\Sigma}_{r,m}$ of representatives of 
\[
\b{P}_{\a_r}(F)\backslash\SO_{2n+1}(F)/K^0_{n,m}
\]
can be chosen as follows\footnote{When $r=n$ and $m=e=0$, we understand that $\b{\Sigma}_{n,0}=\stt{x_{\ep_n}(1)}$.}:
\[
\b{\Sigma}_{r,m}
=
\begin{cases}
\stt{x_{\ep_r}(\varpi^i)\mid 0\le i\le \lfloor\frac{m}{2}\rfloor}\quad&\text{if $1\le r\le n-1$},\\
\stt{x_{\ep_n}(\varpi^i),\,x_{\ep_n}(\varpi^j)w_{\ep_n,e}\mid 
0\le i \le \lfloor\frac{m}{2}\rfloor,\,1-e\le j\le \lfloor\frac{m}{2}\rfloor}\quad&\text{if $r=n$}.
\end{cases}
\] 
\end{prop}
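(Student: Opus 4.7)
The goal is to establish both exhaustion (every element of $\SO_{2n+1}(F)$ lies in one of the listed double cosets) and distinctness (the listed elements lie in pairwise distinct double cosets).

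The first step is a reduction via the Iwasawa decomposition from \lmref{L:Iwasawa decomp}. Since $\b{B}_n(F) \subset \b{P}_{\a_r}(F)$, this gives $\SO_{2n+1}(F) = \b{P}_{\a_r}(F) \cdot J_{n,e}$. A direct entrywise inspection using \corref{C:K^0 decomp} together with the matrix description of $J_{n,e}$ shows $K^0_{n,m} \subset J_{n,e}$, so the problem reduces to classifying $(\b{P}_{\a_r}(F) \cap J_{n,e}) \backslash J_{n,e} / K^0_{n,m}$. It is convenient to reinterpret this geometrically: via the map $g \mapsto g^{-1} W_0$ with $W_0 = \operatorname{span}(e_{n-r+1}, \ldots, e_n)$, the quotient $\b{P}_{\a_r}(F) \backslash \SO_{2n+1}(F)$ is identified with the variety $X_r(F)$ of isotropic $r$-subspaces of $V_n$ (Lagrangians when $r = n$), and the problem becomes the enumeration of $K^0_{n,m}$-orbits on $X_r(F)$.

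In the case $1 \le r \le n-1$, I would introduce a lattice invariant $i(W) \in \Z_{\ge 0}$ measuring the displacement of an isotropic subspace $W$ from $W_0$ in the positive root direction $\ep_r$, verify $i\bigl( x_{\ep_r}(\varpi^i)^{-1} W_0 \bigr) = i$, and show $K^0_{n,m}$-invariance via \corref{C:K^0 decomp}; the cutoff $\lfloor m/2 \rfloor$ corresponds exactly to the containment $x_{\ep_r}(\frakp^{\lfloor m/2 \rfloor}) \subset K^0_{n,m}$. This proves distinctness. For exhaustion, given $g \in J_{n,e}$, I would successively multiply by $K^0_{n,m}$ on the right and by $\b{P}_{\a_r}(F) \cap J_{n,e}$ (notably $M_{\a_r}(F)$) on the left, using the decomposition from \corref{C:K^0 decomp}, in order to eliminate all root directions except $\ep_r$ and scale its surviving coordinate into the range $\{1, \varpi, \ldots, \varpi^{\lfloor m/2 \rfloor}\}$.

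For $r = n$ the argument proceeds analogously, with an additional ingredient supplied by \eqref{E:Weyl gp quo}: the Weyl quotient $W(\SO_{2n+1})/W(\SO_{2n})$ has order $2$, represented by $w_{\ep_n}$. This refinement is responsible for the second family $\{x_{\ep_n}(\varpi^j) w_{\ep_n, e}\}$, corresponding to passing through the nontrivial coset. The asymmetric lower bound $1 - e \le j$ reflects a boundary collapse: when $e = 0$, the element $x_{\ep_n}(1) w_{\ep_n, 0}$ is shown by explicit manipulation with root elements to be equivalent to a member of the first family, whereas when $e = 1$ the Atkin-Lehner element $w_{\ep_n, 1} \in J_{n,1} \setminus K_{n,1}$ from \eqref{E:AT elt} yields a genuine new representative at $j = 0$. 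The main obstacle lies precisely here: verifying the non-overlap of the two families across all parameters, and correctly matching the boundary of the range $1 - e \le j \le \lfloor m/2 \rfloor$, requires careful bookkeeping with Atkin-Lehner elements, the property \eqref{E:property of K^0}, and the precise action of $K^0_{n,m}$ on Lagrangians.
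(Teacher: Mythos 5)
Your proposed route --- identifying $\b{P}_{\a_r}(F)\backslash\SO_{2n+1}(F)$ with the variety of isotropic $r$-subspaces and classifying $K^0_{n,m}$-orbits via a lattice invariant --- is different in spirit from the paper's, which works throughout with explicit matrix manipulations. For $m\ge 2$ the paper proceeds in three steps: (1) using \lmref{L:K decomp} (for $e=1$), the Iwahori--Matsumoto/Bruhat--Tits decomposition of $K_{n,0}$ together with \eqref{E:Weyl gp quo} (for $e=0$), and Levi Weyl-group conjugation, to reduce to representatives of the form $\prod_i x_{\ep_i}(\varpi^{c_i})$ (possibly followed by $w_{\ep_1,e}$); (2) applying the Roberts--Schmidt identity \eqref{E:key id} iteratively to collapse these products to a single $x_{\ep_1}(\varpi^{c_1})$; (3) separating the resulting double cosets by computing $\langle x_{\ep_1}(\varpi^c)e_0\,,\,e_i\rangle$ in two ways against the entry constraints of \remref{R:n+1 column} and \eqref{E:K^0}.

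Your sketch has a genuine gap precisely where the paper's work is concentrated. The lattice invariant $i(W)$ driving your distinctness argument is asserted, not defined; you give no proof of $K^0_{n,m}$-invariance and no computation of its value on $x_{\ep_r}(\varpi^i)^{-1}W_0$, whereas the paper's Step 3 actually carries this out by pairing translates of $e_0$ against basis vectors. Your exhaustion step (``eliminate all root directions except $\ep_r$'') is a description of what the paper's Steps 1--2 accomplish rather than a proof of it: you name no replacement for \eqref{E:key id}, which is the specific mechanism that lets one discard $x_{\ep_k}(\varpi^{c_k})$ factors modulo $\b{P}_{\a_r}(F)$ and $K^0_{n,m}$, and it is far from obvious that multiplying by $M_{\a_r}(F)$ and absorbing into $K^0_{n,m}$ alone suffices. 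For $r=n$ you correctly diagnose the role of the index-two quotient $J^0_{n,m}/K^0_{n,m}$ and the boundary collapse at $e=0$ (cf.~\eqref{E:case r=n, e=0}), but you explicitly flag the non-overlap of the two families as ``the main obstacle'' and then leave it unaddressed; the paper settles it, for instance, by deducing $w_{\ep_1}\notin K^0_{n,2}$ from $\b{P}_{\a_n}(F)\cap J^0_{n,2}\subset K^0_{n,2}$. The structural landmarks you flag are the right ones, but the argument is missing at the points where the paper's Steps 2 and 3 do the real work.
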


\begin{proof}
Since $\b{P}_{\a_r}(F)\supset\b{B}_n(F)$ for every $1\le r\le n$, we obtain from \lmref{L:Iwasawa decomp} that 
\[
\SO_{2n+1}(F)=\b{P}_{\a_r}(F)\,J_{n,e},
\]
for $1\le r\le n$, and $e=0,1$. In particular, the first assertion follows from this decomposition and the fact that 
$K^0_{n,0}=K_{n,0}=J_{n,0}$, i.e. the set $\b{\Sigma}_{r,0}$ may consist of an arbitrary element of $\SO_{2n+1}(F)$. We choose 
$\b{\Sigma}_{r,0}=\stt{x_{\ep_r}(1)}$ so that \propref{P:gp of int K^0} in \S\ref{S:gp from int} can be stated uniformly.

For $m\ge 1$, we actually prove that $\b{\Sigma}'_{r,m}$ forms a set of representatives of the double coset \eqref{E:double coset}, so 
as to unify the proof, where 
\[
\b{\Sigma}'_{r,m}
=
\begin{cases}
\stt{x_{\ep_1}(\varpi^i)\mid 0\le i\le \lfloor\frac{m}{2}\rfloor}\quad&\text{if $1\le r\le n-1$},\\
\stt{x_{\ep_1}(\varpi^i),\,x_{\ep_1}(\varpi^j)w_{\ep_1,e}\mid 
0\le i \le \lfloor\frac{m}{2}\rfloor,\,1-e\le j\le \lfloor\frac{m}{2}\rfloor}\quad&\text{if $r=n$}.
\end{cases}
\] 
Then since 
\[
\b{P}_{\a_r}(F)x_{\ep_1}\left(\varpi^i\right)K^0_{n,m}
=
\b{P}_{\a_r}(F)x_{\ep_r}\left(\varpi^i\right)K^0_{n,m},
\]
for $1\le r\le n$ and 
\[
\b{P}_{\a_n}(F)x_{\ep_1}\left(\varpi^j\right)w_{\ep_1,e}K^0_{n,m}
=
\b{P}_{\a_n}(F)x_{\ep_n}\left(\varpi^j\right)w_{\ep_n,e}K^0_{n,m},
\]
the proof follows.

To verify the case $m=1$, note that 
\begin{equation}\label{E:SO/K_n,1 decomp}
\SO_{2n+1}(F)
=
\b{P}_{\a_r}(F)\,J_{n,1}
=
\b{P}_{\a_r}\,K_{n,1}\cup\b{P}_{\a_r}\,w_{\ep_n,1}\,K_{n,1}
\quad\text{for $1\le r\le n$.}
\end{equation}
Since $w_{\ep_n,1}\in\b{P}_{\a_r}(F)$ for $r<n$, we obtain
$\SO_{2n+1}(F)=\b{P}_{\a_r}(F)\,K_{n,1}$, which establishes the case for $r<n$. On the other hand, if $r=n$, and if we have
$w_{\ep_n,1}=hk$ for some $h\in\b{P}_{\a_n}(F)$ and $k\in K_{n,1}$, then 
\[
h
=
w_{\ep_n,1}\,k^{-1}\in\b{P}_{\a_n}(F)\cap J_{n,1}\subset K_{n,1},
\]
which implies
\[
w_{\ep_n,1}\in K_{n,1},
\]
leading to a contradiction. Therefore, \eqref{E:SO/K_n,1 decomp} is a disjoint union when $r=n$. Since 
\[
\b{P}_{\a_n}(F)w_{\ep_n,1}\, K_{n,1}
=
\b{P}_{\a_n}(F)\,w_{\ep_1,1}\,K_{n,1},
\]
the assertion for $m=1$ is verified.

We proceed to justify the case $m\ge 2$. Write 
\[
m=e+2\el,
\]
where $\el=\lfloor\frac{m}{2}\rfloor\ge 1$. Since the proof for $m\ge 2$ is quite lengthy, we divide it into 
three steps for clarity.

\subsection*{Step 1}
We first claim that the set $\b{\Sigma}'_{r,m}$ may be chosen from one of the following subsets, depending on $r$:
\begin{itemize}
\item If $1\le r<n$, then
\begin{equation}\label{E:1st rep r<n}
\stt{\prod_{i=1}^rx_{\ep_i}(\varpi^{c_i})\mid 0\le c_1\le\cdots\le c_r\le\ell}.
\end{equation}
\item If $r=n$, then
\begin{equation}\label{E:1st rep r=n}
\stt{\prod_{i=1}^n x_{\ep_i}(\varpi^{c_i}),\quad\left(\prod_{i=1}^n x_{\ep_i}(\varpi^{d_i})\right)w_{\ep_1,e}
\mid 0\le c_1\le\cdots\le c_r\le\ell,\quad 0\le d_1\le\cdots\le d_r\le\ell}.
\end{equation}
\end{itemize}

Assume that $e=1$. Then the previous arguments show that
\[
\SO_{2n+1}(F)=\b{P}_{\a_r}(F)\,K_{n,1}
\quad\text{or}\quad
\SO_{2n+1}(F)=\b{P}_{\a_n}(F)K_{n,1}\bigsqcup\b{P}_{\a_n}(F)\,w_{\ep_1,1}\,K_{n,1},
\]
according to $1\le r<n$ or $r=n$, respectively. Since $w_{\ep_1,1}$ normalizes $K_{n,1}$, we have
\[
\b{P}_{\a_r}(F)\backslash\SO_{2n+1}(F)/K^0_{n,m}
=
\left(\b{P}_{\a_r}(F)\cap K_{n,1}\right)\backslash K_{n,1}/K^0_{n,m} 
\]
for $1\le r< n$, and
\begin{align*}
\b{P}_{\a_n}(F)\backslash\SO_{2n+1}(F)/K^0_{n,m}
=&
\left(\b{P}_{\a_n}(F)\cap K_{n,1}\right)\backslash K_{n,1}/K^0_{n,m}\\
&\bigsqcup
\left(\b{P}_{\a_n}(F)\cap K_{n,1}\right)\backslash w_{\ep_1,1}\,K_{n,1}/K^0_{n,m}
\end{align*}
for $r=n$. On the other hand,  \lmref{L:K decomp}, \eqref{E:H_r,m} and \eqref{E:K^0} imply
\[
K_{n,1}
=
\left(\b{P}_{\a_r}(F)\cap K_{n,1}\right)\,\prod_{j=1}^r x_{\ep_j}(\cP_\el)\,K^0_{n,m}
\]
for $1\le r\le n$, where $\cP_\el$ denotes a set of representatives of $\frak{o}/\frak{p}^\el$. 
Furthermore, since $w_{\ep_1,1}$ normalizes $\SO_{2n}(F)$, and hence normalizes $H_{n,1}$, the same results imply
\[
w_{\ep_1,1}\,K_{n,1}
=
K_{n,1}\,w_{\ep_1,1}
=
\left(\b{P}_{\a_n}(F)\cap K_{n,1}\right)\,\prod_{j=1}^n x_{\ep_j}(\cP_\el)w_{\ep_1,1}\,K^0_{n,m}
\]
for $r=n$. 

Since we are allowing conjugation of the elements in $T_n(\frak{o})$ and the Weyl elements $w_{\ep_i-\ep_j}$ for 
$1\le i<j\le r$ (when $r\ge 2$), the claim for the case $r<n$ and $e=1$ follows. For $r=n$, in addition to conjugating the aforementioned 
elements, we may also need to multiply on the right by the elements of the form 
\[
w_{\ep_j,1}w_{\ep_1,1}\in H_{n,1},
\] 
with $2\le j\le n$, in order to obtain the desired result. In this way, the first claim for $e=1$ follows.

Suppose that $e=0$. By \lmref{L:Iwasawa decomp} and the fact that $J_{n,0}=K_{n,0}$, we have
\[
\b{P}_{\a_r}(F)\backslash\SO_{2n+1}(F)/K^0_{n,m}
=
\left(\b{P}_{\a_r}(F)\cap K_{n,0}\right)\backslash K_{n,0}/K^0_{n,m}.
\]
In this case, we do not have a decomposition for $K_{n,0}$ similar to that for $K_{n,1}$. Instead, we apply the 
following decomposition:
\[
K_{n,0}
=
\b{U}_n(\frak{o})\,U_n(\frak{o})\,\left(N_{\SO_{2n+1}(F)}(T_n(F))\cap K_{n,0}\right),
\]
by \cite[Proposition 6.4.9 (iii)]{BruhatTits1972} (see also \cite[Proposition 7.3.12 (1)]{KalethaPrasad2023}). 
At this point, note that $\stt{I_{2n+1}, w_{\ep_i}}$ forms a set of representatives of 
\[
\left(N_{\SO_{2n+1}(F)}(T_n(F))\cap K_{n,0}\right)/\left(N_{\SO_{2n+1}(F)}(T_n(F))\cap H_{n,0}\right)
\]
for any $1\le i\le n$ by \eqref{E:Weyl gp quo}. Since $w_{\ep_i}$ normalizes $K^0_{n,m}$ by \eqref{E:K^0}, and we have
\[
U_n(\frak{o})K^0_{n,m}=\prod_{j=1}^n x_{\ep_j}(\frak{o})\,K^0_{n,m},
\]
it follows that
\begin{align*}
K_{n,0}
&=
\b{U}_n(\frak{o})\,U_n(\frak{o})\,K^0_{n,m}
\,\bigcup\,
\b{U}_n(\frak{o})\,U_n(\frak{o})\,K^0_{n,m}\,w_{\ep_i}\\
&=
\b{U}_n(\frak{o})\,\prod_{j=1}^n x_{\ep_j}(\frak{o})\,K_{n,m}^0
\,\bigcup\,
\b{U}_n(\frak{o})\,\prod_{j=1}^n x_{\ep_j}(\frak{o})\,w_{\ep_i}\,K_{n,m}^0.
\end{align*}

At this point, assume first that $1\le r<n$ and take $i=n$. Then since
\[
\b{U}_n(\frak{o})\,\prod_{j=r+1}^n x_{\ep_j}(\frak{o})
\subset \b{P}_{\a_r}(F),
\quad
w_{\ep_n}\in\b{P}_{\a_r}(F)
\quad\text{and}\quad
\prod_{j=1}^r x_{\ep_j}(\frak{o})\,w_{\ep_i}=w_{\ep_n}\,\prod_{j=1}^r x_{\ep_j}(\frak{o}),
\]
we conclude from \lmref{L:K decomp} (for $m\ge 2$) that
\[
K_{n,0}
=
\left(\b{P}_{\a_r}(F)\cap K^0_{n,m}\right)\,\prod_{j=1}^r x_{\ep_j}\left(\cP_\el\right)\,K^0_{n,m}.
\]
Now, we are in the situation similar to the case $e=1$. 

On the other hand, when $r=n$, we choose $i=1$. Then since 
\[
\b{U}_n(\frak{o})\subset \b{P}_{\a_n}(F)
\quad\text{and}\quad
w_{\ep_1}\,K^0_{n,m}=K^0_{n,m}\,w_{\ep_1},
\]
we can apply the same lemma to obtain
\begin{align*}
K_{n,0}
=&
\left(\b{P}_{\a_r}(F)\cap K^0_{n,m}\right)\,\prod_{j=1}^n x_{\ep_j}\left(\cP_\el\right)\,K^0_{n,m}\\
&\bigcup
\left(\b{P}_{\a_r}(F)\cap K^0_{n,m}\right)\,\prod_{j=1}^n x_{\ep_j}\left(\cP_\el\right)\,w_{\ep_1}\,K^0_{n,m}.
\end{align*}
Again, we are in a situation analogous to the case $e=1$; hence, the same argument applies here, establishing the first claim.

\subsection*{Step 2}
Our next claim is that $\b{\Sigma}'_{r,m}$ contains a set of representatives for the desired double coset of $\SO_{2n+1}(F)$.
Note that when $r=1$, the assertion has already been established in the first step, so we may assume that $2\le r\le n$. 
To proceed, we need the following identity borrowed from \cite[(5.16)]{RobertsSchmidt2007}:
\begin{equation}\label{E:key id}
x_{\ep_{k-1}}\left(\varpi^i\right)x_{\ep_k}\left(\varpi^j\right)
=
x_{-\ep_{k-1}+\ep_k}\left(\varpi^{i+j}\right)x_{\ep_{k-1}}\left(\varpi^i\right)
x_{-\ep_{k-1}+\ep_k}\left(-\varpi^{j-i}\right)x_{\ep_{k-1}+\ep_k}\left(\varpi^{i+j}\right),
\end{equation}
for $i,j\in\bbZ$ and $2\le k\le n$.

To verify the second claim, we first show that if 
\[
g=\prod_{i=1}^r x_{\ep_i}\left(\varpi^{d_i}\right)
\] 
is contained in the sets given by \eqref{E:1st rep r<n} or \eqref{E:1st rep r=n} according to $r<n$ or $r=n$, respectively,  then 
\[
\b{P}_{\a_r}(F)\,g\,K^0_{n,m}
=
\b{P}_{\a_r}(F)\,s\,K^0_{n,m}
\]
for some $s\in\b{\Sigma}'_{r,m}$. In fact, we will show that $s=x_{\ep_1}(\varpi^{d_1})$. 

To begin with, write 
\[
g=g_1 x_{\ep_{r-1}}\left(\varpi^{d_{r-1}}\right) x_{\ep_r}\left(\varpi^{d_r}\right),
\]
where $g_1=\prod_{i=1}^{r-2}x_{\ep_i}\left(\varpi^{d_i}\right)$ with $g_1:=I_{2n+1}$ when $r=2$. Now, since 
\[
g_1\,x_{-\ep_{r-1}+\ep_r}(y)
=
x_{-\ep_{r-1}+\ep_r}(y)\,g_1,
\quad
x_{-\ep_{r-1}+\ep_r}(y)\in\b{P}_{\a_r}(F),
\]
and
\[
x_{-\ep_{r-1}+\ep_r}\left(-\varpi^{d}\right)x_{\ep_{r-1}+\ep_r}\left(\varpi^{d'}\right)\in K^0_{n,m},
\]
for any $y\in F$ and non-negative integers $d, d'$, we can apply \eqref{E:key id} (with $k=r$, $i=d_{r-1}$ and $j=d_r$) to derive
\begin{align*}
\b{P}_{\a_r}(F)\,g\,K^0_{n,m}
=
\b{P}_{\a_r}(F)\,g_1\,x_{\ep_{r-1}}(\varpi^{d_{r-1}})\,x_{\ep_r}\left(\varpi^{d_r}\right)\,K^0_{n,m}
=
\b{P}_{\a_r}(F)\,g_1\,x_{\ep_{r-1}}(\varpi^{d_{r-1}})\,K^0_{n,m}.
\end{align*}
Continue this process, we finally obtain
\[
\b{P}_{\a_r}(F)\,g\,K^0_{n,m}
=
\b{P}_{\a_r}(F)\,x_{\ep_1}(\varpi^{d_1})\,K^0_{n,m},
\]
as desired. 

To finish the second step, it remains to verify that if $r=n$ and 
$g=\left(\prod_{j=1}^n x_{\ep_j}\left(\varpi^{d_j}\right)\right)w_{\ep_1,e}$ is contained 
in \eqref{E:1st rep r=n}, then 
\[
\b{P}_{\a_r}(F)\,g\,K^0_{n,m}
=
\b{P}_{\a_r}(F)\,s\,K^0_{n,m}
\]
for some $s\in\b{\Sigma}'_{n,m}$. Indeed, since $w_{\ep_1,e}$ normalizes $K_{n,m}^0$, the previous argument implies
\begin{align*}
\b{P}_{\a_n}(F)\,g\,K^0_{n,m}
&=
\b{P}_{\a_n}(F)\,\left(\prod_{j=1}^n x_{\ep_j}\left(\varpi^{d_j}\right)\right)\,K^0_{n,m}\,w_{\ep_1,e}\\
&=
\b{P}_{\a_n}(F)\, x_{\ep_1}\left(\varpi^{d_1}\right)\,K^0_{n,m}\,w_{\ep_1,e}\\
&=
\b{P}_{\a_n}(F)\, x_{\ep_1}\left(\varpi^{d_1}\right)w_{\ep_1,e}\,K^0_{n,m}.
\end{align*}
Finally, if $e=0$, then the identity
\[
x_{\ep_1}(1)=x_{-\ep_1}(1)x_{\ep_1}(-1)w_{\ep_1},
\]
and the fact that $x_{-\ep_1}(1)\in\b{P}_{\a_n}(F)$ imply 
\begin{equation}\label{E:case r=n, e=0}
\b{P}_{\a_n}(F)\,x_{\ep_1}(1)\,K^0_{n,m}
=
\b{P}_{\a_n}(F)\,x_{\ep_1}(-1)w_{\ep_1}\,K^0_{n,m}
=
\b{P}_{\a_n}(F)\,x_{\ep_1}(1)w_{\ep_1}\,K^0_{n,m}.
\end{equation}
This establishes the second step.

\subsection*{Step 3}
The last step is to show that elements in $\b{\Sigma}'_{r,m}$ represent distinct double cosets.
Suppose in contrary that
\[
x_{\ep_1}(\varpi^c)=p\,x_{\ep_1}(\varpi^d)\,k
\]
for some integers $0\le d< c\le \el$, $p\in \b{P}_{\a_r}(F)$ and $k\in K^0_{n,m}$. Then, since
\[
p=x_{\ep_1}(\varpi^c)\,k^{-1}\,x_{\ep_1}(-\varpi^d)\in\b{P}_{\a_r}(F)\cap K^0_{n,m},
\]
we have
\[
p^{-1}e_i
=
\sum_{j=n-r+1}^n p_{i,j}e_j
\]
for some $p_{i,j}\in\frak{o}$ with $n-r+1\le i,j\le n$. On the other hand, write
\[
k e_0
=
\sum_{t=-n}^{n} a_t e_{t}. 
\]
By Remark \ref{R:n+1 column} and \eqref{E:K^0}, we have 
\[
a_{-t}\in 2\frak{p}^\el,\quad a_0\in 1+2\frak{p}^m\quad\text{and}\quad a_t\in 2\frak{p}^{e+\el}
\quad\text{for $1\le t\le n$}. 
\]
Now, a direct computation gives
\[
x_{\ep_1}(\varpi^d)ke_0 
=
\left(a_{-n}-2\varpi^da_0-\varpi^{2d}a_n\right)e_{-n}
+
(a_0+\varpi^d a_n)e_0
+
\sum_{-n+1\le t\ne 0\le n}a_t e_t.
\]
It is important to observe that 
\[
a_{-n}-2\varpi^da_0-\varpi^{2d}a_n\in 2\varpi^d\frak{o}^\x.
\]

We proceed to compute 
\[
\langle x_{\ep_1}(\varpi^c)e_0, e_i\rangle
\]
for $n-r+1\le i\le n$ in two different ways. First, we have
\[
\langle x_{\ep_1}(\varpi^c)e_0, e_i\rangle
=
\langle e_0-2\varpi^c e_{-n}, e_i\rangle
=
-2\varpi^c\delta_{i,n}.
\]
Next, we also have
\[
\langle x_{\ep_1}(\varpi^c)e_0, e_i\rangle
=
\langle x_{\ep_1}(\varpi^d)ke_0, p^{-1}e_i\rangle
=
\left(a_{-n}-2\varpi^da_0-\varpi^{2d}a_n\right)p_{i,n}
+
\sum_{j=n-r+1}^{n-1}a_jp_{i,j}.
\]
It follows that 
\begin{equation}\label{E:key id 2}
\left(a_{-n}-2\varpi^da_0-\varpi^{2d}a_n\right)p_{i,n}
+
\sum_{j=n-r+1}^{n-1}a_jp_{i,j}
=
-2\varpi^c\delta_{i,n},
\end{equation}
for $n-r+1\le i\le n$.

By applying \eqref{E:key id 2} for $n-r+1\le i\le n-1$, and noting that $d<\el$, we deduce that $p_{i,n}\in\frak{p}$ for $n-r+1\le i\le n-1$.
This implies $p_{n,n}\in\frak{o}^\x$ since $p^{-1}\in\b{P}_{\a_r}(F)\cap K^0_{n,m}$. Applying the same identity for $i=n$, we obtain
\[
a_{-n}-2\varpi^da_0-\varpi^{2d}a_n
\in
2\frak{p}^c,
\]
which contradicts to the fact that $a_{-n}-2\varpi^da_0-\varpi^{2d}a_n\in 2\varpi^d\frak{o}^\x$ and $d<c$. We thus complete the proof 
for the case $1\le r<n$.

Suppose from now on that $r=n$. If 
\[
\b{P}_{\a_n}(F)\,x_{\ep_1}\left(\varpi^c\right)w_{\ep_1,e}\,K^0_{n,m}
=
\b{P}_{\a_n}(F)\,x_{\ep_1}\left(\varpi^d\right)w_{\ep_1,e}\,K^0_{n,m}
\]
for some $1-e\le c,d\le \el$, then since $w_{\ep_1,e}$ normalizes $K^0_{n,m}$, we have
\[
\b{P}_{\a_n}(F)\,x_{\ep_1}\left(\varpi^c\right)\,K^0_{n,m}\,w_{\ep_1,e}
=
\b{P}_{\a_n}(F)\,x_{\ep_1}\left(\varpi^d\right)\,K^0_{n,m}\,w_{\ep_1,e},
\]
which implies $c=d$.

We still need to show that 
\[
\b{P}_{\a_n}(F)\,x_{\ep_1}\left(\varpi^i\right)\,K^0_{n,m}
\neq 
\b{P}_{\a_n}(F)\,x_{\ep_1}\left(\varpi^j\right)w_{\ep_1,e}\,K^0_{n,m}
\]
for $0\le i\le \el$ and $1-e\le j\le \el$, except for $e=i=j=0$. When $e=1$, this follows from the facts that 
\[
\b{P}_{\a_n}(F)\,x_{\ep_1}\left(\varpi^i\right)\,K^0_{n,m}\subset\b{P}_{\a_n}(F)K_{n,1},
\]
and
\[
\b{P}_{\a_n}(F)\,x_{\ep_1}\left(\varpi^j\right)w_{\ep_1,e}\,K^0_{n,m}\subset\b{P}_{\a_n}(F)\,w_{\ep_1,1}\,K_{n,1},
\]
together with the disjointness of $\b{P}_{\a_n}(F)K_{n,1}$ and $\b{P}_{\a_n}(F)\,w_{\ep_1,1}\,K_{n,1}$.

Suppose that $e=0$ and that 
\[
\b{P}_{\a_n}(F)\,x_{\ep_1}\left(\varpi^i\right)\,K^0_{n,m}
=
\b{P}_{\a_n}(F)\,x_{\ep_1}\left(\varpi^j\right)w_{\ep_1}\,K^0_{n,m}
\]
for $0\le i\le \el$ and $1\le j\le \el$. Since $w_{\ep_1}$ normalizes $K^0_{n,m}$, we may assume without loss of generality that 
$i\le j$.  If $i=0$, then the identity \eqref{E:case r=n, e=0} implies 
\[
\b{P}_{\a_n}(F)\,x_{\ep_1}\left(1\right)w_{\ep_1}\,K^0_{n,m}
=
\b{P}_{\a_n}(F)\,x_{\ep_1}\left(\varpi^j\right)w_{\ep_1}\,K^0_{n,m},
\]
which in turn implies that $j=0$. Therefore, we have $1\le i\le j$, and 
\[
x_{\ep_1}\left(\varpi^i\right)
=
p\,x_{\ep_1}\left(\varpi^j\right)w_{\ep_1}\,k
\]
for some $p\in\b{P}_{\a_n}(F)$ and $k\in K^0_{n,m}$. Since $i,j\ge 1$, both $x_{\ep_1}\left(\varpi^i\right)$ and 
$x_{\ep_1}\left(\varpi^j\right)$ belong to $K^0_{n,2}\subset J^0_{n,2}$ (see \eqref{E:K^0}). 
On the other hand, since $w_{\ep_1,2}\in J_{n,2}$, it follows that
$w_{\ep_1}\in J^0_{n,2}$; hence
\[
p\in\b{P}_{\a_n}(F)\cap J^0_{n,2}\subset K^0_{n,2}.
\]
But this implies that $w_{\ep_1}\in K^0_{n,2}$, which is not true. This completes the proof of the case $m\ge 2$ and, consequently,
the proof of the proposition.
\end{proof}

As an immediate consequence, we obtain

\begin{cor}\label{C:double coset}
Let $e=0,1$ such that $m\equiv e\pmod 2$. Then sets $\Sigma_{r,m}$ of representatives of 
\[
P_{\a_r}(F)\backslash\SO_{2n+1}(F)/K^0_{n,m}
\]
can be chosen as follows\footnote{When $r=n$ and $m=e=0$, we understand that $\Sigma_{n,0}=\stt{x_{-\ep_n}(1)}$.}:
\[
\Sigma_{r,m}
=
\begin{cases}
\stt{x_{-\ep_r}(\varpi^{i})\mid  e\le i\le\lceil\frac{m}{2}\rceil}\quad&\text{if $1\le r\le n-1$},\\
\stt{x_{-\ep_n}(\varpi^{i}),\,x_{-\ep_n}(\varpi^{j})w_{\ep_n,e}\mid 
e\le i\le\lceil\frac{m}{2}\rceil,\,1\le j\le\lceil\frac{m}{2}\rceil}\quad&\text{if $r=n$}.
\end{cases}
\] 
\end{cor}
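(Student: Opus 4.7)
The plan is to derive Corollary~\ref{C:double coset} from Proposition~\ref{P:set of rep} by transferring the statement from $\bar{P}_{\alpha_r}$ to $P_{\alpha_r}$ via the longest Weyl element of $W_n$. Set $w := \prod_{j=1}^n w_{\epsilon_j}\in\SO_{2n+1}(F)$; since $w$ represents $-\mathrm{id}\in W_n$, we have $w\,\bar{P}_{\alpha_r}\,w^{-1} = P_{\alpha_r}$ for every $1\le r\le n$, and left multiplication by $w$ induces a bijection
\[
\bar{P}_{\alpha_r}(F)\backslash\SO_{2n+1}(F)/K^0_{n,m}\;\longrightarrow\;P_{\alpha_r}(F)\backslash\SO_{2n+1}(F)/K^0_{n,m},\qquad[\bar{\sigma}]\longmapsto[w\bar{\sigma}].
\]
Using \eqref{E:w conjugate}, each representative $x_{\epsilon_r}(\varpi^i)\in\bar{\Sigma}_{r,m}$ maps to $x_{-\epsilon_r}(\varpi^i)\cdot w$, and each $x_{\epsilon_n}(\varpi^j)w_{\epsilon_n,e}$ to $x_{-\epsilon_n}(\varpi^j)\cdot w\cdot w_{\epsilon_n,e}$. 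This already gives a complete set of representatives of $P_{\alpha_r}(F)\backslash\SO_{2n+1}(F)/K^0_{n,m}$, though not yet in the form of $\Sigma_{r,m}$.

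The remaining task is to rewrite these elements modulo $P_{\alpha_r}(F)$ on the left and $K^0_{n,m}$ on the right to match $\Sigma_{r,m}$. I would use the rank-one identity $w_\alpha(y) = \alpha^\vee(y)\cdot w_\alpha(1)$ applied to each short root $\epsilon_j$ (with $\epsilon_j^\vee = 2\epsilon_j^*$) to decompose $w = \varpi^{2e\lambda_n}\cdot w_{n,e}$. The torus factor $\varpi^{2e\lambda_n}\in T_n(F)\subset P_{\alpha_r}(F)$ absorbs on the left, producing via \eqref{E:z^mu conjugate} an index shift on $x_{-\epsilon_r}(\varpi^i)$, while the residual $w_{n,e}$ is analyzed modulo $K^0_{n,m}$ using \corref{C:K^0 decomp} and \eqref{E:property of K^0}: for $r<n$, all factors of $w_{n,e}$ can be absorbed after further torus adjustments, yielding the first family of $\Sigma_{r,m}$; for $r=n$, the factor $w_{\epsilon_n,e}$ cannot be absorbed and survives as the second family.

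The main obstacle I anticipate is the careful bookkeeping needed to match the explicit representatives—in particular, to obtain the precise index range $[e,\lceil m/2\rceil]$ and to verify distinctness of the resulting cosets (the latter paralleling Step~3 of the proof of \propref{P:set of rep}). An alternative and possibly more transparent route is to mimic the three-step proof of \propref{P:set of rep} directly, but with $P_{\alpha_r}$ in place of $\bar{P}_{\alpha_r}$: one starts from the Iwasawa decomposition $\SO_{2n+1}(F) = B_n(F)\,J_{n,e}$ (\lmref{L:Iwasawa decomp}) and applies the second form of the decomposition $K_{n,m} = \prod_{j=1}^n x_{-\epsilon_j}(\frak{p}^m)\prod_{j=1}^n x_{\epsilon_j}(\frak{o})\,H_{n,m}$ from \lmref{L:K decomp}. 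In this approach the shifted index range $[e,\lceil m/2\rceil]$ arises naturally from the $\frak{p}^m$-coefficient on the $x_{-\epsilon_j}$-factors once one reduces modulo $K^0_{n,m}$, whose $x_{-\epsilon_j}$-coefficient is $\frak{p}^{e+\ell}$ by \corref{C:K^0 decomp}.
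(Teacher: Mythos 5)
Your approach is a genuinely different route, and a more laborious one than the paper's. The paper does not use the longest element $w=\prod_{j=1}^n w_{\epsilon_j}$ at all: it conjugates by $w_{r,e}=\prod_{j=1}^r w_{\epsilon_j,e}$, which is chosen precisely because it satisfies \emph{two} properties at once: $w_{r,e}\,\bar{P}_{\alpha_r}(F)\,w_{r,e}^{-1}=P_{\alpha_r}(F)$, and (by \eqref{E:property of K^0}) $w_{r,e}\in J^0_{n,m}$, so it normalizes $K^0_{n,m}$. With both facts in hand, conjugation $g\mapsto w_{r,e}\,g\,w_{r,e}^{-1}$ is an honest bijection $\bar{P}_{\alpha_r}(F)\backslash\SO_{2n+1}(F)/K^0_{n,m}\to P_{\alpha_r}(F)\backslash\SO_{2n+1}(F)/K^0_{n,m}$, and one simply computes $w_{r,e}\,x_{\epsilon_r}(\varpi^i)\,w_{r,e}^{-1}$ and absorbs the sign. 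That is the whole proof; no absorption of a residual Weyl factor, and no distinctness argument (distinctness is free since conjugation is a bijection).

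Your element $w=w_{n,0}$ does \emph{not} normalize $K^0_{n,m}$ when $m$ is odd (the entry $w_{\epsilon_j}$ puts a unit in the $(j^*,j)$-slot, which must lie in $\frak{p}^e$), so you are forced into left-multiplication, which is a correct bijection on double cosets but leaves a dangling $w$ on the right of each representative. This is where the real work hides, and two points in your sketch are not yet resolved. First, the torus factor: you compute $w=\varpi^{2e\lambda_n}\,w_{n,e}$ using $\epsilon_j^\vee=2\epsilon_j^*$; moving $\varpi^{2e\lambda_n}$ past $x_{-\epsilon_r}(\cdot)$ via \eqref{E:z^mu conjugate} shifts the exponent by $2e$, not $e$, so the resulting index range does not visibly agree with $[e,\lceil m/2\rceil]$ and an extra identification of double cosets would be needed to close the gap. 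Second, absorbing $w_{n,e}$ on the right is not automatic: each $w_{\epsilon_j,e}$ lies in $J^0_{n,m}\setminus K^0_{n,m}$, so $w_{n,e}\in K^0_{n,m}$ only for $n$ even; when $n$ is odd one must first correct the parity (e.g.\ left-multiply by some $w_{\epsilon_j,e}$ with $j>r$, lying in $M_{\alpha_r}(F)\cap J^0_{n,m}$, when $r<n$), and the case $r=n$ requires separate treatment. Neither of these is insurmountable, but they are exactly the ``bookkeeping'' you flag, and they are what the choice of $w_{r,e}$ in the paper is engineered to avoid. Your second suggested route (re-running the three-step proof of \propref{P:set of rep} with $P_{\alpha_r}$ in place of $\bar{P}_{\alpha_r}$) would also work, but it re-derives a full proposition where a one-line conjugation suffices. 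Finally, the distinctness re-verification you anticipate is unnecessary in any correct version of this argument: the number of double cosets is already fixed by Proposition~\ref{P:set of rep}, so once you show $\Sigma_{r,m}$ hits every coset and has the right cardinality, distinctness follows.
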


\begin{proof}
Since
\[
P_{\a_r}(F)=w_{r,e}\,\b{P}_{\a_r}(F)\,w^{-1}_{r,e}, 
\]
and $K^0_{n,m}$ is normalized by $w_{r,e}$, the desired sets of representatives can be obtained by conjugating the sets 
$\b{\Sigma}_{r,m}$ from \propref{P:set of rep} with the element $w_{r,e}$. 

Let $1\le r\le n$ and $0\le i\le \lfloor\frac{m}{2}\rfloor$ be integers. A direct computation shows 
\[
w_{r,e} x_{\ep_r}\left(\varpi^i\right) w_{r,e}^{-1}
=
x_{-\ep_r}\left((-1)^r\varpi^{e+i}\right)
\quad
\text{and}
\quad
w_{n,e} w_{\ep_n, e} w_{n,e}^{-1}=w_{\ep_n,e}.
\]
On the other hand, since
\[
P_{\a_r}(F)\,x_{-\ep_r}\left((-1)^r\varpi^{e+i}\right)\,K^0_{n,m}
=
P_{\a_r}(F)\,x_{-\ep_r}\left(\varpi^{e+i}\right)\,K^0_{n,m}
\]
and
\[
P_{\a_n}(F)\,x_{-\ep_n}\left((-1)^n\varpi^{e+i}\right)w_{\ep_n,e}\,K^0_{n,m}
=
P_{\a_n}(F)\,x_{-\ep_n}\left(\varpi^{e+i}\right)w_{\ep_n,e}\,K^0_{n,m},
\]
the corollary follows.
\end{proof}

\section{Groups from intersections}\label{S:gp from int}
We retain the notation from the beginning of the previous section. Let $s\in\SO_{2n+1}(F)$ and $r,m$ be integers with $1\le r\le n$ and 
$m\ge 0$. Define a subgroup $\b{M}^s_{r,m}$ of $M_{\a_r}(F)$ (see \S\ref{SS:para}) by
\begin{align*}
\b{M}^s_{r,m}
&=
\stt{g\in M_{\a_r}(F)\mid\text{$\exists u\in\b{N}_{\a_r}(F)$ such that $s^{-1}gus\in K^0_{n,m}$}}\\
&=
\stt{g\in M_{\a_r}(F)\mid\text{$\exists u\in\b{N}_{\a_r}(F)$ such that $s^{-1}ugs\in K^0_{n,m}$}}.
\end{align*}
The following observations follow immediately from the definition:
\begin{itemize}
\item[(i)]
If $s\in K^0_{n,m}$, then  $\b{M}^s_{r,m}=M_{\a_r}(F)\cap K^0_{n,m}$.
\item[(ii)]
If $w\in\SO_{2n+1}(F)$ normalizes $K^0_{n,m}$, then $\b{M}^{sw}_{r,m}=\b{M}^s_{r,m}$.
\end{itemize}

The aim of this section is to determine $\b{M}^s_{r,m}$ explicitly for $s$ in the sets $\b{\Sigma}^0_{r,m}$ obtained in 
\propref{P:set of rep}. We note that it suffices to consider $\b{M}^s_{r,m}$ for $s=x_{\ep_r}\left(\varpi^i\right)$ with 
$0\le i\le \lfloor\frac{m}{2}\rfloor$. This deduction follows from the observation (ii), as well as from the fact that 
$w_{\ep_r,e}$ normalizes $K^0_{n,m}$. Here $e=0,1$ satisfies $m\equiv e\pmod{2}$, as usual. For simplicity, we also denote 
\[
s_{r,i}=x_{\ep_r}\left(\varpi^i\right)
\] 
for $i,j\in\bbZ$ with $1\le r\le n$. 

To describe the result, let $\Gamma^{'}_{r,m}\subset\GL_r(\frak{o})$ be the open compact subgroup defined by 
\[
\Gamma^{'}_{r,m}
=
\bordermatrix{
              & (r-1)     & 1       \cr
    (r-1) & \mathfrak{o} &\mathfrak{p}^m   \cr
    1     &\mathfrak{o}& \mathfrak{u}^m \cr
            }\cap {\rm GL}_r(\frak{o}).
\]

\begin{prop}\label{P:gp of int K^0}
Let $e=0,1$ such that $m\equiv e\pmod 2$. Then we have
\[
\b{M}^{s_{r,i}}_{r,m}
=
\stt{\diag{a,h,a^*}\in M_{\a_r}(F)\mid a\in\Gamma^{'}_{r,\lfloor\frac{m}{2}\rfloor-i},\, h\in K^0_{n-r,e+2i}}
\simeq
\Gamma^{'}_{r,\lfloor\frac{m}{2}\rfloor-i}\,\x\, K^0_{n-r, e+2i}
\]
for $0\le i\le\lfloor\frac{m}{2}\rfloor$.
\end{prop}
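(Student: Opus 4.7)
The approach is a matrix-entry analysis using a triangular factorization of $K^0_{n,m}$ along the parabolic $P_{\a_r}$. Fix $0\le i\le \ell:=\lfloor m/2\rfloor$, so $m=e+2\ell$, and note that $s_{r,i}=x_{\ep_r}(\varpi^i)\in N_{\a_r}(F)$. Reformulating, $\b{M}^{s_{r,i}}_{r,m}$ is the image, under the Levi projection $\b{P}_{\a_r}(F)=M_{\a_r}(F)\b{N}_{\a_r}(F)\twoheadrightarrow M_{\a_r}(F)$, of the intersection $s_{r,i}K^0_{n,m}s_{r,i}^{-1}\cap \b{P}_{\a_r}(F)$. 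Combining \corref{C:K^0 decomp} with a straightforward factorization of $H_{n,e}$ along $P_{\a_r}\cap \SO_{2n}(F)$ yields a triangular decomposition
\[
K^0_{n,m}=\bigl(K^0_{n,m}\cap \b{N}_{\a_r}(F)\bigr)\cdot\bigl(K^0_{n,m}\cap M_{\a_r}(F)\bigr)\cdot\bigl(K^0_{n,m}\cap N_{\a_r}(F)\bigr),
\]
whose three factors are pinned down entry-wise by the second description of $K^0_{n,m}$ together with \remref{R:n+1 column}.

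For the inclusion $\supseteq$, given $g=\diag{a,h,a^*}$ with $a\in\Gamma'_{r,\ell-i}$ and $h\in K^0_{n-r,e+2i}$, the plan is to exhibit an explicit $\b{u}\in \b{N}_{\a_r}(F)$, whose entries are built from the last row of $a$ scaled by a power of $\varpi$, and verify by direct matrix computation, using the root-element formulas of \subsecref{SS:Weyl} together with the commutation identity \eqref{E:z^mu conjugate}, that $s_{r,i}^{-1}\,g\,\b{u}\,s_{r,i}$ meets the integrality pattern defining $K^0_{n,m}$. Conversely, if $s_{r,i}^{-1}\,g\,\b{u}\,s_{r,i}=:k\in K^0_{n,m}$, one writes $g\b{u}=s_{r,i}\,k\,s_{r,i}^{-1}$, projects onto the Levi via the triangular decomposition above, and reads off the entry-level conditions on $a$ and $h$. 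On the $\GL_r$-block the $\frak{o}$/$\frak{p}^\ell$-conditions coming from $K^0_{n,m}$ are rescaled by $\varpi^i$ when passing through $s_{r,i}$, producing the mixed pattern defining $\Gamma'_{r,\ell-i}$, with the bottom-right unit condition $\frak{u}^{\ell-i}$ forced by \remref{R:n+1 column}(1); on the $\SO_{2(n-r)+1}$-block, the adjoint action of $s_{r,i}$ on the root groups $x_{\pm\ep_j}(y)$ for $j>r$ shifts their integrality level by $\pm i$ via Chevalley-type commutators with $x_{\ep_r\pm\ep_j}$, yielding precisely the level $e+2i$.

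The main obstacle is expected to be the entry-level bookkeeping in the $\subseteq$ direction, in particular matching the bottom-right $\frak{u}^{\ell-i}$-condition of $\Gamma'_{r,\ell-i}$ with \remref{R:n+1 column}(1), and verifying that the level shift on the $\SO_{2(n-r)+1}$-block is exactly $e+2i$ and not some off-by-one variant. The extreme cases $i=0$, where the formula should reduce to $K^0_{n,m}\cap M_{\a_r}(F)$, and $i=\ell$, where the smaller-group level is maximal while $\Gamma'_{r,0}=\GL_r(\frak{o})$, will serve as useful sanity checks throughout the computation. Finally, the direct-product identification $\b{M}^{s_{r,i}}_{r,m}\simeq \Gamma'_{r,\ell-i}\times K^0_{n-r,e+2i}$ is automatic once the entry conditions are separated, since the $\GL_r$- and $\SO_{2(n-r)+1}$-blocks commute inside $M_{\a_r}(F)$.
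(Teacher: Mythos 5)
Two points in your plan are genuinely broken. First, the triangular factorization $K^0_{n,m}=\bigl(K^0_{n,m}\cap \b{N}_{\a_r}(F)\bigr)\bigl(K^0_{n,m}\cap M_{\a_r}(F)\bigr)\bigl(K^0_{n,m}\cap N_{\a_r}(F)\bigr)$ is false. By \eqref{E:property of K^0} the group $K^0_{n,m}$ contains the hyperspecial maximal compact subgroup $H_{n,e}$ of $\SO_{2n}(F)$, and hyperspecial maximal compacts admit no Iwahori-type factorization along a proper parabolic; concretely, for $m$ even (and $n\ge 2$) the element $w_{\ep_1+\ep_2}\in H_{n,0}\subset K^0_{n,m}$ has singular upper-left $r\x r$ block, so it does not even lie in the big cell $\b{N}_{\a_r}(F)M_{\a_r}(F)N_{\a_r}(F)$, whereas every element of your product set has invertible upper-left block. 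So there is no ``straightforward factorization of $H_{n,e}$ along $P_{\a_r}\cap\SO_{2n}(F)$'', and the Levi projection in your $\subseteq$ step cannot be run through it. Moreover, entry bounds on $k\in K^0_{n,m}$ alone only give $a\in\GL_r(\frak{o})$; the exponents $\lfloor\frac{m}{2}\rfloor-i$ come from finer information on the middle column of $k$ (\remref{R:n+1 column}), which the paper extracts by pairing $k e_0=s_{r,i}^{-1}gu\,s_{r,i}e_0$ against the vectors $e_j$, $n-r+1\le j\le n$, after first using $s_{r,i}, K^0_{n,m}\subset K^0_{n,e+2i}$ to get $a\in\GL_r(\frak{o})$ and $h\in K^0_{n-r,e+2i}$. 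Your converse must be re-based on that kind of computation rather than on the nonexistent decomposition.

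Second, and more seriously, the mechanism you propose for $\supseteq$ is mis-assigned. No compensating element is needed for the $\GL_r$-block: one checks directly that $s_{r,i}^{-1}\,\diag{a, I_{2(n-r)+1}, a^*}\,s_{r,i}\in K^0_{n,m}$ for $a\in\Gamma'_{r,\lfloor\frac{m}{2}\rfloor-i}$. The compensator is needed for the $\SO_{2(n-r)+1}$-block, because $\diag{I_r,h,I_r}$ moves $e_0$, and it must be built from $h$, not from ``the last row of $a$''. Your claim that conjugation by $s_{r,i}$ merely shifts the levels of $x_{\pm\ep_j}(\cdot)$, $j>r$, via Chevalley-type commutators fails: for $y\in\frak{p}^i$ with $i<\lfloor\frac{m}{2}\rfloor$ (e.g.\ $y=\varpi^i$), the matrix $s_{r,i}^{-1}x_{\ep_j}(y)s_{r,i}$ still has the entry $y$ in position $(n+1,j^*)$, while any element of $K^0_{n,m}$ has that entry in $\frak{p}^{\lfloor\frac{m}{2}\rfloor}$; and no $\b{u}$ built from $a$ (here $a=I_r$) can repair this. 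The paper handles exactly these generators, supplied by \lmref{L:K decomp} and \corref{C:K^0 decomp}, by choosing the block-mixing elements $u=x_{-\ep_r+\ep_j}(-\varpi^{-i}y)$, resp.\ $x_{-\ep_r-\ep_j}(\varpi^{-i}z)$, of $\b{N}_{\a_r}(F)$ and verifying that $s_{r,i}^{-1}u\,g\,s_{r,i}$ lands in $H_{n,e}\subset K^0_{n,m}$ (reducing $r>1$ to $r=1$ by an embedding of a smaller odd orthogonal group). Finally, your sanity checks are swapped: it is $i=\lfloor\frac{m}{2}\rfloor$, when $s_{r,i}\in K^0_{n,m}$, that gives $\b{M}^{s_{r,i}}_{r,m}=M_{\a_r}(F)\cap K^0_{n,m}$; at $i=0$ the asserted answer $\Gamma'_{r,\lfloor\frac{m}{2}\rfloor}\x K^0_{n-r,e}$ is very far from $M_{\a_r}(F)\cap K^0_{n,m}$, so the check you planned to use would have led you astray.
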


\begin{proof}
Since $x_{\ep_r}(1)\in K^0_{n,m}=K_{n,m}$ for $m=0,1$, the assertion for $m=0,1$ follows immediately from
the observation (i). Thus, we assume $m\ge 2$ for the remainder of the proof. In particular, this implies 
$\el:=\lfloor\frac{m}{2}\rfloor\ge 1$. We divide the proof into two cases: $r=1$ and $r>1$.

\subsection*{Case $r=1$}
The goal is to establish the equality:
\begin{equation}\label{E:gp of int for r=1}
\b{M}^{s_{1,i}}_{1,m}
=
\stt{\diag{a,h,a^*}\in M_{\a_1}(F)\mid a\in\frak{u}^{\el-i},\,h\in K^0_{n-1, e+2i}}
\end{equation}
for $0\le i\le \el$.

We show that the right-hand side (RHS) of \eqref{E:gp of int for r=1} is contained in the left-hand side (LHS). 
Assume first that
\[
g=\diag{a,h,a^{-1}}\in M_{\a_1}(F)
\]
with $a\in\frak{u}^{\el-i}$ and $h\in H_{n-1,e}$. In this case, we check directly that 
\[
s_{1,i}^{-1}\,g\,s_{1,i}\in K^0_{n,m}.
\]
To this end, write
\[
s_{1,i}
=
\begin{pmatrix}
1&x_i&y_i\\
&I_{2n-1}&x'_i\\
&&1
\end{pmatrix},
\]
where
\[
x_i
=
-2\varpi^i E^{1,2n-1}_{1, n},
\quad
x_i'
=
\varpi^iE^{2n-1,1}_{n,1}
\quad\text{and}\quad
y=-\varpi^{2i}.
\]
Then $s^{-1}_{1,i}$ can be written as
\[
s^{-1}_{1,i}
=
\begin{pmatrix}
1&-x_i&y_i\\
&I_{2n-1}&-x'_i\\
&&1
\end{pmatrix}.
\]
Since $h\in H_{n-1,e}$, we have 
\[
x_ih=x_i
\quad\text{and}\quad
hx'_i=x'_i.
\]
A direct computation shows
\[
s^{-1}_{1,i}\,g\,s_{1,i}
=
\begin{pmatrix}
a&(a-1)x_i& ay_i+y_ia^{-1}+2\varpi^{2i}\\
&h&x'_i(1-a^{-1})\\
&&a^{-1}
\end{pmatrix}
\in
K^0_{n,m}.
\]
This implies that $g\in\b{M}^{s_{i,i}}_{1,m}$.

In view of the decomposition \lmref{L:K decomp} (for $e+2i\ge 1$), the fact that
\[
K_{n,0}
=
\langle H_{n,0},\, x_{\pm\ep_j}(\frak{o})\mid 1\le j\le n\rangle\quad\text{(for $e=i=0$)},
\]
and the relation \eqref{E:K^0}, it remains to prove the claim for 
\[
g=x_{\ep_j}(y)
\]
with $y\in\frak{p}^i$, and 
\[
g=x_{-\ep_j}(z)
\]
with $z\in\frak{p}^{e+i}$, for $2\le j\le n$.

Assume that $g=x_{\ep_j}(y)$. In this case, let $u=x_{-\ep_1+\ep_j}\left(-\varpi^{-i}y\right)\in\b{N}_{\a_1}(F)\cap K^0_{n,e}$. We 
will show that 
\[
s^{-1}_{1,i}\,u\,g\,s_{1,i}\in H_{n,e}.
\]
Since $H_{n,e}\subset K^0_{n,m}$, it then follows that $g\in\b{M}^{s_{1,i}}_{1,m}$. To verify the inclusion, note that $u, g$ and 
$s_{1,i}$ all belong to $K^0_{n,e}$, so 
\[
s^{-1}_{1,i}\,u\,g\,s_{1,i}\in K^0_{n,e}.
\]
Thus, it suffices to show that $s^{-1}_{1,i}\,u\,g\,s_{1,i}\in\SO_{2n}(F)$, i.e. $s^{-1}_{1,i}\,u\,g\,s_{1,i}e_0=e_0$. Now, we compute
\begin{align*}
s^{-1}_{1,i}\,u\,g\,s_{1,i}e_0
&=
x_{\ep_1}\left(-\varpi^i\right)\,x_{-\ep_1+\ep_j}\left(-\varpi^{-i}y\right)\,x_{\ep_j}\left(y\right)\,x_{\ep_1}\left(\varpi^i\right)e_0\\
&=
x_{\ep_1}\left(-\varpi^i\right)\,x_{-\ep_1+\ep_j}\left(-\varpi^{-i}y\right)\,x_{\ep_j}\left(y\right)\left(e_0-2\varpi^i e_{-n}\right)\\
&=
x_{\ep_1}\left(-\varpi^i\right)\,x_{-\ep_1+\ep_j}\left(-\varpi^{-i}y\right)\left(e_0-2y e_{-n+j-1}-2\varpi^i e_{-n}\right)\\
&=
x_{\ep_1}\left(-\varpi^i\right)\left[e_0-2y e_{-n+j-1}-2\varpi^i\left(e_{-n}-\varpi^{-i}ye_{-n+j-1}\right)\right]\\
&=
x_{\ep_1}\left(-\varpi^i\right)\left(e_0-2\varpi^i e_{-n}\right)\\
&=
e_0,
\end{align*}
as wanted. 

If $g=x_{-\ep_j}(z)$, choose $u=x_{-\ep_1-\ep_j}(\varpi^{-i}z)$. Then similar argument and computation show 
\[
s^{-1}_{1,i}\,x_{-\ep_1-\ep_j}\left(\varpi^{-i} z\right)\,x_{-\ep_j}(z)\,s_{1,i}\in H_{n,e}\subset K^0_{n,m}.
\]
Therefore, $g\in\b{M}^{s_{1,i}}_{1,m}$, and the first inclusion is established.

We proceed to show that the LHS of \eqref{E:gp of int for r=1} is contained in the RHS. Since $s_{1,i}\in K^0_{n, e+2i}$ and 
$K^0_{n,m}\subseteq K^0_{n,e+2i}$, it follows that
\[
s_{1,i}K^0_{n,m}s_{1,i}^{-1}\cap\bar{P}_{\a_1}(F)\subseteq K^0_{n,e+2i}\cap\bar{P}_{\a_1}(F).
\]
This implies
\[
\bar{M}^{s_{1,i}}_{1,m}
\subset
\stt{\diag{a,h,a^{-1}}\in M_{\a_1}(F)\mid a\in\frak{o}^\x,\, h\in K^0_{n-1, e+2i}}.
\]
It remains to show that 
\[
g=\diag{a,h,a^{-1}}\in M_{\a_1}(F)
\quad\Longrightarrow\quad 
a\in\frak{u}^{\el-i}.
\]

Suppose that $s_{1,i}^{-1}\,gu\,s_{1,i}\in K^0_{n,m}$ for some $u\in\b{N}_{\a_1}(F)$, and denote $h=gu$. We compute
\begin{align*}
\langle s_{1,i}^{-1}\,h\,s_{1,i}e_0\,,\, e_n\rangle
&=
\langle h\,s_{1,i}e_0\,,\, s_{1,i}e_n\rangle\\
&=
\langle
h\,s_{1,i}e_0\,,\, e_{n}+\varpi^ie_0-\varpi^{2i}e_{-n}
\rangle\\
&=
\langle
s_{1,i}e_0\,,\, h^{-1}e_{n} 
\rangle
+
\langle
h\,s_{1,i}e_0\,,\, \varpi^ie_0-\varpi^{2i}e_{-n}
\rangle\\
&=
\langle
e_0-2\varpi^i e_{-n}\,,\, ae_n
\rangle
+
\varpi^i\langle
h\,s_{1,i}e_0\,,\,e_0-2\varpi^i e_{-n}
\rangle
+\varpi^{2i}\langle
h\,s_{1,i}e_0\,,\,e_{-n}
\rangle\\
&=
-2\varpi^i a
+
\varpi^i\langle
s^{-1}_{1,i}\,h\,s_{1,i}e_0\,,\,e_0
\rangle
+\varpi^{2i}\langle
s^{-1}_{1,i}\,h\,s_{1,i}e_0\,,\,e_{-n}
\rangle.
\end{align*}
In above computations, we used the facts that $h^{-1}e_n=a$ and $s_{1,i}e_{-n}=e_{-n}$.
Since $s_{1,i}^{-1}\,h\, s_{1,i}\in K^0_{n,m}$, it follows from Remark \eqref{R:n+1 column} and \eqref{E:K^0} that
\[
\langle s_{1,i}^{-1}\,h\,s_{1,i}e_0\,,\, e_n\rangle\in 2\,\frak{p}^\ell,
\quad
\langle s^{-1}_{1,i}\,h\,s_{1,i}e_0\,,\, e_0\rangle\in2\,\left(1+2\,\frak{p}^m\right)
\quad\text{and}\quad
\langle
s^{-1}_{1,i}\,h\,s_{1,i}e_0\,,\, e_{-n}
\rangle\in 2\,\frak{p}_E^{e+\ell}.
\]
These relations imply that $a\in\frak{u}^{\ell-i}$, thereby completing the proof of \eqref{E:gp of int for r=1}.

\subsection*{Case $r>1$}
We proceed to prove the case $r>1$. The argument is analogous to, and indeed builds upon, the case $r=1$. We first claim that 
\[
\stt{\diag{a,h,a^*}\in M_{\a_r}(F)\mid a\in\Gamma'_{r,\el-i},\, h\in K^0_{n-r, e+2i}}
\subset
\b{M}^{s_{r,i}}_{r,m}.
\]
The claim follows if we can show that both 
\[
\diag{I_r, h, I_r}
\quad\text{and}\quad
\diag{a, I_{2(n-r)+1}, a^*}
\] 
belong to $\b{M}^{s_{r,i}}_{r,m}$, where $h\in K^0_{n-r, e+2i}$ and $a\in\Gamma'_{r,\el-i}$.

To prove that
\[
\diag{I_r, h, I_r}\in\b{M}^{s_{r,i}}_{r,m}
\]
for $h\in K^0_{n-r, e+2i}$, let $n'=n-r+1<n$ and consider the embedding 
\[
\jmath: \SO_{2n'+1}(F)\hookto\SO_{2n+1}(F);\quad g'\mapsto\diag{I_{r-1}, g', I_{r-1}}.
\]
For clarity, we denote the roots of $\SO_{2n'+1}(F)$ by appending a $\prime$. Thus, $\ep'_1,\ldots, \ep'_{n'}$ are the standard 
basis for $X^*(T_{n'})$, and $\a'_1,\ldots, \a'_{n'}$ are the simple roots of the root system of $\SO_{2n'+1}$. We also denote 
$s'_{r,i}=x_{\ep'_r}\left(\varpi^i\right)$ for $r,i\in\bbZ$ with $1\le r\le n'$. At this point, observe the followings
\[
\jmath\left(s'_{1,i}\right)
=
s_{r,i},
\quad
\jmath\left(\b{N}_{\a'_1}(F)\right)
\subset
\b{N}_{\a_r}(F)
\quad\text{and}\quad
\jmath\left( K^0_{n', m}\right)
\subset
K^0_{n,m},
\]
for every $m\ge 0$.

Now, let $h\in K^0_{n-r, e+2i}$ and put $g=\diag{1,h,1}\in M_{\a'_1}(F)$. By the result for the case $r=1$, there exists 
$u\in \b{N}_{\a'_1}(F)$ such that $s'^{-1}_{1,i}\,gu\,s'_{1,i}\in K^0_{n',m}$. The aforementioned observations then imply
\[
s_{r,i}^{-1}\,\jmath(g)\jmath(u)\,s_{r,i}=\jmath\left(s'^{-1}_{1,i}\,gu\,s'_{1,i}\right)\in\jmath\left(K^0_{n',m}\right)\subset K^0_{n,m}.
\]
Since 
\[
\jmath(g)=\diag{I_r,h,I_r}
\quad\text{and}\quad
\jmath(u)\in\jmath\left(\b{N}_{\a'_1}(F)\right)\subset \b{N}_{\a_r}(F),
\]
we conclude that $\diag{I_r, h,I_r}\in\b{M}^{s_{r,i}}_{r,m}$, as desired.

Next, we verify that 
\[
\diag{a,I_{2(n-r)+1}, a^*}\in\b{M}^{s_{r,i}}_{r,m}
\quad\text{for $a\in\Gamma^{'}_{r,\el-i}$}.
\]
To do so, write
\[
s_{r,i}
=
\begin{pmatrix}
I_r&X_i&Y_i\\
&I_{2(n-r)+1}&X'_i\\
&&I_r
\end{pmatrix},
\]
where
\[
X_i=-2\varpi^i E^{r, 2(n-r)+1}_{r, n-r+1},
\quad 
X^{'}_i=\varpi^i E^{2(n-r)+1, r}_{n-r+1, 1}
\quad\text{and}\quad
Y_i=-\varpi^{2i}E^r_{r,1}.
\]
Then 
\[
s_{r,i}^{-1}
=
\begin{pmatrix}
I_r&-X_i&Y_i\\
&I_{2(n-r)+1}&-X^{'}_i\\
&&I_r
\end{pmatrix},
\]
and a simple computation shows
\[
s^{-1}_{r,i}\begin{pmatrix}a\\&I_{2(n-r)+1}\\&&a^*\end{pmatrix}s_{r,i}
=
\begin{pmatrix}
a&(a-I_r)X_i&aY_i+Y_ia^*-X_iX^{'}_i\\
&I_{2(n-r)+1}&X^{'}_i(I_r-a^*)\\
&&a^*
\end{pmatrix}\in K^0_{n,m},
\]
for $a\in\Gamma^{'}_{r,\el-i}$. This establishes the first inclusion.

It remains to prove the reverse inclusion, i.e.
\[
\b{M}^{s_{r,i}}_{r,m}
\subset
\stt{\diag{a,h,a^*}\in M_{\a_r}(F)\mid a\in\Gamma^{'}_{r,\el-i},\, h\in K^0_{n-r, e+2i}}.
\]
Let $g=\diag{a,h,a^*}\in\b{M}_{r,m}^{s_{r,i}}$ and $u\in\b{N}$ such that 
\[
s^{-1}_{r,i}\,gu\,s_{r,i}\in K^0_{n,m}.
\]
Since both $s_{r,i}$ and $K^0_{n,m}$ are contained in $K^0_{n,e+2i}$, it follows that $gu\in K^0_{n,e+2i}$. This, in turn,  implies that
\[
a\in\GL_r(\frak{o})=\Gamma^{'}_{r,0}
\quad\text{and}\quad
h\in K^0_{n-r, e+2i}.
\]
In particular, this establishes the reverse inclusion for the case $i=\el$. Thus, in the remainder of the proof, we may assume that $i<\el$.
In this case, it remains to show that $a\in\Gamma^{'}_{r,\el-i}$.

Let $g_1=gu$, $n-r+1\le j\le n$ and write
\[
g_1^{-1}e_j
=
\sum_{t=n-r+1}^n a_{j,t} e_t
\]
for some $a_{j,t}\in\frak{o}$. We must show that 
\[
a_{n,n}\in\frak{u}^{\el-i}
\quad\text{and}\quad
a_{j,n}\in\frak{p}^{\el-i}
\]
for $n-r+1\le j\le n-1$. Then the proof follows. To this end, consider 
\[
\langle s_{r,i}^{-1}\,g_1\,s_{r,i}e_0\,,\, e_j\rangle,
\]
which belongs to $2\,\frak{p}^\el$ by Remark \ref{R:n+1 column} and \corref{C:K^0 decomp}.

For $n-r+1\le j\le n-1$, we have $s_{r,i}e_j=e_j$. It follows that
\begin{align*}
\langle s_{r,i}^{-1}\,g_1\,s_{r,i}e_0\,,\, e_j\rangle
=
\langle s_{r,i}e_0\,,\, g^{-1}_1e_j\rangle
=
\langle e_0-2\varpi^i e_{-n}\,,\, g^{-1}_1e_j\rangle
=
-2\varpi^i a_{j,n}.
\end{align*}
This implies that $a_{j,n}\in\frak{p}^{\el-i}$ for $n-r+1\le j\le n-1$. On the other hand, for $j=n$, we have
\begin{align*}
\langle s_{r,i}^{-1}\,g_1\,s_{r,i}e_0\,,\, e_n\rangle
&=
\langle g_1\,s_{r,i}e_0\,,\,e_{n-r+1}+\varpi^i e_0-\varpi^{2i}e_{-n+r-1}\rangle\\
&=
\langle s_{r,i}e_0\,,\,g_1^{-1}e_{n-r+1}\rangle
+
\varpi^i\langle g_1\,s_{r,i}e_0\,,\,v_0-2\varpi^ie_{-n+r-1}\rangle
+
\varpi^{2i}\langle g_1\,s_{r,i}e_0\,,\,e_{-n+r-1}\rangle\\
&=
\langle e_0-2\varpi^{2i}e_{-n+r-1}\,,\,g^{-1}e_{n-r+1}\rangle
+
\varpi^i\langle g_1\,s_{r,i}e_0, s_{r,i}e_0\rangle
+
\varpi^{2i}\langle g_1\,s_{r,i}e_0, s_{r,i}e_{-n+r-1}\rangle\\
&=
-2\varpi^i a_{n,n}
+
\varpi^i\langle s^{-1}_{r,i}\,g_1\,s_{r,i}e_0\,,\,e_0\rangle
+
\varpi^{2i}\langle s^{-1}_{r,i}\,g_1\,s_{r,i}e_0\,,\,e_{-n+r-1}\rangle.
\end{align*}
Since $s^{-1}_{r,i}\,g_1\,s_{r,i}\in K^0_{n,m}$, Remark \ref{R:n+1 column} and \corref{C:K^0 decomp} imply
\[
\langle s^{-1}_{r,i}\,g_1\,s_{r,i}e_0\,,\,e_0\rangle\in 2\left(1+2\,\frak{p}^m\right)
\quad\text{and}\quad
\langle s^{-1}_{r,i}\,g_1\,s_{r,i}e_0\,,\,e_{-n+r-1}\rangle\in 2\,\frak{p}^{e+\el}.
\]
From these, we conclude that $a_{n,n}\in\frak{u}^{\el-i}$, which completes the proof.
\end{proof}

We now derive a corollary of \propref{P:gp of int K^0}. Let $s\in\SO_{2n+1}(F)$, $1\le r\le n$ and $m\ge 0$. 
Define a subgroup $M^s_{r,m}$ of $M_{\a_r}(F)$ similar to $\b{M}^s_{r,m}$ by
\begin{align*}
M^s_{r,m}
&=
\stt{g\in M_{\a_r}(F)\mid\text{$s^{-1}gus\in K^0_{n,m}$ for some $u\in N_{\a_r}(F)$}}\\
&=
\stt{g\in M_{\a_r}(F)\mid\text{$s^{-1}ugs\in K^0_{n,m}$ for some $u\in N_{\a_r}(F)$}}.
\end{align*}
Then the observations (i), (ii) mentioned at the beginning of this section adapt to $M^s_{r,m}$.

Let $\Gamma_{r,m}\subset\GL_r(\frak{o})$ be the open compact subgroups defined by 
\[
\Gamma_{r,m}
=
\bordermatrix{
              & (r-1)     & 1       \cr
    (r-1) & \mathfrak{o} &\mathfrak{o}   \cr
    1     &\mathfrak{p}^m& \mathfrak{u}^m \cr
            }\cap {\rm GL}_r(\frak{o}).
\]
These subgroups were introduced in \cite{Casselman1973} and \cite{JPSS1981} 
(see also \cite{Jacquet2012}) in connection with the newform theory for $\GL_r(F)$. In what follows, we also denote 
\[
\b{s}_{r,i}=x_{-\ep_r}\left(\varpi^i\right)
\]
for $r,i\in\bbZ$ with $1\le r\le n$.

\begin{cor}\label{C:gp of int}
Let $e=0,1$ such that $m\equiv e\pmod 2$. Then we have
\[
M^{\b{s}_{r,i}}_{r,m}
=
\stt{\diag{a,h,a^*}\in M_{\a_r}(F)\mid a\in\Gamma_{r,\lceil\frac{m}{2}\rceil-i},\, h\in K^0_{n-r, 2i-e}}
\simeq
\Gamma_{r,\lceil\frac{m}{2}\rceil-i}\,\x\, K^0_{n-r, 2i-e}
\]
for $e\le i\le\lceil\frac{m}{2}\rceil$.
\end{cor}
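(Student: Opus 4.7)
The plan is to deduce this corollary from \propref{P:gp of int K^0} by the same conjugation trick that was used in \corref{C:double coset} to pass from $\b{\Sigma}_{r,m}$ to $\Sigma_{r,m}$, namely conjugation by $w_{r,e}$. Three properties of $w_{r,e}$ drive the argument: (a) $w_{r,e}\in J^0_{n,m}$ and therefore normalizes its index-two normal subgroup $K^0_{n,m}$; (b) $w_{r,e}$ normalizes the Levi $M_{\a_r}$ and sends $\b{N}_{\a_r}$ onto $N_{\a_r}$ (since $P_{\a_r}=w_{r,e}\,\b{P}_{\a_r}\,w_{r,e}^{-1}$); and (c) the computation already made in the proof of \corref{C:double coset} gives $w_{r,e}\,s_{r,i-e}\,w_{r,e}^{-1}=x_{-\ep_r}((-1)^r\varpi^i)$, which coincides with $\b{s}_{r,i}$ modulo conjugation by a diagonal sign element in $T_n(\frako)\cap K^0_{n,m}$.

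Combining (a)--(c) with the analogues for $M^s_{r,m}$ of the observations (i) and (ii) listed at the beginning of this section, a direct manipulation yields
\[
M^{\b{s}_{r,i}}_{r,m}=w_{r,e}\,\b{M}^{s_{r,i-e}}_{r,m}\,w_{r,e}^{-1},
\]
up to an inner automorphism by the sign element from (c), which visibly preserves all subgroups appearing on the right hand side. Inserting \propref{P:gp of int K^0} with $j:=i-e$ and using the identities $\lfloor m/2\rfloor-(i-e)=\lceil m/2\rceil-i$ and $e+2(i-e)=2i-e$, one obtains exactly the parameter ranges claimed in the corollary; what remains is to describe the $w_{r,e}$-conjugation explicitly on $M_{\a_r}\cong\GL_r(F)\x\SO_{2(n-r)+1}(F)$.

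On the $\GL_r$-factor, the Weyl piece of $w_{r,e}$ acts via the permutation $j\leftrightarrow j^*$ for $1\le j\le r$, which sends $\diag{a,h,a^*}$ to $\diag{J_r a^* J_r,\,\cdot\,,\,\cdot\,}=\diag{{}^t a^{-1},\,\cdot\,,\,\cdot\,}$; the torus piece contributes only the central scalar $\varpi^{-2\lambda_r}$ on the $\GL_r$-block, which is invisible under conjugation. The anti-involution $a\mapsto {}^t a^{-1}$ swaps $\Gamma'_{r,k}$ and $\Gamma_{r,k}$: transposition converts the last-column condition defining $\Gamma'_{r,k}$ into the last-row condition defining $\Gamma_{r,k}$, and each of these is a group hence closed under inversion. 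On the $\SO_{2(n-r)+1}$-factor, the action is trivial when $r$ is even and is conjugation by the sign element $\diag{1,\ldots,1,-1,1,\ldots,1}$ (with $-1$ in the central slot) when $r$ is odd; either way $K^0_{n-r,2i-e}$ is preserved because its defining congruences are insensitive to sign changes. The main obstacle is therefore just this careful bookkeeping of the signs, torus twists, and row/column swaps in the $w_{r,e}$-action on the Levi; once these are pinned down, \corref{C:gp of int} is a formal consequence of \propref{P:gp of int K^0}.
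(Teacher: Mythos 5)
Your proposal is correct and follows the same route as the paper: conjugate by $w_{r,e}$, use that $w_{r,e}$ normalizes $K^0_{n,m}$ and swaps $\b{N}_{\a_r}$ with $N_{\a_r}$, compute $w_{r,e}x_{\ep_r}(\varpi^{i-e})w_{r,e}^{-1}=x_{-\ep_r}((-1)^r\varpi^i)$, and reduce to \propref{P:gp of int K^0} via the identity $\Gamma_{r,k}=\stt{{}^tk^{-1}\mid k\in\Gamma'_{r,k}}$. You spell out the effect of $w_{r,e}$-conjugation on the two Levi factors in more detail than the paper does, but the underlying argument is identical.
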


\begin{proof}
Since
\[
N_{\a_r}(F)=w_{r,e}\,\b{N}_{\a_r}(F)\,w^{-1}_{r,e}
\]
and $w_{r,e}$ normalizes $K^0_{n,m}$, we find that
\[
g\in \b{M}^{s_{r,i}}_{r,m}
\quad\Longleftrightarrow\quad
w_{r,e}gw_{r,e}^{-1}
\in M^{s'_{r,i}}_{r,m}=M^{\b{s}_{r,e+i}}_{r,m},
\]
where $0\le i\le\lfloor\frac{m}{2}\rfloor$ and 
\[
s'_{r,i}=w_{r,e}s_{r,i}w_{r,e}^{-1}=w_{r,e}x_{\ep_r}(\varpi^i)w_{r,e}^{-1}=x_{-\ep_r}\left((-1)^r\varpi^{e+i}\right).
\]
Since
\[
\Gamma_{r,m}=\stt{{}^tk^{-1}\mid k\in\Gamma^{'}_{r,m}},
\]
the corollary now follows from \propref{P:gp of int K^0}.
\end{proof}

\section{Proof of \thmref{T:main}}\label{S:proof}
We prove \thmref{T:main} in this section. The first step is to reduce the argument to the case of tempered representations. To this end, 
we make use of the results from the previous sections together with the newform theory for $\GL_r(F)$.

Let $\tau$ be an irreducible generic representation of $\GL_r(F)$. Denote by $c_\tau$ the smallest 
integer $m\ge 0$ such that $\tau^{\Gamma_{r,m}}\ne 0$. By results of Casselman (\cite{Casselman1973}) and
Jacquet--Piatetski-Shapiro--Shalika (\cite{JPSS1981}, see also \cite{Jacquet2012}, \cite{Matringe2013}), such an integer always 
exists, and moreover
\[
\dim_{\bbC}\tau^{\Gamma_{r,c_\tau}}=1.
\]
In addition, $c_\tau$ is reflected in the associated $\epsilon$-factor:
 \[
 \epsilon(s,\tau,\psi)=cq^{c_\tau\left(s-\frac{1}{2}\right)},
 \] 
 where $c\in\bbC^\x$ is a constant independent of $s$.

\subsection{Reduction to the tempered case}\label{SS:reduction}
Let $1\le r\le n$, and $\tau$ be an irreducible generic representation of $\GL_r(F)$. Let $\pi_0$ be an irreducible representation of 
$\SO_{2(n-r)+1}(F)$. Denote by 
\[
\pi={\rm Ind}_{P_{\a_r}(F)}^{\SO_{2n+1}(F)}\left(\tau\boxtimes\pi_0\right)
\]
the normalized induced representation of $\SO_{2n+1}(F)$. The following lemma is key to the reduction.

\begin{lm}\label{L:reduction}
Suppose that there exist an integer $c_0\ge 0$ and $\e_0=\pm 1$ such that  
\begin{itemize}
\item the subspaces satisfy $\pi_0^{K_{n-r,m}}=0$ for $0\le m<c_0$ and $\dim_\bbC\pi_0^{K_{n-r, c_0}}\le 1$,
\item when $\pi_0^{K_{n-r,c_0}}\ne 0$, the action of $J_{n-r,c_0}/K_{n-r,c_0}$ on $\pi_0^{K_{n-r,c_0}}$ is given by the scalar $\e_0$.
\end{itemize}
Then we have
\begin{itemize}
\item[(1)] the subspaces satisfy $\pi^{K_{n,m}}=0$ for $0\le m<c$ and $\dim_\bbC\pi^{K_{n, c}}=\dim_\bbC\pi_0^{K_{n-r,c_0}}$,
\item[(2)] when $\pi^{K_{n,c}}\ne 0$, the action of $J_{n,c}/K_{n,c}$ on $\pi^{K_{n,c}}$ is given by the scalar 
$\e_0\omega_{\tau}(-1)$,
\end{itemize}
where $c:=c_0+2c_\tau$ and $\omega_\tau$ is the central character of $\tau$.
\end{lm}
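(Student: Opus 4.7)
I work throughout with $K^0_{n,m}$ rather than $K_{n,m}$; these give the same fixed spaces up to conjugation by $\varpi^{\lfloor m/2 \rfloor \lambda_n}$, which identifies $J^0_{n,m}/K^0_{n,m}$ with $J_{n,m}/K_{n,m}$ equivariantly. Frobenius reciprocity / Mackey theory for the normalized parabolic induction yields
\[
\pi^{K^0_{n,m}} \;\cong\; \bigoplus_{s \in P_{\a_r}(F) \backslash \SO_{2n+1}(F) / K^0_{n,m}} \bigl(V_\tau \otimes V_{\pi_0}\bigr)^{P_{\a_r}(F) \cap s K^0_{n,m} s^{-1},\, \delta_{P_{\a_r}}^{1/2}(\tau \boxtimes \pi_0)}.
\]
Since $\tau \boxtimes \pi_0$ is trivial on $N_{\a_r}$, the action of $P_{\a_r}(F) \cap s K^0_{n,m} s^{-1}$ factors through its image $M^s_{r,m}$ in $M_{\a_r}(F)$, which Corollary~\ref{C:gp of int} identifies with $\Gamma_{r, \lceil m/2 \rceil - i} \times K^0_{n-r, 2i - e}$ for $s = \b{s}_{r, i}$. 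Because this stabilizer lies in $\GL_r(\frak{o}) \times K^0_{n-r, 0}$, the modular character $\delta_{P_{\a_r}}^{1/2}$ is trivial on it, so the summand at $\b{s}_{r,i}$ simplifies to $\tau^{\Gamma_{r, \lceil m/2 \rceil - i}} \otimes \pi_0^{K^0_{n-r, 2i - e}}$.

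Combining Corollary~\ref{C:double coset} with this identification (and noting that for $r = n$ the extra representatives $\b{s}_{n, j} w_{\ep_n, e}$ yield the same stabilizer as $\b{s}_{n,j}$, since $w_{\ep_n, e}$ normalizes $K^0_{n,c}$), I invoke the Casselman-JPSS newform theory for $\GL_r$, which says $\tau^{\Gamma_{r, k}} = 0$ for $k < c_\tau$ and has dimension $1$ for $k = c_\tau$. Together with the hypothesis on $\pi_0$, non-vanishing of the summand at $\b{s}_{r,i}$ forces simultaneously $\lceil m/2 \rceil - i \ge c_\tau$ and $2i - e \ge c_0$. A routine arithmetic check shows these are compatible only when $m \ge c_0 + 2c_\tau = c$, and at $m = c$ there is a unique $i = i_0 := \lceil (c_0 + e)/2 \rceil$ realizing them with equality, with $2i_0 - e = c_0$. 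This yields part~(1), and $\dim \pi^{K_{n,c}} = \dim \tau^{\Gamma_{r,c_\tau}} \cdot \dim \pi_0^{K_{n-r,c_0}} = \dim \pi_0^{K_{n-r,c_0}}$.

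For part~(2), any representative $w \in J^0_{n,c} \setminus K^0_{n,c}$ normalizes $K^0_{n,c}$ and hence preserves $\pi^{K^0_{n,c}}$; since $\b{s}_{r,i_0}$ is the unique contributing coset, $w$ must stabilize $P_{\a_r}(F) \b{s}_{r,i_0} K^0_{n,c}$. Writing $\b{s}_{r, i_0} w = p \,\b{s}_{r, i_0}\, k$ with $p \in P_{\a_r}(F)$ and $k \in K^0_{n,c}$, the Mackey formula shows $w$ acts on the summand by $\delta_{P_{\a_r}}^{1/2}(p)\,\sigma(p)$. Writing the Levi projection of $p$ as $(a_w, h_w) \in \GL_r(F) \times \SO_{2(n-r)+1}(F)$, one verifies that $a_w$ represents (modulo $\Gamma_{r, c_\tau}$) the standard Atkin--Lehner element for $\GL_r(F)$ at level $c_\tau$, which acts on $\tau^{\Gamma_{r,c_\tau}}$ by $\omega_\tau(-1)$ by Casselman-JPSS theory, while $h_w$ represents the Atkin--Lehner element for $\SO_{2(n-r)+1}(F)$ at level $c_0$ and acts on $\pi_0^{K^0_{n-r, c_0}}$ by $\e_0$ by hypothesis. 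Since $a_w \in \GL_r(\frak{o})$, the modular character evaluates trivially, giving the scalar $\e_0\,\omega_\tau(-1)$.

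The main obstacle is the Atkin--Lehner bookkeeping in part~(2): one has to exhibit the explicit decomposition $\b{s}_{r, i_0} w = p\, \b{s}_{r, i_0}\, k$, identify the Levi projection $(a_w, h_w)$ as the expected product of Atkin--Lehner-type elements on each factor, and manage the sign conventions for \eqref{E:w conjugate} (the factor $\omega_\tau(-1)$ arises precisely from the nontrivial $\GL_r$-component that the chosen $w$ induces after conjugation by $\b{s}_{r, i_0}$). The special case $r = n$ (with $\pi_0$ the trivial representation of $\SO_1$ and $\e_0 = 1$) serves as a sanity check, since the scalar there reduces to $\omega_\tau(-1)$ coming entirely from the $\GL_n$ side, in agreement with the $\epsilon$-factor identity $\epsilon(\tfrac{1}{2}, \tau, \psi) \epsilon(\tfrac{1}{2}, \tau^\vee, \psi) = \omega_\tau(-1)$ that underlies the formula $\e_\pi = \e_0\,\omega_\tau(-1)$.
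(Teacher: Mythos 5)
Your proof of part (1) follows the paper's approach: reduce to $K^0_{n,m}$, apply Mackey theory with \corref{C:double coset} and \corref{C:gp of int}, and invoke the $\GL_r$ newform theory together with the hypothesis on $\pi_0$ to isolate the unique contributing coset $\b{s}_{r,i_0}$ at $m=c$. The only small omission is that for $r=n$ you should note explicitly that the second family of representatives $\b{s}_{n,j}w_{\ep_n,e}$ has $j\ge 1 > 0 = i_0$ (since $c_0=0$, $e=0$ in this case), so it never contributes at $m=c$; this is needed to conclude the contributing coset is truly unique, but it is a minor bookkeeping point that follows from your own analysis.

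The gap is in part (2), and it is not merely the unperformed computation you flag. You assert that, after writing $\b{s}_{r,i_0}w = p\,\b{s}_{r,i_0}\,k$ with Levi projection $(a_w,h_w)$, the $\GL_r$-component $a_w$ ``represents (modulo $\Gamma_{r,c_\tau}$) the standard Atkin--Lehner element for $\GL_r(F)$ at level $c_\tau$, which acts on $\tau^{\Gamma_{r,c_\tau}}$ by $\omega_\tau(-1)$ by Casselman--JPSS theory.'' Both halves of this are false. The $\GL_r$ Atkin--Lehner operator on the newform line does not act by $\omega_\tau(-1)$; its eigenvalue is the root-number type quantity appearing in the $\GL_r$ functional equation. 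More importantly, $a_w$ is not the Atkin--Lehner element at all. If one carries out the computation with the natural choice $w=w_{\ep_n,e}$ (which the paper does), one finds
\[
\b{s}_{r,i_0}\,w_{\ep_n,e}
=\diag{-I_r,\,w',\,-I_r}\,\b{s}_{r,i_0},
\]
so $a_w=-I_r$ is the \emph{central} element, acting by $\omega_\tau(-1)$ purely by centrality, and $h_w=w'$ is the Atkin--Lehner representative for $\SO_{2(n-r)+1}$ at level $c_0$ (this is correct in your sketch). Your two incorrect claims happen to cancel and yield the right scalar $\e_0\omega_\tau(-1)$, but the derivation is not valid; to repair it you must exhibit the decomposition explicitly (most easily with $w=w_{\ep_n,e}$ via the relation $w_{\ep_n,e}^{-1}x_{-\ep_r}(y)w_{\ep_n,e}=x_{-\ep_r}(-y)$ for $r<n$) and observe that the $\GL_r$-factor is central, not an Atkin--Lehner element. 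The closing ``sanity check'' via the $\epsilon$-factor identity is also orthogonal to the lemma itself; the identification $\e_\pi=\e_0\omega_\tau(-1)$ belongs to the subsequent corollary, where $\epsilon$-factor multiplicativity is invoked, not to the proof of this lemma.
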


\begin{proof}
Since $K_{n,m}$ is conjugate to $K_{n,m}^0$, the proof reduces to verifying the analogous assertion for $K^0_{n,m}$.
Suppose that $1\le r\le n-1$. Then by \corref{C:double coset} and \corref{C:gp of int}, we have 
\begin{align*}
\pi^{K^0_{n,m}}
\simeq
\bigoplus_{s\in \Sigma_{r,m}}(\tau\boxtimes\pi_0)^{M^s_{r,m}}
\simeq
\bigoplus_{i=e}^{\lceil\frac{m}{2}\rceil} \tau^{\Gamma_{r,\lceil\frac{m}{2}\rceil-i}}\ot\pi_0^{K^0_{n-r, 2i-e}},
\end{align*}
where $e=0,1$ is such that $m\equiv e\pmod 2$. By the result of \cite{JPSS1981} and the assumption on $\pi_0$, we find that 
\[
\tau^{\Gamma_{r,\lceil\frac{m}{2}\rceil-i}}\ot\pi_0^{K^0_{n-r, 2i-e}}\ne 0
\]
implies $\lceil\frac{m}{2}\rceil-i\ge c_\tau$ and $2i-e\ge c_0$. In turn, these inequalities yield $m\ge c_0+2c_\tau=c$. This shows that 
$\pi^{K^0_{n,m}}=0$ for $0\le m<c$.

On the other hand, when $m=c$, the result in $loc$. $cit$. together with the assumption on $\pi_0$ imply 
\begin{align*}
\dim_{\bbC}\pi^{K_{n,c}^0}
&=
\sum_{i=e}^{\lceil\frac{c}{2}\rceil}\dim_{\bbC}\left(\tau^{\Gamma_{r,\lceil\frac{c}{2}\rceil-i}}\ot\pi_0^{K^0_{n-r, 2i-e}}\right)\\
&\overset{i_0=\lceil\frac{c_0}{2}\rceil}{=}
\dim_{\bbC}\left(\tau^{\Gamma_{r,\lceil\frac{c}{2}\rceil-i_0}}\ot\pi_0^{K^0_{n-r, 2i_0-e}}\right)\\
&\quad=
\dim_{\bbC}\left(\tau^{\Gamma_{r, c_\tau}}\ot\pi_0^{K^0_{n-r, c_0}}\right)\\
&\quad=\dim_\bbC\pi_0^{K_{n-r,c_0}}.
\end{align*}
This establishes the assertion $(1)$ for the case $1\le  r\le n-1$.


We proceed to prove the assertion $(2)$, again for the case $1\le r\le n-1$. Note that the above argument also implies
\[
\pi^{K_{n,c}}=\bbC f_\pi,
\]
where $f_\pi$ is characterized by 
\[
\supp(f_\pi)=P_{\a_r}(F)sK^0_{n,c}
\quad\text{and}\quad
 f_\pi\left(s\right)=v_\tau\ot v_{\pi_0}, 
\]
with $s:=x_{-\ep_r}\left(\varpi^{\lceil\frac{c_0}{2}\rceil}\right)$  and  $v_\tau\in\tau^{\Gamma_{r,c_\tau}}$,  
$v_{\pi_0}\in\pi_0^{K^0_{n-r, c_0}}$ fixed basis vectors. 

Since the case $c=0$ follows easily from the fact that $J^0_{n,0}=K^0_{n,0}=K_{n,0}$, we may assume that $c>1$. 
To compute the action of $J^0_{n,c}/K^0_{n,c}$ on $\pi^{K^0_{n,c}}$, it suffices to compute the action of any element 
$w\in J^0_{n,c}\setminus K^0_{n,c}$ on the same space. To this end, take $w=w_{\ep_n,e}$ (see \eqref{E:property of K^0}). 
Suppose that the action of $J^0_{n,c}/K^0_{n,c}$ on $\pi^{K^0_{n,c}}$ is given by the scalar $\e=\pm 1$, i.e.
\[
\pi(w)f_\pi=\e f_\pi.
\]
To determine $\e$, it suffices to compute the value $\pi(w)f_\pi(s)=f_\pi\left(sw\right)$.

Since $r<n$, a simple computation shows
\[
sw
=
x_{-\ep_r}\left(\varpi^{\lceil\frac{c_0}{2}\rceil}\right)w_{\ep_n,e}
=
w_{\ep_n,e}x_{-\ep_r}\left(-\varpi^{\lceil\frac{c_0}{2}\rceil}\right)
=
\diag{-I_r, w', -I_r}s,
\]
where $w'\in\SO_{2(n-r)+1}(F)$ is the element analogue to $w=w_{\ep_n,e}\in\SO_{2n+1}(F)$. Since
$c_0\equiv c\equiv e\pmod 2$, we have $w'\in J^0_{n-r,c_0}\setminus K^0_{n-r, c_0}$, and the above identity implies
\begin{align*}
\e\left(v_\tau\ot v_{\pi_0}\right)
=f_{\pi}(sw)
&=f_\pi\left(\diag{-I_r, w', -I_r}s\right)\\
&=\tau(-I_r)v_\tau\ot\pi_0(w')v_{\pi_0}
=\e_0\omega_\tau(-1)\left(v_\tau\ot v_{\pi_0}\right).
\end{align*}
This proves the assertion $(2)$ for the case $1\le r<n$.

Suppose that $r=n$. Note that in this case, $\pi_0$ is the trivial representation of the trivial group $\SO_1(F)$. It follows that 
$c_0=0$, $\e_0=1$ and $\dim_\bbC\pi_0^{K^0_{n-r,c_0}}=1$. Thus our task is to show that $\pi^{K^0_{n,m}}=0$ for 
$0\le m<c=2c_\tau$, that $\dim_\bbC\pi^{K^0_{n,c}}=1$, and that the action of $J^0_{n,c}/K^0_{n,c}$ on $\pi^{K^0_{n,c}}$ is given 
by the scalar $\omega_{\tau}(-1)$.

By \corref{C:double coset} and \corref{C:gp of int}, we have 
\begin{align*}
\pi^{K^0_{n,m}}
\simeq
\bigoplus_{s\in \Sigma_{n,m}}\tau^{M^s_{n,m}}
\simeq
\bigoplus_{i=e}^{\lceil\frac{m}{2}\rceil}\tau^{\Gamma_{n,\lceil\frac{m}{2}\rceil-i}}
\oplus
\bigoplus_{j=1}^{\lceil\frac{m}{2}\rceil}\tau^{\Gamma_{n,\lceil\frac{m}{2}\rceil-j}}
\end{align*}
where $e=0,1$ satisfies $m\equiv e\pmod 2$. By the result of \cite{JPSS1981}, we find that 
\[
\tau^{\Gamma_{n,\lceil\frac{m}{2}\rceil-i}}\ne 0
\]
implies $\lceil\frac{m}{2}\rceil-i\ge c_\tau$. Therefore, we must have $m\ge 2c_\tau=c$, and hence $\pi^{K^0_{n,m}}=0$ for 
$0\le m<c$.

On the other hand, when $m=c=2c_\tau$, the result in $loc$. $cit$. also implies
\begin{align*}
\dim_\bbC\pi^{K^0_{n,c}}
=
\dim_\bbC\tau^{\Gamma_{n,c_\tau}}
+
2\sum_{i=1}^{c_\tau}\dim_\bbC\tau^{\Gamma_{n,c_\tau-i}}=1.
\end{align*}
This prove the assertion $(1)$. Note that the above argument also shows that 
\[
\pi^{K^0_{n,c}}=\bbC f_\pi,
\]
where $f_\pi$ is characterized by 
\[
\supp(f_\pi)=P_{\a_n}(F)sK^0_{n,c}
\quad\text{and}\quad
f_\pi(s)=v_\tau,
\]
with $s:=x_{-\ep_n}(1)$ and $v_\tau\in\tau^{\Gamma_{n,c_\tau}}$ a fixed basis vector.

To prove the assertion $(2)$ in this case, we may again assume $c>0$, as the case $c=0$ follows easily from the fact that 
$J^0_{n,0}=K^0_{n,0}=K_{n,0}$. For $c>0$, let $w=w_{\ep_n}\in J^0_{n,c}\setminus K^0_{n,c}$. Then we have
\[
\pi(w)f_\pi=\e f_\pi
\]
for some $\e=\pm 1$. It then follows from the identity
\[
sw
=
x_{\ep_n}(1)w_{\ep_n}
=
x_{\ep_n}(1)x_{-\ep_n}(-1)
=
x_{\ep_n}(1)\diag{-I_n,1,-I_n} s
\]
that
\[
\e v_\tau
=
f_\pi(sw)
=
f_\pi\left(x_{\ep_n}(1)\diag{-I_n,1,-I_n} s\right)
=
\tau(-I_n)v_\tau
=\omega_\tau(-1)v_\tau.
\]
This proves the assertion $(2)$ for the case $r=n$, and hence complete the proof of the lemma.
\end{proof}

As a corollary, we obtain:

\begin{cor}
If \thmref{T:main} $(1)$ and $(2)$ hold for all irreducible generic tempered representations, then they hold for all irreducible
generic representations. 
\end{cor}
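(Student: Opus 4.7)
The plan is to induct on $n$. The base case ($n=0$, or more generally any situation in which every irreducible generic representation of $\SO_{2n+1}(F)$ is tempered) is immediate from the assumption of the corollary.

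For the inductive step, fix $n \ge 1$ and let $\pi$ be an irreducible generic representation of $\SO_{2n+1}(F)$. If $\pi$ is tempered, the conclusion is the assumption. Otherwise, by the Langlands classification for irreducible generic representations (the Standard Module Conjecture in this setting, due to Mui\'c and Casselman--Shahidi), $\pi$ is isomorphic to a \emph{fully induced} standard module from a proper parabolic. In particular, one can peel off a single $\GL$-block: there exist $1\le r\le n$, an irreducible generic representation $\tau$ of $\GL_r(F)$ (of the form $\tau'\otimes|\det|^s$ for some tempered generic $\tau'$ and $s>0$), and an irreducible generic representation $\pi_0$ of $\SO_{2(n-r)+1}(F)$ such that
\[
\pi \simeq \Ind_{P_{\a_r}(F)}^{\SO_{2n+1}(F)}\left(\tau\boxtimes\pi_0\right).
\]
By the inductive hypothesis applied to the smaller group, $\pi_0$ satisfies Theorem~\ref{T:main}~(1) and (2); in particular the hypotheses of \lmref{L:reduction} are met with $c_0=c_{\pi_0}$ and $\e_0=\e_{\pi_0}$.

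Applying \lmref{L:reduction} to this decomposition then yields the desired vanishing and one-dimensionality for $\pi^{K_{n,m}}$ with $c:=c_{\pi_0}+2c_\tau$, and identifies the action of $J_{n,c}/K_{n,c}$ (when $\pi^{K_{n,c}}\ne 0$) with the scalar $\e_{\pi_0}\omega_\tau(-1)$. It remains to match these two quantities with the invariants $c_\pi$ and $\e_\pi$ read off from $\epsilon(s,\phi_\pi,\psi)$. This is a standard computation: the $L$-parameter decomposes as $\phi_\pi = \phi_\tau \oplus \phi_{\pi_0}\oplus\phi_\tau^{\vee}$, so by multiplicativity
\[
\epsilon(s,\phi_\pi,\psi)
= \epsilon(s,\tau,\psi)\,\epsilon(s,\phi_{\pi_0},\psi)\,\epsilon(s,\tau^\vee,\psi),
\]
and combining the $\GL_r$ functional equation $\epsilon(s,\tau,\psi)\,\epsilon(1-s,\tau^\vee,\psi)=\omega_\tau(-1)$ with the shape $\epsilon(s,\tau,\psi)=c\,q^{c_\tau(s-1/2)}$ gives $\epsilon(s,\tau,\psi)\epsilon(s,\tau^\vee,\psi)=\omega_\tau(-1)q^{2c_\tau(s-1/2)}$. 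Substituting, one reads off $c_\pi=c_{\pi_0}+2c_\tau=c$ and $\e_\pi=\e_{\pi_0}\omega_\tau(-1)$, completing the induction.

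The main conceptual step is the appeal to the standard module theorem to produce a maximal-parabolic decomposition in which the inducing data on $\SO_{2(n-r)+1}(F)$ is again irreducible generic; once that is in hand, the entire argument is simply the combination of \lmref{L:reduction} with the multiplicativity of $\epsilon$-factors, neither of which requires any new idea beyond what has already been developed.
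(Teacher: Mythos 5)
Your proof is correct and takes essentially the same route as the paper: Langlands classification combined with the standard module conjecture (Casselman--Shahidi, Mui\'c) to realize $\pi$ as a fully induced representation, followed by \lmref{L:reduction} and multiplicativity of $\epsilon$-factors for the identities $c_\pi = c_{\pi_0}+2c_\tau$ and $\e_\pi = \e_{\pi_0}\omega_\tau(-1)$. The only organizational difference is that you induct on the rank $n$ (peeling off a single essentially tempered $\GL_r$-block and applying the inductive hypothesis to the resulting irreducible generic $\pi_0$ on $\SO_{2(n-r)+1}$, whose irreducibility again follows from the standard module conjecture on the smaller group), whereas the paper decomposes down to essentially square-integrable $\GL$-blocks and inducts on the number $k$ of such blocks; the two bookkeeping schemes are equivalent.
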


\begin{proof}
Let $\pi$ be an irreducible generic representation of $\SO_{2n+1}(F)$ that is non-tempered.
Then by Langlands' classification (\cite{Silberger1978}) together with the standard module conjecture proved in 
\cite{CasselmanShahidi1998} and  \cite{Muic2001}, we have
\[
\pi
=
{\rm Ind}_{P_{S}(F)}^{\SO_{2n+1}(F)}(\tau_1\boxtimes\cdots\boxtimes\tau_k\boxtimes\pi_0)
\quad
(\text{normalized induction}),
\]
where $S=\stt{\a_{r_1}, \a_{r_1+r_2}, \ldots, \a_{r_1+r_2+\cdots +r_k}}$ is a subset of $\Delta_n$ with $1\le r:=r_1+r_2\cdots r_k\le n$.
The Levi subgroup of $P_{S}$ isomorphic to 
\[
{\rm GL}_{r_1}\x\cdots\x{\rm GL}_{r_k}\x\SO_{2(n-r)+1}.
\] 
Here each $\tau_j$ ($1\le j\le n$) is an irreducible essentially square integrable representations of ${\rm GL}_{r_j}(F)$, 
and $\pi_0$ is an irreducible tempered generic representation of $\SO_{2(n-r)+1}(F)$.

Now, the corollary follows from induction on $k$, invoking \lmref{L:reduction}, together with the relations
\[
c_\pi=c_{\pi_0}+2c_{\tau_1}+\cdots+2c_{\tau_k}
\quad\text{and}\quad
\e_\pi=\e_{\pi_0}\omega_{\tau_1}(-1)\cdots\omega_{\tau_k}(-1),
\]
where $\omega_{\tau_j}$ ($1\le j\le k$) denotes the central character of $\tau_j$, and by applying induction in stages.
\end{proof}

\subsection{Local Rankin-Selberg integrals}\label{SS:RS int}
In the remainder of this section, we focus on proving \thmref{T:main} for tempered representations. To this end, we require the 
local Rankin-Selberg integrals for $\SO_{2n+1}\x\GL_r$ developed by Ginzburg (\cite{Ginzburg1990}) and Soudry  
(\cite{Soudry1993}, \cite{Soudry2000}). 

Let $Z_r\subset\GL_r$ be the upper triangular maximal unipotent subgroup. 
Define a non-degenerate character of $Z_r(F)$ by 
\[
\psi^{-1}_{Z_r}(z)
=
\psi^{-1}(z_{12}+z_{23}+\cdots+z_{r-1, r}),
\]
for $z=(z_{ij})\in Z_r(F)$. Let $\tau$ be an irreducible generic representation of $\GL_r(F)$. We fix a nonzero element 
\[
\Lambda_{\tau,\psi^{-1}}\in{\rm Hom}_{Z_r(F)}(\tau,\psi^{-1}_{Z_r}). 
\]
Let $s$ be a complex number. Denote by $\tau_s$ the representation of $\GL_r(F)$ on the same space of $\tau$ with the action 
$\tau_s(a)=\tau(a)|\det(a)|_F^{s-\frac{1}{2}}$.

Suppose that $1\le r\le n$. Recall that we have identified the split group $\SO_{2r}(F)$ as a 
subgroup of $\SO_{2n+1}(F)$ via the embedding \eqref{E:embedding}.
Let $Q_r\subset \SO_{2r}$ be a Siegel parabolic subgroup with the Levi decomposition $L_r\ltimes Y_r$, where
\[
L_r(F)
=
\stt{m_r(a)=\pMX{a}{}{}{a^*}\mid \text{$a\in\GL_r(F)$}}\cong\GL_r(F)
\]
and 
\[
Y_r(F)
=
\stt{\pMX{I_r}{b}{}{I_r}\mid\text{$b\in\M_{r\x r}(F)$ with $b=-J_r{}^t b J_r$}}.
\]

Denote by
\[
\rho_{\tau, s}={\rm Ind}_{Q_r(F)}^{\SO_{2r}(F)}(\tau_s)
\]
a normalized induced representation of $\SO_{2r}(F)$. 
Its underlying space, denoted $I_r(\tau,s)$ of $\rho_{\tau,s}$, consists of smooth functions $\xi_s: \SO_{2r}(F)\to\tau$ satisfying 
\[
\xi_s(m_r(a)uh)=\delta_{Q_r}^{\frac{1}{2}}(m_r(a))\tau_s(a)\xi_s(h),
\]
for $a\in\GL_r(F)$, $u\in Y_r(F)$ and $h\in\SO_{2r}(F)$. Note that the modulus character $\delta_{Q_r}$ of $Q_r$ is given by 
$\delta_{Q_r}(m_r(a))=|\det(a)|_F^{r-1}$. 

Let $\pi$ be an irreducible generic representation of $\SO_{2n+1}(F)$, and fixed a nonzero Whittaker functional 
\[
\Lambda_{\pi,\psi}\in{\rm Hom}_{U_n(F)}(\pi,\psi_{U_n}).
\] 
Then the local Rankin-Selberg integral $\Psi_{r}(v\ot\xi_s)$ attached to $v\in\pi$ and $\xi_s\in I_r(\tau,s)$ is defined by
\[
\Psi_{r}(v\ot\xi_s)
=
\int_{\left(\SO_{2r}(F)\cap U_n(F)\right)\backslash \SO_{2r}(F)}\int_{\M_{(n-r)\x r}(F)}
W_v(\hat{x}h)f_{\xi_s}(h)dxdh,
\]
where $W_v(g)=\Lambda_{\pi,\psi}(\pi(g)v)$ is the Whittaker function associated to $v$, 
$f_{\xi_s}(h)=\Lambda_{\tau,\psi^{-1}}(\xi_s(h))$, and 
\[
\hat{x}=
\begin{pmatrix}I_r&&&&\\x&I_{n-r}&&\\&&1\\&&&I_{n-r}&\\&&&-J_r{}^txJ_{n-r}&I_r\end{pmatrix}
\in\SO_{2n+1}(F)
\]
for $x\in{\rm Mat}_{(n-r)\x r}(F)$.

As usual, these integrals converge absolutely for $\Re(s)\gg 0$, admit meromorphic continuation to the entire complex plane, and 
yield rational functions in $q^{-s}$. Furthermore, they satisfy a functional equation relating $s$ and $1-s$, and there exist $v$ and 
$\xi_s$ such that $\Psi_r(v\ot\xi_s)\equiv 1$.

\begin{lm}\label{L:converge}
If both $\pi$ and $\tau$ are tempered, then the integrals $\Psi_r(v\ot\xi_s)$ converge absolutely for $\Re(s)>0$.
\end{lm}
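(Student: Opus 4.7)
The plan is to reduce the integral to one over a maximal torus via Iwasawa decompositions, then apply standard asymptotic estimates for Whittaker functions of tempered representations and match up the modulus characters.

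First I would apply the Iwasawa decomposition $\SO_{2r}(F) = Q_r(F)\cdot K_{2r}$ for some good maximal compact $K_{2r} \subset \SO_{2r}(F)$. The quotient $(\SO_{2r}(F) \cap U_n(F)) \backslash \SO_{2r}(F)$ can thus be described as $(Z_r(F) \cap L_r(F)^{\text{diag}}) \backslash L_r(F) \cdot K_{2r}$ after absorbing $Y_r(F)$ (on which $f_{\xi_s}$ transforms trivially under our Whittaker functional, up to an obvious translation of the $M_{(n-r)\times r}$ variable). Using the standard Iwasawa decomposition on $L_r(F) \cong \GL_r(F)$, this further reduces to an integral over the diagonal torus $T_r(F)$ (identified with a subtorus of $T_n$ via the Levi embedding), times compact pieces. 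After unfolding, the integral takes the shape
\[
\int_{K_{2r}}\!\int_{T_r(F)}\!\int_{\M_{(n-r)\times r}(F)} W_v\!\left(\hat x\cdot m_r(t)\cdot k\right) \cdot W_{\tau,k}(t)\cdot |\det t|_F^{s-\frac{1}{2}}\cdot \delta_{Q_r}^{\frac{1}{2}}(m_r(t))\,\delta_{B_r}^{-1}(t)\,dx\,dt\,dk,
\]
where $W_{\tau,k}$ is the Whittaker function of $\tau$ attached to the vector $\xi_s(k) \in \tau$ (whose absolute value is independent of $s$ on $K_{2r}$).

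Second, I would bound the Whittaker functions using the well-known estimates for tempered representations (Casselman–Shalika asymptotics, refined to the tempered case by Jacquet, Lapid–Mao, and others): if $\pi$ is tempered and $t = \mathrm{diag}(t_1,\ldots,t_n,1,t_n^{-1},\ldots,t_1^{-1}) \in T_n(F)$, then
\[
|W_v(t)| \;\leq\; C\cdot \delta_{B_n}^{\frac{1}{2}}(t)\cdot \prod_{\alpha \in \Delta_n}\phi_\alpha(\alpha(t)),
\]
where each $\phi_\alpha$ is a function on $F^\times$ that is bounded and vanishes outside $\mathfrak{o}$ when $\alpha(t) \to \infty$ (and has polynomial log growth when $\alpha(t) \to 0$). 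The analogous estimate holds for the tempered $\tau$, yielding a similar bound on $W_{\tau,k}(t)$ involving $\delta_{B_r}^{\frac{1}{2}}$. The inner $x$-integral contributes a bounded function of $t$ (by Shalika's germ / gauge estimates, the integrand restricted to $\hat x \cdot m_r(t)k$ is compactly supported in $x$ once $t$ is fixed in the dominant chamber, with uniform estimates), so it does not affect convergence.

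Third, one assembles the moduli. The character $\delta_{B_n}$ of $B_n \subset \SO_{2n+1}$ restricted to $T_r$ (embedded as above) combines with $\delta_{Q_r}^{\frac{1}{2}}\cdot \delta_{B_r}^{-1}\cdot \delta_{B_r}^{\frac{1}{2}}$ to give a net power of $|t_i|_F$. Writing $t_i = \varpi^{m_i}$ and restricting attention to the dominant chamber, the integral collapses to a sum of the form $\sum_{m \in \mathbb{Z}^r_{\geq 0}} q^{-s\cdot(m_1+\cdots+m_r)}\cdot P(m)$, where $P(m)$ has polynomial growth, and this sum converges precisely for $\Re(s) > 0$. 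Contributions from non-dominant $t$ are handled by the rapid decay of the $\phi_\alpha$ factors.

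The main obstacle is the bookkeeping of modulus characters and of the inner $x$-integral. In particular, one must verify that the shift by $\hat x$ in $W_v(\hat x \cdot m_r(t)k)$ still allows the tempered estimate to be applied (after conjugating $\hat x$ past $m_r(t)$, one picks up factors of $t$ that must be absorbed into the Schwartz-type majorants $\phi_\alpha$), and that the net exponent on each $|t_i|_F$ yields the convergence threshold $\Re(s) > 0$ rather than some shifted value. Once these are matched — and this matching is precisely what underlies the standard functional equation for $\Psi_r$ — absolute convergence for $\Re(s) > 0$ follows.
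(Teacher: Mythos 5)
Your outline matches the structure underlying the paper's proof---Iwasawa decomposition to reduce to a torus integral, tempered Whittaker asymptotics, and modulus-character bookkeeping to locate the convergence threshold---but you leave the crux undone. You yourself write that ``the main obstacle is the bookkeeping of modulus characters and of the inner $x$-integral'' and that ``once these are matched\ldots absolute convergence for $\Re(s)>0$ follows.'' That matching is precisely the content of the lemma, not a routine verification: it is what decides whether the threshold is $\Re(s)>0$ or some shifted value, and your sketch does not carry it out.

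The paper sidesteps this by invoking Soudry's Proposition 4.2 in \cite{Soudry1993}, which performs the Iwasawa reduction, the collapse of the $\hat{x}$-integral, and the modulus-character accounting once and for all, reducing the estimate to a product of one-variable integrals
\[
\prod_{j=1}^r\int_{F^\times}\left|\varphi_j(y_j)\,\chi_j(y_j)\right|\,|y_j|_F^{js}\,d^\times y_j,
\]
with $\varphi_j$ Bruhat--Schwartz and $\chi_j$ a character of $F^\times$ arising from the asymptotic expansions of the Whittaker functions of $\pi$ and $\tau$ (Proposition 2.2 of \cite{Soudry1993}, Proposition 2.2 of \cite{JPSS1979}). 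Given that reduction, the only new input in the tempered case is Waldspurger's result \cite[Prop.\ III.2.2]{Waldspurger2003} that $|\chi_j(y)|=|y|_F^{e_j}$ with $e_j\ge 0$; near $y_j=0$ the effective exponent is $j\Re(s)+e_j>0$ for $\Re(s)>0$, and the Schwartz factor handles $y_j\to\infty$. Note also that the majorant you propose, $|W_v(t)|\le C\,\delta_{B_n}^{1/2}(t)\prod_\alpha\phi_\alpha(\alpha(t))$ with $\phi_\alpha$ of Schwartz type, is not quite the correct form---tempered Whittaker functions are only $\delta^{1/2}$ up to logarithms, and what actually gets used is the sign condition $e_j\ge 0$ on the exponents rather than a fully explicit majorant. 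Citing Soudry for the reduction and Waldspurger for the nonnegativity of the $e_j$ would close your argument; without them, it remains a plausible sketch with the key computation missing.
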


\begin{proof}
From the proof of \cite[Proposition 4.2]{Soudry1993}, it suffices to estimate the  integral:
\[
\prod_{j=1}^r\int_{F^\x}|\varphi_j(y_j)\chi_j(y_j)||y_j|_F^{js}d^{\x}y_j,
\]
where each $\varphi_j$ is a Bruhat-Schwartz function on $F$,  and each $\chi_j$ is a character of $F^\x$ depending 
on $\pi$ and $\tau$. In fact, these functions and characters arise from the asymptotic behavior of the Whittaker functions associated 
with $\pi$ and $\tau$ (see \cite[Proposition 2,2]{Soudry1993}, \cite[Proposition 2.2]{JPSS1979}). 

Since both $\pi$ and $\tau$ are tempered, we have
\[
|\chi_j(y_j)|=|y_j|_F^{e_j}
\]
for some $e_j\ge 0$,  by a result of Waldspurger (\cite[Proposition III.2.2]{Waldspurger2003}). The lemma then follows immediately. 
\end{proof}

To state the next lemma, recall that $H_{r,m}$ denotes the conjugates of hyperspecial maximal compact subgroups 
of $\SO_{2r}(F)$ defined by \eqref{E:H}. Suppose that $\tau$ is unramified, and let $v_\tau\in\tau$ be a spherical vector. 
For each $m\ge 0$, let $\xi_{\tau,s}^m\in I_r(\tau,s)$ denote the unique section satisfying: 
\begin{itemize}
\item $\xi^m_{\tau,s}$ is right $\left(K_{n,m}\cap\SO_{2r}(F)\right)$-invariant, and
\item $\xi_{\tau,s}^m(I_{2r})=v_\tau$.
\end{itemize}

Recall that $T_n$ denotes the diagonal torus of $\SO_{2n+1}$. Define $T_r(F)=T_n(F)\cap\SO_{2r}(F)$, which is the diagonal torus 
of $\SO_{2r}(F)$. Now, the lemma can be stated as follows.

\begin{lm}\label{L:van}
Let $v\in\pi^{K_{n,m}}$ and suppose that $W_v$ vanishes identically on $T_r(F)$. Then we have 
$\Psi_r\left(v\ot\xi_{\tau,s}^m\right)=0$.
\end{lm}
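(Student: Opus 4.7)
The plan is to unfold $\Psi_r(v\ot\xi^m_{\tau,s})$ into an integral involving only $W_v|_{T_r(F)}$, so that the hypothesis $W_v|_{T_r}\equiv 0$ immediately forces vanishing. First I would apply the Iwasawa-type decomposition $\SO_{2r}(F) = V_r\cdot T_r(F)\cdot H_{r,m}$, where $V_r := \SO_{2r}(F)\cap U_n(F)$ is the upper triangular maximal unipotent of $\SO_{2r}$; this follows from the hyperspecial Iwasawa $\SO_{2r}(F) = V_r T_r(F) H_{r,e}$ combined with the conjugation \eqref{E:H_r,m} and the fact that $\varpi^{\lambda_r}\in T_r(F)$.

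Applying this in the outer integral and writing $h = v_0 tk$, I drop $k$ from $W_v(\hat{x}h)$ (by right $K_{n,m}$-invariance, using $H_{r,m}\subseteq K_{n,m}$) and from $f_{\xi^m_{\tau,s}}(h)$ (by right $H_{r,m}$-invariance). For the $v_0$-dependence, I write $\hat{x}v_0 = v_0(v_0^{-1}\hat{x}v_0)$, perform a unipotent change of variable in $x$ (Jacobian one), and use that the $V_r$-characters $\psi_{U_n}(v_0)$ from $W_v$ and $\psi^{-1}_{Z_r}(v_0)$ from $f_{\xi^m_{\tau,s}}$ cancel via the compatibility $\psi_{U_n}|_{V_r} = \psi_{Z_r}$ (verified directly from the explicit formula for $\psi_{U_n}$ and the embedding $Z_r\hookrightarrow V_r$). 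The $V_r$-integration becomes a volume factor, leaving
\[
\Psi_r(v\ot\xi^m_{\tau,s}) = C\int_{T_r(F)/T_r(\frak{o})} f_{\xi^m_{\tau,s}}(t)\,\delta_{B_r}^{-1}(t)\,I(t)\,dt,\quad I(t):=\int_{\M_{(n-r)\x r}(F)}W_v(\hat{x}t)\,dx.
\]
Changing $x\mapsto xa^{-1}$ (with $t = m_r(a)$) and using $t^{-1}\hat{x}t = \widehat{xa}$ from \eqref{E:z^mu conjugate}, the inner integral becomes $|\det a|_F^{n-r}\int W_v(t\hat{y})\,dy$. On $\M_{(n-r)\x r}(\frak{o})$, the block description of $K_{n,m}$ shows $\hat{y}\in K_{n,m}$, so $W_v(t\hat{y}) = W_v(t)$ by right invariance; this compact part thus contributes a volume-constant multiple of $W_v(t)$, which vanishes by hypothesis.

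The hard part will be to treat the non-compact part. For $y$ with some entry of negative valuation, I would apply the $SL_2$-decomposition $x_\a(y) = x_{-\a}(y^{-1})\,\a^\vee(y)\,w_\a(1)\,x_{-\a}(y^{-1})$ iteratively to the root elements composing $\hat{y}$ (starting from the entry of smallest valuation), and then push the resulting Weyl and torus factors through using \eqref{E:z^mu conjugate} and \eqref{E:w conjugate}. This should express $W_v(t\hat{y})$ as an additive character in $y$ times $W_v(t\,t(y)\,w(y)\,k(y))$, with $k(y)\in K_{n,m}$ and $w(y)$ a Weyl element; right $K_{n,m}$-invariance then absorbs $k(y)$, and the terms with nontrivial $w(y)$ are expected to integrate to zero by orthogonality of the surviving characters against the shells $\varpi^{-k}\frak{o}^{\x}$ ($k\ge 1$), so that only the trivial-Weyl piece survives and contributes another multiple of $W_v(t)$. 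Combined with the earlier reduction, this would yield $\Psi_r(v\ot\xi^m_{\tau,s}) = \int_{T_r(F)/T_r(\frak{o})} W_v(t)\,\Phi_{\tau,s}(t)\,dt$, whence the hypothesis forces vanishing. The delicate book-keeping of these $SL_2$-reductions across multiple root elements and the precise identification of the surviving characters whose integrated orthogonality yields the vanishing is the principal obstacle.
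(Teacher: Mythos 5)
Your reductions up to the compact part follow the paper's own route: the paper likewise unfolds via the Iwasawa decomposition of $\SO_{2r}(F)$, moves $t$ past $\hat{x}$ (picking up a character $\nu(t)$ from the change of variables), notes that $\stt{\hat{x}\mid x\in\M_{(n-r)\x r}(\frak{o})}\subset K_{n,m}$ so the integral over integral $x$ contributes a volume times $\int_{T_r(F)}W_v(t)f_{\xi^m_{\tau,s}}(t)\nu(t)\delta^{-1}(t)\,dt$, and observes that the case $r=n$ is already finished at this stage. But the step you yourself flag as ``the hard part'' is precisely where your proposal has a genuine gap, and it is not a book-keeping issue. The paper does \emph{not} kill the non-integral region by any integrated cancellation: it invokes the pointwise vanishing statement \cite[Lemma 6.5]{YCheng2022}, which says that for $v\in\pi^{K_{n,m}}$ one has $W_v(t\hat{x})=0$ for \emph{every} $t\in T_r(F)$ as soon as $x\in\M_{(n-r)\x r}(F)\setminus\M_{(n-r)\x r}(\frak{o})$ (the mechanism being $K_{n,m}$-invariance combined with left $(U_n,\psi_{U_n})$-equivariance: one conjugates a suitable unipotent element into $K_{n,m}$ and obtains $\psi(\cdot)W_v(t\hat{x})=W_v(t\hat{x})$ with a nontrivial character value). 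Your substitute --- iterated $\SL_2$ decompositions of the root elements in $\hat{y}$, followed by an ``expected'' orthogonality of characters over shells $\varpi^{-k}\frak{o}^{\x}$ --- is not carried out, and it is not clear it can be: after the Weyl-element manipulations you land on values of $W_v$ at points of the form $t\,t(y)\,w(y)$ lying well outside the torus, about which the hypothesis $W_v|_{T_r(F)}\equiv 0$ says nothing, and you give no argument that the surviving additive characters are nontrivial on the relevant shells uniformly in the remaining variables. Without the pointwise vanishing input (or a complete execution of your scheme), the non-compact contribution is simply unaccounted for, so the proof is incomplete at its decisive step.

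Two smaller points. First, the outer integration in $\Psi_r$ is over the quotient $\left(\SO_{2r}(F)\cap U_n(F)\right)\backslash\SO_{2r}(F)$, so there is no $V_r$-integration to turn into a ``volume factor'' (and $V_r$ is not compact); the character compatibility you verify is exactly what makes the integrand well defined on the quotient, after which the Iwasawa decomposition reduces matters to $T_r(F)\x H_{r,m}$ directly. Second, the identity you derive only holds in the region of absolute convergence; as in the paper, you should fix $s_0$ with $\Re(s_0)\gg 0$ (or use \lmref{L:converge} in the tempered case), prove vanishing there, and then conclude $\Psi_r(v\ot\xi^m_{\tau,s})\equiv 0$ for all $s$ since $\Psi_r$ is a rational function of $q^{-s}$ determined by its values on that region.
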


\begin{proof}
Fix $s_0\in\bbC$ such that $\Psi_r(v\ot\xi^m_{\tau,s_0})$ converges absolutely. 
By the Iwasawa decomposition of $\SO_{2r}(F)$, we have 
\[
\Psi_r\left(v\ot\xi_{\tau,s_0}^m\right)
=
\int_{T_r(F)}\int_{\M_{(n-r)\x r}(F)} W_v(\hat{x}t)f_{\xi_{\tau,s_0}^m}(t)\delta^{-1}(t)dxdt,
\]
where $\delta$ denotes the modulus character of the upper triangular Borel subgroup of $\SO_{2r}$. In particular, this already implies
that $\Psi_n(v\ot\xi^m_{\tau,s_0})=0$. By uniqueness of meromorphic continuation, we conclude that $\Psi_n(v\ot\xi^m_{\tau,s})=0$.

Suppose that $r<n$. Then since $T_r(F)$ normalizes the subgroup $\stt{\hat{x}\mid x\in\M_{(n-r)\x r}(F)}$ of $\SO_{2n+1}(F)$, we 
may change variables to obtain
\[
\Psi_r\left(v\ot\xi_{\tau,s_0}^m\right)
=
\int_{T_r(F)}\int_{\M_{(n-r)\x r}(F)} W_v(t\hat{x})f_{\xi_{\tau,s_0}^m}(t)\nu(t)\delta^{-1}(t)dxdt,
\]
where $\nu$ is the character on $T_r(F)$ arising from this change the variables. Now, we apply \cite[Lemma 6.5]{YCheng2022}, 
which asserts that 
\[
W_v(t\hat{x})=0
\]
for all $t\in T_r(F)$, whenever $x\in\M_{(n-r)\x r}(F)\setminus\M_{(n-r)\x r}(\frak{o})$. Together with the fact that 
\[
\stt{\hat{x}\mid x\in\M_{(n-r)\x r}(\frak{o})}
\subset
K_{n,m},
\]
we conclude 
\begin{align*}
\Psi_r\left(v\ot\xi_{\tau,s_0}^m\right)
&=
\int_{T_r(F)}\int_{\M_{(n-r)\x r}(F)} W_v(t\hat{x})f_{\xi_{\tau,s_0}^m}(t)\nu(t)\delta^{-1}(t)dxdt\\
&=
\int_{T_r(F)}\int_{\M_{(n-r)\x r}(\frak{o})} W_v(t\hat{x})f_{\xi_{\tau,s_0}^m}(t)\nu(t)\delta^{-1}(t)dxdt\\
&=
c\int_{T_r(F)}W_v(t)f_{\xi_{\tau,s_0}^m}(t)\nu(t)\delta^{-1}(t)dt=0,
\end{align*}
where $c$ is the volume of $\M_{(n-r)\x r}(\frak{o})$. This proves the lemma.
\end{proof}

Now we are ready to prove the following key lemma.

\begin{lm}\label{L:inj}
Let $v\in\pi^{K_{n,m}}$. Suppose that $\pi$ is tempered and $W_v$ vanishes identically on $T_n(F)$. Then we have $v=0$.
\end{lm}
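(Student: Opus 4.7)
The plan is to combine a refinement of the Iwasawa-decomposition argument used in \lmref{L:van} with the uniqueness of Gross-Prasad periods for the Gelfand pair $(\SO_{2n+1}(F),\SO_{2n}(F))$ proved in \cite{AGRS2010}. The first move is to promote the hypothesis $W_v|_{T_n(F)}\equiv 0$ to the stronger statement that $W_v$ vanishes on the entire subgroup $\SO_{2n}(F)$, after which the Rankin-Selberg machinery of \S\ref{SS:RS int} together with Gross-Prasad multiplicity one will be used to force $v = 0$.

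For the first step, let $V_{2n} := \SO_{2n}\cap U_n$ be the maximal unipotent subgroup of $\SO_{2n}$. By \eqref{E:H}, the hyperspecial maximal compact $H_{n,m}$ of $\SO_{2n}(F)$ is contained in $K_{n,m}$, so $W_v$ is right $H_{n,m}$-invariant. Writing any $g\in\SO_{2n}(F)$ as $g=utk$ via the Iwasawa decomposition $\SO_{2n}(F) = V_{2n}(F)\,(T_n(F)\cap\SO_{2n}(F))\,H_{n,m}$ and invoking the left $(U_n,\psi_{U_n})$-equivariance of Whittaker functions, one finds
\[
W_v(g) = W_v(utk) = \psi_{U_n}(u)\,W_v(t) = 0,
\]
so that $W_v$ vanishes identically on $\SO_{2n}(F)$.

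In the second step, for any irreducible tempered generic representation $\tau$ of $\GL_n(F)$ and any section $\xi_s\in I_n(\tau,s)$, \lmref{L:converge} guarantees absolute convergence of $\Psi_n(v\otimes\xi_s)$ for $\Re(s)>0$, and the first step immediately forces $\Psi_n(v\otimes\xi_s) = 0$. The standard $\SO_{2n}(F)$-equivariance $\Psi_n(\pi(g)v\otimes\rho_{\tau,s}(g)\xi_s) = \Psi_n(v\otimes\xi_s)$ then propagates this vanishing to the entire $\SO_{2n}(F)$-submodule generated by $v$. Choosing a unitary $s_0$ (say $s_0 = 1/2$) at which $\rho_{\tau,s_0}$ is irreducible tempered and $\Psi_n$ is not identically zero on $\pi\otimes\rho_{\tau,s_0}$ (a generic condition on $\tau$), the Rankin-Selberg form becomes, by \cite{AGRS2010}, the unique element of $\Hom_{\SO_{2n}(F)}(\pi\otimes\rho_{\tau,s_0},\bbC)$ up to scalar. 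Letting $\tau$ range over the tempered dual of $\GL_n(F)$ and combining the resulting family of invariant functionals with Harish-Chandra's Plancherel decomposition of the tempered restriction $\pi|_{\SO_{2n}(F)}$ would separate the nonzero $K_{n,m}$-fixed vectors in $\pi$, yielding $v = 0$.

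\textbf{Main obstacle.} The delicate step is the last one: deducing $v = 0$ from the vanishing of this family of Rankin-Selberg bilinear pairings. Gross-Prasad uniqueness produces at most one $\SO_{2n}(F)$-invariant form on $\pi\otimes\sigma$ for each irreducible $\sigma$, but translating the vanishing of these forms on $v$ into an outright separation statement requires a spectral density argument over the tempered dual of $\SO_{2n}(F)$, together with the Gelfand-pair multiplicity one. The temperedness hypothesis of the lemma is essential here, both to guarantee absolute convergence of $\Psi_n$ and to invoke the Plancherel formula needed in the final conclusion.
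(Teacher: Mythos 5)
Your first two steps are sound: the Iwasawa decomposition for $\SO_{2n}(F)$ with respect to the maximal compact $H_{n,m}$, combined with $H_{n,m}\subset K_{n,m}$ and the left $\psi_{U_n}$-equivariance, does force $W_v$ to vanish on all of $\SO_{2n}(F)$, hence $\Psi_n(v\otimes\xi_s)=0$ for every section. You also correctly identify the two key external inputs: tempered convergence (\lmref{L:converge}) and AGRS multiplicity one. However, the final step, which you yourself flag as the main obstacle, is a genuine gap, not a deferral. Asserting that a ``spectral density argument over the tempered dual'' would separate $v$ from $0$ is precisely what needs proof, and you neither supply it nor cite a result that does. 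The paper's proof supplies exactly this missing ingredient via a lemma of Gan--Savin (\cite[Lemma 12.5]{GanSavin2012}): since $f_v(1)\neq 0$ and $\pi$ is tempered, there is a tempered representation $\pi'$ of $\SO_{2n}(F)$ and a matrix coefficient $f_{v'}$ with $\int_{\SO_{2n}(F)}f_v\overline{f_{v'}}\,dh\neq 0$. This replaces your hand-waved Plancherel argument with a concrete nonvanishing period.

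Two further points are skated over. First, you cannot choose $\tau$ generically so that $\rho_{\tau,1/2}$ is irreducible and $\Psi_n$ is nonzero on it; the Gan--Savin lemma hands you a \emph{specific} $\pi'$, and the crucial observation (absent from your write-up) is that $v'$ may be taken $H_{n,m}$-fixed, which forces $\pi'$ to be $H_{n,e}$-unramified and therefore a summand of $I(\tau,\tfrac12)$ for some \emph{unramified} tempered $\tau$ of $\GL_n(F)$. Second, $I(\tau,\tfrac12)$ may well be reducible; in that case one needs Soudry's nonvanishing result plus the local Gross--Prasad conjecture (multiplicity one across the $L$-packet $\{\tilde\pi',\pi''\}$) to guarantee that $\Psi_n$ restricts nontrivially to $\pi\otimes\tilde\pi'$ rather than landing entirely on $\pi\otimes\pi''$. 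Without the Gan--Savin lemma, the unramifiedness of $\pi'$, and the packet argument, your proposal does not close.
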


\begin{proof}
Suppose, for the sake of contradiction, that $W_v$ vanishes identically on $T_n(F)$ but $v\neq 0$.
Consider the matrix coefficient
\[
f_v(g)=\langle\pi(g)v,v\rangle_\pi
\]
associated to $v$, where $\langle\cdot,\cdot\rangle_\pi$ is the $\SO_{2n+1}(F)$-equivariant Hermitian pairing on $\pi$.

Since $f_v(I_{2n+1})\ne 0$, a lemma of Gan-Savin (\cite[Lemma 12.5]{GanSavin2012}) implies that there exist
\begin{itemize}
\item a tempered representation $\pi'$ of the split group $\SO_{2n}(F)$, and
\item a matrix coefficient $f_{v'}$ associated to $v'\in\pi'$, 
\end{itemize}
such that 
\begin{equation}\label{E:non-van}
\int_{\SO_{2n}(F)} f_v(h)\overline{f_{v'}(h)}dh\ne 0.
\end{equation}

We may assume that $v'$ is fixed by $H_{n,m}$, which implies that $\pi'$ is $H_{n,e}$-unramified, where $e=0,1$ satisfies 
$m\equiv e\pmod 2$. Consequently, we may further assume 
\[
\tilde{\pi}'\subset I\left(\tau,\frac{1}{2}\right)
\]
is an irreducible summand, and
\[
v'=\xi^m_{\tau,\frac{1}{2}},
\]
where $\tilde{\pi}'$ denotes the contragredient of $\pi'$ and $\tau$ is an irreducible unramified tempered representation of 
$\GL_n(F)$. Observe that $\tau$ is necessarily generic.

By \lmref{L:converge}, the map 
\begin{equation}\label{E:RS map}
w\ot\xi_{\frac{1}{2}}\mapsto\Psi_n\left(w\ot\xi_{\frac{1}{2}}\right)
\end{equation}
is well-defined. If $\xi_{\frac{1}{2}}\in\tilde{\pi}'$, then it clear belongs to ${\rm Hom}_{\SO_{2n}(F)}(\pi\boxtimes\tilde{\pi}',\bbC)$. 
Assume for a moment that this map, when restricted to $\tilde{\pi}'$, is non-zero. Then since the map
\[
w\ot w'\mapsto\int_{\SO_{2n}(F)}\langle\pi(h)w,v\rangle_\pi\overline{\langle\pi'(h)w',v'\rangle_\pi'}dh
\]
also yields a non-zero element in the same space by \eqref{E:non-van}, and  the space 
${\rm Hom}_{\SO_{2n}(F)}(\pi\boxtimes\tilde{\pi}',\bbC)$ is at most one-dimensional (\cite{AGRS2010}), it follows that  
\[
\int_{\SO_{2n}(F)} f_v(h)\overline{f_{v'}(h)}dh
=
c\,\Psi_n\left(v\ot\xi^m_{\tau,\frac{1}{2}}\right),
\]
for some non-zero constant $c$. By \lmref{L:van} and \eqref{E:non-van}, we obtain the desired contradiction. 

It remains to show that the map \eqref{E:RS map} is non-zero on $\pi\ot\tilde{\pi}'$. By \cite[Proposition 6.1]{Soudry1993}, there exist 
$w\in\pi$ and a section $\xi_s\in I(\tau,s)$ such that 
\[
\Psi_n(w\ot\xi_s)\equiv 1.
\]
In particular, the map \eqref{E:RS map} is non-zero on $I\left(\tau,\frac{1}{2}\right)$, and if $I\left(\tau,\frac{1}{2}\right)$ is irreducible, 
then we are done. Suppose instead that $I\left(\tau,\frac{1}{2}\right)$ is reducible. Then by 
\cite[Theorem 6.8]{Goldberg1994}, 
\[
I\left(\tau,\frac{1}{2}\right)\simeq\tilde{\pi}'\oplus\pi''
\]
where $\pi''$ is an irreducible $H_{n,e'}$-unramified tempered representation of $\SO_{2n}(F)$ with 
$e'=0,1$ and $e'\ne e$. Since $\tilde{\pi}'$ and $\pi''$ lie in the same (generic) $L$-packet, the Gross-Prasad conjecture 
(\cite[Conjecture 8.6]{GrossPrasad1992}, now a Theorem of Waldspurger) implies 
\[
\dim_\bbC{\rm Hom}_{\SO_{2n}(F)}(\pi\ot\tilde{\pi}',\bbC)
+
\dim_\bbC{\rm Hom}_{\SO_{2n}(F)}(\pi\ot\pi'',\bbC)\le 1.
\]
Since ${\rm Hom}_{\SO_{2n}(F)}(\pi\ot\tilde{\pi}',\bbC)\ne 0$, we conclude that 
\[
{\rm Hom}_{\SO_{2n}(F)}(\pi\ot\pi'',\bbC)=0.
\]
Consequently, the map \eqref{E:RS map} vanishes on $\pi\ot\pi''$, and the proof is complete.
\end{proof}

We conclude this subsection with the following proposition, which plays a key role in the proof of the tempered case. Let 
$\cS_r$ be the $\bbC$-algebra of symmetric polynomials in the $r$ variables $X_1,\ldots, X_r$. Recall that the $\epsilon$-factor
$\epsilon(s,\phi_\pi,\psi)$ associated to the $L$-parameter $\phi_\pi$ of $\pi$ is of the form
\[
\epsilon(s,\phi_\pi,\psi)
=
\e_\pi q^{-c_\pi\left(s-\frac{1}{2}\right)}
\]
for some $\e_\pi=\pm 1$ and $c_\pi\ge 0$.

\begin{prop}\label{P:key}
For each $1\le r\le n$ and $m\ge 0$, there exists a linear map 
\[
\Xi_{r,m}:\pi^{K_{n,m}}\longto\cS_r,
\quad
v\mapsto\Xi_{r,m}(v;X_1,\ldots, X_r),
\]
satisfying the following properties.

\begin{itemize}
\item[(1)] 
If $\tau$ is an irreducible unramified generic representation of $\GL_r(F)$ with the Satake parameters $z_1,\ldots, z_r$, then
\[
\Xi_{r,m}\left(v;q^{-s+\frac{1}{2}}z_1,\ldots, q^{-s+\frac{1}{2}}z_r\right)
=
\frac{L\left(2s,\phi_\tau,\wedge^2\right)\Psi_r\left(v\ot\xi^m_{\tau,s}\right)}{L(s,\phi_\pi\ot\phi_\tau)},
\]
where $\phi_\tau$ is the $L$-parameter of $\tau$.
\item[(2)]
The functional equation
\[
\Xi_{r,m}\left(w_{r,m}v; X_1^{-1},\ldots, X_r^{-1}\right)
=
\e_\pi^r\left(X_1\cdots X_r\right)^{c_\pi-m}
\Xi_{r,m}\left(v;X_1,\ldots, X_r\right)
\]
holds.
\item[(3)]
The relation
\[
\Xi_{r,m}(v;X_1,\ldots,X_{r-1},0)=\Xi_{r-1,m}(v;X_1,\ldots, X_{r-1})
\]
holds for $r\ge 2$.

\item[(4)]
The kernel of $\Xi_{r,m}$ is given by
\[
\ker\left(\Xi_{r,m}\right)
=
\stt{v\in\pi^{K_{n,m}}\mid\text{$W_v$ vanishes identically on $T_r(F)$}}.
\]
\end{itemize}
\end{prop}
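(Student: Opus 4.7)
\medskip

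\noindent\textbf{Proof proposal.}
The plan is to define $\Xi_{r,m}$ by extracting a symmetric polynomial from the local Rankin--Selberg integrals via Casselman--Shalika. Let $\tau$ range over irreducible unramified generic representations of $\GL_r(F)$ with Satake parameters $z_1,\ldots,z_r$. Since $\xi^m_{\tau,s}$ is right-invariant under $H_{r,m}=K_{n,m}\cap\SO_{2r}(F)$, the Iwasawa decomposition of $\SO_{2r}(F)$ reduces $\Psi_r(v\otimes\xi^m_{\tau,s})$ to an integral over $T_r(F)/T_r(\frak{o})$ and $\M_{(n-r)\times r}(F)$. On the torus, the $\tau$-side contributes the unramified Whittaker value, which by the Casselman--Shalika formula is a symmetric Laurent polynomial in the $z_i$ weighted by powers of $q^{-s+1/2}$; crucially, it only contributes for dominant torus characters, making the resulting sum a Laurent series in the combined variables $X_i=q^{-s+1/2}z_i$. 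The proposed construction is to define $\Xi_{r,m}(v;X_1,\ldots,X_r)\in\cS_r$ as the result of the quotient $L(2s,\phi_\tau,\wedge^2)\Psi_r(v\otimes\xi^m_{\tau,s})/L(s,\phi_\pi\otimes\phi_\tau)$ re-expressed in the $X_i$'s. Property $(1)$ then holds by construction, and property $(4)$ is immediate from \lmref{L:van}: if $W_v$ vanishes on $T_r(F)$, the integrand over the torus vanishes for every $\tau$, so all coefficients of the symmetric polynomial vanish.

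For property $(2)$, I would apply the local functional equation of the Rankin--Selberg integral, which relates $s$ to $1-s$ via the product $\gamma(s,\phi_\pi\otimes\phi_\tau,\psi)=\epsilon(s,\phi_\pi\otimes\phi_\tau,\psi)L(1-s,\phi_\pi^\vee\otimes\phi_\tau^\vee)/L(s,\phi_\pi\otimes\phi_\tau)$, together with the multiplicativity $\epsilon(s,\phi_\pi\otimes\phi_\tau,\psi)=\e_\pi^r\,q^{-rc_\pi(s-1/2)}\cdot(\text{factors from }\tau)$ when $\tau$ is unramified; the action of $w_{r,m}$ on $v$ accounts for the intertwining between $\tau_s$ and $\tau^\vee_{1-s}$, and when the normalizations are converted back into the $X_i$, the claimed functional equation with factor $\e_\pi^r(X_1\cdots X_r)^{c_\pi-m}$ emerges. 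For property $(3)$, the strategy is to consider a degenerate family of $\tau$ obtained by induction from $\tau'\boxtimes\chi$ on $\GL_{r-1}\times\GL_1$; specializing $X_r\to 0$ (equivalently, sending the last Satake parameter to zero) kills the contributions from torus elements with nontrivial last coordinate in the Casselman--Shalika expansion, and the remaining integral matches the $\GL_{r-1}$-integral for $\tau'$, thereby identifying the specialization with $\Xi_{r-1,m}(v;X_1,\ldots,X_{r-1})$.

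The main obstacle is establishing that the quotient in $(1)$ is genuinely a symmetric polynomial in $X_1,\ldots,X_r$, rather than a rational function. This requires a gcd-type argument identifying $L(s,\phi_\pi\otimes\phi_\tau)$ with the greatest common divisor of the family $\{\Psi_r(v\otimes\xi_{\tau,s})\}_{v,\xi}$, and a further check that the $\wedge^2$ $L$-factor in the numerator absorbs the extra denominator coming from the normalized Siegel section (the normalization issue is already visible in the formula $\delta_{Q_r}(m_r(a))=|\det a|_F^{r-1}$). Once polynomiality is secured, symmetry in the $X_i$ follows from the uniqueness of the unramified Whittaker functional up to scalar and the $\frak{S}_r$-invariance of the Satake parameters, and the remaining properties fall out by the outline above.
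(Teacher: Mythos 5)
The paper's own proof of this proposition is a citation: it is exactly \cite[Proposition 6.7]{YCheng2022}, with two cosmetic adjustments --- (i) the observation that restricting from $\pi^{H_{n,m}}$ to $\pi^{K_{n,m}}$ lands you in ordinary (not Laurent) symmetric polynomials, which is \cite[Equation (7.2)]{YCheng2022}, and (ii) replacing the element $u_{n,r,m}$ of loc.~cit.\ by $w_{r,m}$, which is harmless since $w_{r,m}^{-1}u_{n,r,m}\in K_{n,m}$. You instead attempt to reprove the cited result from scratch, which is a genuinely different route.

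Your outline does capture the overall strategy that underlies the cited result (unfold the Rankin--Selberg integral by Iwasawa decomposition, feed in the Casselman--Shalika formula on the $\tau$ side, extract a symmetric object in the Satake parameters, and get the functional equation from the local gamma factor). But as written it has concrete gaps. First, for property (4) you only establish the containment $\{v : W_v|_{T_r(F)}\equiv 0\}\subseteq\ker\Xi_{r,m}$; \lmref{L:van} gives nothing in the other direction, and the equality in the statement requires showing conversely that vanishing of all coefficients of $\Xi_{r,m}(v)$ forces $W_v$ to vanish on $T_r(F)$, which is a separate argument (essentially inverting the Casselman--Shalika expansion). Second, and as you yourself flag, the polynomiality --- that the quotient in (1) lands in $\cS_r$ rather than merely in a ring of rational functions --- is the crux of the whole construction; ``a gcd-type argument'' and ``a further check'' is not a proof, and the normalization bookkeeping around $L(2s,\phi_\tau,\wedge^2)$ and $\delta_{Q_r}$ is exactly where the work is. Third, property (2) is asserted to ``emerge'' after converting normalizations; the precise exponent $c_\pi-m$ and the scalar $\e_\pi^r$ depend on the explicit intertwining computation with $w_{r,m}$, which you do not carry out. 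In short, your sketch reproduces the shape of the argument in \cite{YCheng2022} but not its substance; the paper itself sidesteps all of this by citing that reference and only verifying the two small translation lemmas above.
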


\begin{proof}
This is \cite[Proposition 6.7]{YCheng2022}. We note that $\cS_r$ in $loc.$ $cit.$ denotes the $\bbC$-algebra of symmetric 
polynomials in $X^{\pm}_1,\ldots, X_r^{\pm}$. The reason is that, when $r=n$, the spaces considered in $loc.$ $cit.$ are
\[
\pi^{H_{n,m}}
\]
rather than $\pi^{K_{n,m}}$ in our setting. However, one can show that (see \cite[Equation (7.2)]{YCheng2022}) the image of the maps 
constructed in $loc.$ $cit.$, upon restriction to $\pi^{K_{n,m}}$, lies inside the $\bbC$-algebra of symmetric polynomials in 
$X_1,\ldots, X_r$.

Another point worth noting is that the element $u_{n,r,m}$ appearing in the functional equation in $loc.$ $cit.$ is different from 
$w_{r,m}$ in our setting. Nevertheless, one can check easily that $w_{r,m}^{-1}u_{n,r,m}\in K_{n,m}$, so the functional equation can
be written in the form stated here.
\end{proof}

\subsection{Proof of the tempered case}\label{SS:proof}
Let $\pi$ be an irreducible generic tempered representation of $\SO_{2n+1}(F)$. Our goal is to prove the following:
\begin{itemize}
\item[(1)] the subspaces satisfy $\pi^{K_{n,m}}=0$ for $0\le m<c_\pi$ and $\dim_\bbC\pi^{K_{n,c_\pi}}\le 1$;
\item[(2)] if $\pi^{K_{n,c_\pi}}\ne 0$, then the action of $J_{n,c_\pi}/K_{n,c_\pi}$ on $\pi^{K_{n,c_\pi}}$ is given by the scalar $\e_\pi$;
\item[(3)] if $\pi^{K_{n,c_\pi}}\ne 0$, then the natural pairing of one-dimensional spaces 
\[
{\rm Hom}_{K_{n,c_\pi}}\left(1,\pi\right)\,\x\,{\rm Hom}_{U_n}\left(\pi,\psi_{U_n}\right)\longto\bbC,
\]
is non-degenerate.
\end{itemize}

To this end, we apply the linear maps $\Xi_{r,m}$ from \propref{P:key}. We begin with the proof of $(1)$. By the functional equation
\[
\Xi_{n,m}\left(v;X_1,\ldots, X_r\right)
=
\e_\pi^n\left(X_1\cdots X_r\right)^{m-c_\pi}
\Xi_{n,m}\left(w_{r,m}v; X_1^{-1},\ldots, X_r^{-1}\right),
\]
and the fact that the image of $\Xi_{n,m}$ lies in $\cS_r$, we immediately deduce
\begin{itemize}
\item $\Xi_{n,m}(v;X_1,\ldots, X_n)=0$ for $0\le m<c_\pi$, 
\item $\Xi_{n, c_\pi}(v;X_1,\ldots, X_n)\in\bbC$.
\end{itemize}
Since the maps $\Xi_{n,m}$ are injective by \propref{P:key} (4) and \lmref{L:inj}, it follows that 
$\pi^{K_{n,m}}=0$ for $0\le m<c_\pi$ and $\dim_\bbC\pi^{K_{n,c_\pi}}\le 1$. This verifies $(1)$.

To prove $(2)$, observe that when $c_\pi=0$, we have $J_{n,0}=K_{n,0}$ and $\e_\pi=1$, so the assertion is immediate. 
In fact, we have already noted that the conjecture holds for unramified $\pi$. So suppose that $c_\pi>0$, and the action of 
$J_{n,c_\pi}/K_{n,c_\pi}$ on $\pi^{K_{n,c_\pi}}$ is given by the scalar $\e=\pm 1$. Since 
$w_{1,c_\pi}\in J_{n,c_\pi}\setminus K_{n,c_\pi}$, this is equivalent to 
\[
\pi(w_{1,c_\pi})v=\e v,
\]
for any non-zero vector $v\in\pi^{K_{n,c_\pi}}$. 

From the proof of $(1)$, we know that $\Xi_{n,c_\pi}(v;X_1,\ldots, X_n)=c_0$ is a non-zero constant. It then follows from
\propref{P:key} $(3)$ that
\begin{equation}\label{E:c}
\Xi_{1,c_\pi}(v;X_1)=\Xi_{n,c_\pi}(v;X_1, \underbrace{0,\ldots,0}_{n-1})=c_0\ne 0.
\end{equation}
Applying the functional equation 
\[
\Xi_{1,c_\pi}(w_{1,c_\pi}v;X_1^{-1})=\e_\pi\Xi_{1,c_\pi}(v;X_1),
\]
we conclude that $\e=\e_\pi$. This shows $(2)$.

By \propref{P:key} $(1)$ and \eqref{E:c}, we have
\[
c_0=\Xi_{1,c_\pi}\left(v;q^{-s+\frac{1}{2}}\right)
=\frac{Z(s,v)}{L(s,\phi_\pi)}
=\lambda_{\pi,\psi}(v)+c_1q^{-s}+c_2q^{-2s}+\cdots
\]
after expanding the right-hand side as a power series in $q^{-s}$. Here 
\[
Z(s,v)
=
\int_{F^\x}\int_{\M_{(n-1)\x 1}(F)}
W_v(\hat{x}\epsilon_1^*(y))|y|_F^{s-\frac{1}{2}}dxd^{\x} y
\]
is the Rankin-Selberg integral attached to $v$ for $r=1$ and $\tau$ is the trivial character of $F^\x$ 
(see \cite[Remark in \S 4.2]{YCheng2022}). We note that such an 
expansion can be derived from \cite[Lemma 6.6]{YCheng2022}. Now the assertion $(3)$ follows immediately. This completes the 
proof of the tempered case, and hence the proof of \thmref{T:main}.\qed


\begin{thebibliography}{999}

\bibitem[AGRS10]{AGRS2010}
A.~Aizenbud, D.~Gourevitch, S.~Rallis, and G.~Schiffmann.
\newblock Multiplicity one theorems.
\newblock {\em Annals of Mathematics}, 172(2):1407--1434, 2010.

\bibitem[AL70]{AtkinLehner1970}
A.O.L. Atkin and J.~Lehner.
\newblock {Hecke operators on $\Gamma_0(m)$}.
\newblock {\em Mathematische Annalen}, 185:134--160, 1970.

\bibitem[Art13]{Arthur2013}
J.~Arthur.
\newblock {\em {The endoscopic classification of representations: Orthogonal
  and symplectic groups}}, volume~61 of {\em American Mathematical Society
  Colloquium Publications}.
\newblock American Mathematical Society, Providence, RI, 2013.

\bibitem[BT72]{BruhatTits1972}
F.~Bruhat and J.~Tits.
\newblock {Groupes r\'eductifs sur un corps local}.
\newblock {\em {Publications Math\'ematiques de l'IH\'ES}}, 41:5--251, 1972.

\bibitem[Cas73]{Casselman1973}
W.~Casselman.
\newblock {On some results of Atkin and Lehner}.
\newblock {\em Mathematische Annalen}, 201:301--314, 1973.

\bibitem[Che22]{YCheng2022}
Y.~Cheng.
\newblock {Rankin-Selberg integrals for ${\rm SO}_{2n+1}\times{\rm GL}_r$
  attached to Newforms and Oldforms}.
\newblock {\em Mathematische Zeitschrift}, 301(4):3973--4014, 2022.

\bibitem[CS98]{CasselmanShahidi1998}
W.~Casselman and F.~Shahidi.
\newblock On irreducibility of standard modules for generic representations.
\newblock {\em Ann. Scient. Ecole Norm. Sup.(4)}, 31(4):561--589, 1998.

\bibitem[Gin90]{Ginzburg1990}
D.~Ginzburg.
\newblock {$L$-functions for ${\rm SO}_n\times{\rm GL}_k$}.
\newblock {\em Journal fur die reine und angewandte Mathematik}, 405:156--180,
  1990.

\bibitem[Gol94]{Goldberg1994}
D.~Goldberg.
\newblock {Reducibility of Induced Representations for SP(2N) and SO(N)}.
\newblock {\em American Journal of Mathematics}, 116(5):1101--1151, 1994.

\bibitem[GP92]{GrossPrasad1992}
B.~Gross and D.~Prasad.
\newblock {On the decomposition of a representation of ${\rm SO}_n$ when
  restricted to ${\rm SO}_{n-1}$}.
\newblock {\em Canadian Journal of Mathematics}, 44(5):974--1002, 1992.

\bibitem[Gro15]{Gross2015}
B.~Gross.
\newblock {On the Langlands correspondence for symplectic motives}, 2015.

\bibitem[GS12]{GanSavin2012}
W.~T. Gan and G.~Savin.
\newblock {Representations of metaplectic groups I: epsilon dichotomy and local
  Langlands correspondence.}
\newblock {\em Compositio Mathematica}, 148(6):1655--1594, 2012.

\bibitem[IM65]{IwahoriMatsumoto1965}
N.~Iwahori and H.~Matsumoto.
\newblock {On some Bruhat decomposition and the structure of the Hecek rings of
  $p$-adic Chevalley groups}.
\newblock {\em {Publications Math\'ematiques de l'IH\'ES}}, 25:5--48, 1965.

\bibitem[Jac12]{Jacquet2012}
H.~Jacquet.
\newblock {A correction to Conducteur des repr{\'e}sentations du groupe
  lin{\'e}aire}.
\newblock {\em Pacific Journal of Mathematics}, 260(2):515--525, 2012.

\bibitem[JPSS79]{JPSS1979}
H.~Jacquet, I.~I. Piatetski-Shapiro, and J.~Shalika.
\newblock {Automorphic fofms on ${\rm GL}(3)$ I}.
\newblock {\em Annals of Mathematics}, 109(1):169--212, 1979.

\bibitem[JPSS81]{JPSS1981}
H.~Jacquet, I.~Piatetski-Shapiro, and J.~Shalika.
\newblock {Conducteur des repr{\'e}sentations du groupe lin{\'e}aire}.
\newblock {\em Mathematische Annalen}, 256(2):199--214, 1981.

\bibitem[JPSS83]{JPSS1983}
H.~Jacquet, I.~Piatetski-Shapiro, and J.~Shalika.
\newblock {Rankin-Selberg convolutions}.
\newblock {\em American Journal of Mathematics}, 105(2):367--464, 1983.

\bibitem[JS03]{JiangSoudry2003}
D.~Jiang and D.~Soudry.
\newblock {The local converse theorem for ${\rm SO}(2n+1)$ and applications}.
\newblock {\em Annals of Mathematics}, 157(3):743--806, 2003.

\bibitem[JS04]{JiangSoudry2004}
D.~Jiang and D.~Soudry.
\newblock {Generic representations and local Langlands recprocity law for
  $p$-adic ${\rm SO}_{2n+1}$}.
\newblock In {\em Contributions to automorphic forms, geometry, and number
  theory}, pages 457--519. Johns Hopkins Univ. Press, Baltimore, MD, 2004.

\bibitem[KP23]{KalethaPrasad2023}
T.~Kaletha and G.~Prasad.
\newblock {\em Bruhat-Tits Theory: A New Approach}.
\newblock New Mathematical Monographs. Cambridge University Press, 2023.


\bibitem[Mat13]{Matringe2013}
N.~Matringe.
\newblock {Essential Whittaker functions for $GL(n)$}.
\newblock {\em Documenta Mathematica}, 18:1191--1214, 2013.

\bibitem[Mui01]{Muic2001}
G.~Muic.
\newblock {A proof of Casselman-Shahidi's conjecture quasi-split classical
  groups}.
\newblock {\em Canadian Mathematical Bulletin}, 44(4):298--312, 2001.

\bibitem[Rab05]{Rabinoff2005}
J.~Rabinoff.
\newblock {\em {The Bruhat-Tits building of a $p$-adic Chevalley group and an
  application to representation theory}}.
\newblock Senior thesis, Harvard University, Cambridge, Massachusetts, 2005.

\bibitem[RS07]{RobertsSchmidt2007}
B.~Roberts and R.~Schmidt.
\newblock {\em {Local newforms for ${\rm GSp}(4)$}}.
\newblock Number 1918 in Lecture Notes in Mathematics. Springer-Verlag, Berlin
  Heidelberg, 2007.

\bibitem[Sch02]{Schmidt2002}
R.~Schmidt.
\newblock {Some remarks on local newforms for ${\rm GL}(2)$}.
\newblock {\em Journal of Ramanujan Mathematical Society}, 17(2):115--147,
  2002.

\bibitem[Sha18]{Shahabi2018}
M.~Shahabi.
\newblock {\em Smooth integral models for certain congruence subgroups of odd
  spin groups}.
\newblock PhD thesis, University of Calgary, 2018.

\bibitem[Sil78]{Silberger1978}
A.~Silberger.
\newblock {The Langlands quotient theorem for $p$-adic groups}.
\newblock {\em Mathematische Annalen}, 236:95--104, 1978.

\bibitem[Sou93]{Soudry1993}
D.~Soudry.
\newblock {\em {Rankin-Selberg convolution for ${\rm SO}_{2\ell+1}\times{\rm
  GL}_n$ : Local theory}}, volume 105 of {\em Memoirs of the American
  Mathematical Society}.
\newblock American Mathematical Society, 1993.

\bibitem[Sou00]{Soudry2000}
D.~Soudry.
\newblock {Full multiplicativity of gamma factors for ${\rm
  SO}_{2\ell+1}\times{\rm GL}_n$}.
\newblock In {\em {Proceedings of the Conference on $p$-adic Aspects of the
  Theory of Automorphic Representations (Jerusalem, 1998)}}, volume 120, pages
  511--561. Israel Journal of Mathemtaics, Part B, 2000.

\bibitem[Tat79]{Tate1979}
J.~Tate.
\newblock Number theoretic background.
\newblock In {\em {Automorphic forms, representations, and $L$-functions, Part
  2}}, volume~33, pages 3--26. Preceedings of Symposium in Pure Mathematics,
  Oregon State University, 1979.

\bibitem[Tsa13]{Tsai2013}
P-Y Tsai.
\newblock {\em On newforms for split special odd orthogonal groups}.
\newblock PhD thesis, Harvard University, Cambridge, Massachusetts, 2013.

\bibitem[Tsa16]{Tsai2016}
P-Y Tsai.
\newblock Newforms for odd orthogonal groups.
\newblock {\em Journal of Number Theory}, 161:75--87, 2016.

\bibitem[Wal03]{Waldspurger2003}
J-L. Waldspurger.
\newblock {La formule de Plancherel pour les groupes p-adiques, d'apr\`es
  Harish-Chandra}.
\newblock {\em Journal of the Institute of Mathematics of Jussieu},
  (2):235--333, 2003.


\end{thebibliography}
\end{document}